\theoremstyle{plain}
\newtheorem{corollary}{Corollary}[section]
\newtheorem{theorem}[corollary]{Theorem}
\newtheorem{lemma}[corollary]{Lemma}
\newtheorem{proposition}[corollary]{Proposition}
\newtheorem*{theorem*}{Theorem}
\newtheorem*{lemma*}{Lemma}
\theoremstyle{definition}
\newtheorem{algorithm}{Algorithm}
\newtheorem*{definition*}{Definition}
\newtheorem{Hypo}{Assumption}
\theoremstyle{remark}
\newtheorem*{remark}{Remark}
\newcommand{\R}{\mathbb{R}}
\newcommand{\proba}{\mathbb{P}}
\newcommand{\N}{\mathbb{N}}
\newcommand{\E}{\mathbb{E}}
\newcommand{\T}{\mathcal{T}}
\newcommand{\GT}{\mathfrak{G}}
\newcommand{\M}{\mathfrak{p}}
\newcommand{\D}{\mathcal{D}}
\newcommand{\V}{\mathcal{V}}
\newcommand{\Ee}{\mathcal{E}}
\newcommand{\Dn}{{\D_n}}
\newcommand{\Pn}{{\P_n}}
\newcommand{\Tn}{{\Theta_n}}
\newcommand{\K}{\mathbb{K}}
\newcommand{\B}{\mathcal{B}}
\newcommand{\G}{\mathcal{G}}
\newcommand{\1}{\mathbf{1}}
\renewcommand{\P}{\mathcal{P}}
\newcommand{\AAA}{\mathfrak{E}}
\newcommand{\n}{s}
\newcommand{\p}{{\mathfrak p}}
\newcommand{\e}{\varepsilon}
\newcommand{\s}{\sigma}
\newcommand{\ply}{\Rightarrow}
\newcommand{\OmegaD}{\Omega_{\D}}
\newcommand{\OmegaP}{\Omega_{\P}}
\newcommand{\OmegaT}{\Omega_{\Theta}}
\newcommand{\OmegaM}{\Omega_{\M}}
\DeclareMathOperator*{\limit}{\longrightarrow}
\DeclareMathOperator*{\SBB}{SB}
\DeclareMathOperator*{\CB}{CB}
\DeclareMathOperator*{\cyc}{cyc}
\newcommand*{\SB}{\text{SB}}
\newcommand*{\AB}{\text{AB}}
\newcommand*{\SBb}{\text{\textbf{yz}}}
\newcommand*{\GH}{\text{GH}}
\newcommand*{\GP}{\text{GP}}
\newcommand*{\W}{\text{W}}
\newcommand*{\WGP}{\text{WGP}}
\newcommand*{\WGH}{\text{WGH}}
\DeclareMathOperator*{\Id}{Id}
\DeclareMathOperator*{\CMmm}{CM}
\newcommand*{\CM}{{\CMmm}}
\DeclareMathOperator*{\MGmm}{MG}
\newcommand*{\MG}{{\MGmm}}
\DeclareMathOperator*{\cov}{cov}
\DeclareMathOperator*{\egale}{=}
\begin{document}
\title{Limit of connected multigraph with fixed degree sequence} 
\author{Arthur Blanc-Renaudie\thanks{LPSM, Sorbonne Universit\'e, France Email: arthur.blanc-renaudie@sorbonne-universite.fr.}}
\date{\today}
\maketitle
 \begin{abstract} Motivated by the scaling limits of the connected components of the configuration model, we study uniform connected multigraphs with fixed degree sequence $\D$ and with surplus $k$. We call those random graphs $(\D,k)$-graphs. We prove that, for every $k\in \N$, under natural conditions of convergence of the degree sequence, ($\D,k)$-graphs converge toward either $(\P,k)$-graphs or $(\Theta,k)$-ICRG (inhomogeneous continuum random graphs). We prove similar results for $(\P,k)$-graphs and $(\Theta,k)$-ICRG, which have applications to multiplicative graphs.  Our approach relies on two algorithms, the cycle-breaking algorithm, and the stick-breaking construction of $\D$-tree that we introduced in a recent paper. From those algorithms we deduce a biased construction of $(\D,k)$-graph, and we prove our results by studying this bias.
\end{abstract}
\section{Introduction} %
The present work is a continuation of our previous paper \cite{Uniform}, where we introduced a stick-breaking construction for $\D$-trees (uniform tree with fixed degree sequence $\D$) to prove that, under natural conditions, $\D$-trees converge, in a GP and a GHP sens, toward either $\P$-trees or ICRT. Here, we derive from \cite{Uniform} similar limits for graph versions of those trees, which have applications to multiplicative graphs and to the configuration model.
\subsection{Motivations} 
Computer scientists have introduced multiplicative graphs \cite{HistoMultA,HistoMultC,HistoMultD} and the configuration model \cite{HistoConfigA,HistoConfigB} as natural generalizations of the Erd\H os--R\'enyi model. They are studied for 2 main reasons: first many tools introduced for the Erd\H{o}s--R\'enyi model can also be used to study those graphs, then those models seems closer to real life network thanks to the "inhomogeneity in their degree distribution" (see e.g. Newman \cite{Newman}). For those reasons, they are currently great models to study the evolution of random networks. 

A natural question for any model of evolution is to study their potential phase transitions. It appears that those graphs have an intriguing phase transition where a giant component gets born.  We refer the reader to \cite{Dhara} Chapter 1 and references therein for an elaborate discussion of the nature of this transition, and an  overview of the literature it generated. 

From the point of view of precise asymptotics, a main goal is to study the geometry of the connected components of those graphs in the critical regime. To this end,  Addario-Berry, Broutin, and Goldschmidt \cite{ABG} have developed a general approach in the case of the Erd\H os--R\'enyi model. This approach is divided in two main steps:
\begin{compactitem}
\item[(a)]  First one encodes the random graphs into stochastic processes, and study those processes to deduce several limits for relevant quantities of the largest connected components such as the size, surplus, degrees. This has been noticed in the ground-breaking work of Aldous \cite{Aldous_exc_ER}.
\item[(b)] Then, one use those convergences to reduce the problem to a study of a single connected component conditioned on those quantities.
\end{compactitem}

This approach has been further developed for multiplicative graphs and the configuration model in many different regimes. We refer the reader to \cite{ABG,MST2,HomogeneousCase2} for the homogeneous case, \cite{StableConfig, HeavyConfig, StableMarchal} for the power law case, and \cite{P-graph-2,P-graph-1} for a unified approach for multiplicative graphs. 
In this paper, we focus on solving (b), under what we believe to be the weakest assumptions. So we reduce the study of the largest connected components to solving (a), which tends to be simpler. 

Moreover, we give a universal point of view on those models which can be summarized into the three following points: we describe multiplicative graphs as degenerate configuration model, we extend the unified point of view of Broutin, Duquesne, and Minmin \cite{P-graph-2,P-graph-1} to the configuration model, and we remove the omnipresent randomness assumption in the degree sequence.
\subsection{Overview of the proof} \label{1.2}
Fix $k\in \N$. Fix $\{V_i\}_{i\in \N}$ a  set of vertices. We say that a multigraph $G$ have degree sequence $\D=(d_1,\dots, d_\n)$ if $G$ has vertices $(V_1,\dots, V_\n)$ and for every $1\leq i \leq \n$, $V_i$ has degree $d_i+1$. (This shift of $+1$ will be convenient to simplify many expressions, and to be coherent with \cite{Uniform}.) The surplus of a connected multigraph $(V,E)$ is $|E|-|V|+1$, and is, informally, the number of edges that one needs to delete to transform a multigraph into a tree. A $(\D,k)$-graph is a uniform connected multigraph with degree sequence $\D$ and surplus $k$.

Our goal is to study the connected components of the configuration model conditioned on having degree sequence $\D$ and surplus $k$, which are close from $(\D,k)$-graphs (see Lemma \ref{Connections}).
To this end, we rely on two algorithms: the stick-breaking construction of $\D$-trees of \cite{Uniform}, along with the cycle-breaking algorithm introduced by Addario-Berry, Broutin, Goldschmidt, and Miermont \cite{MST} which we invert to construct $(\D,k)$-graph by adding $k$ edges to a biased $\D$-tree. 

We use the cycle breaking algorithm in the following form. Take a connected multigraph with surplus $k$, repeat $k$ times: choose an edge uniformly among all the edges that can be removed without disconnecting the graph, then cut this edge in the middle. 
By doing so, we add $2k$ named leaves $(\star_i)_{1\leq i \leq 2k}$, and keep the degrees of $(V_i)_{i\in \N}$.  Note that to invert this algorithm we can intuitively repair the broken edges by gluing the different pairs in $(\star_i)_{1\leq i \leq 2k}$. Note that however this algorithm is not a bijection, since for each multigraph there are many corresponding trees. To bypass this, we bias each tree by the probability that they were obtained by their corresponding multigraph. This way, we construct a $(\D,k)$-graph from a biased $\D$-tree with $k$ additional edges.  

Thus, to study the geometry of a $(\D,k)$-graph, it is enough to study jointly the geometry of a $\D$-tree, the positions of $(\star_i)_{1\leq i \leq 2k}$, and the previous bias which is a function of $(d(\star_i,\star_j))_{1\leq i,j \leq 2k}$. Therefore, it is enough to study precisely the distance matrix between specific vertices of a $\D$-tree. If the bias was a continuous function of this matrix, then our main results would directly follow  from \cite{Uniform} since the GP convergence of $\D$-trees implies the convergence of this matrix. However, some extra care is needed since the bias diverges when $(\star_i)_{1\leq i \leq 2k}$ are close. 

Therefore, we need to prove that $(\star_i)_{1\leq i \leq 2k}$ cannot be too close. More precisely we show, using the structure of $\D$-trees and of the bias, that it is enough to lower bound $(d(\star_0,\star_i))_{1\leq i \leq k}$ where $\star_0$ is a root leaf.  We then use our construction of $\D$-trees, also introduced independently by Addario-Berry, Donderwinkel, Maazoun, and Martin in \cite{Steal1},  to lower bound those distances using the $k$ first repetitions in a random tuple. 

%


Finally, since the bias is a function of the subtree spanned by $(\star_i)_{1\leq i \leq 2k}$, it is also a function of the first branches of the stick-breaking construction. This allows us to consider the limit of the bias, to directly construct  the limits of $(\D,k)$-graphs by biasing the $\P$-trees and ICRT, introduced by Aldous, Camarri and Pitman \cite{IntroICRT1,IntroICRT2}, and then by gluing the $k$ first pair of leaves. 
\paragraph{Plan of the paper:} In Section \ref{TOPOsection} we introduce the topologies that we are using in this paper. In Section \ref{2}, we construct $\D$-trees, $\P$-trees, and $\Theta$-ICRT. In Section \ref{ModGraph}, we construct $(\D,k)$-graphs, $(\P,k)$-graphs, and $(\Theta,k)$-ICRG. We state our main results in Section \ref{MAINsection}. We study the bias in section \ref{Bias}. We deduce our main results in Section \ref{May the proof section be removed?}. Finally we discuss in Section \ref{ALTEsection} some connections between $(\D,k)$-graph, $(\P,k)$-graphs, the configuration model and multiplicative graphs. 

\paragraph{Notations:} Throughout the paper, similar variables for, $\D$-trees, $(\D,k)$-graphs, $\P$-trees, $(\P,k)$-graphs, $\Theta$-ICRT, $(\Theta,k)$-ICRG share similar notations. To avoid any ambiguity, the models that we are using and their parameters are indicated by superscripts $\D$, $(\D,k)$, $\P$ ,$(\P,k)$, $\Theta$, $(\Theta,k)$. We often drop those superscripts when the context is clear.

\paragraph{Acknowledgment} Thanks are due to Nicolas Broutin for many advices on the configuration model and on multiplicative graphs.
%
\section{Notions of convergence} \label{TOPOsection}
\subsection{Gromov--Prokhorov (GP) topology} \label{GPdef}
A measured metric space is a triple $(X,d,\mu)$ such that $(X,d)$ is a Polish space and $\mu$ is a Borel probability measure on $X$. Two such spaces $(X,d,\mu)$, $(X',d',\mu')$ are called isometry-equivalent if there exists an isometry $f:X\to X'$ such that if $f_\star \mu$ is the image of $\mu$ by $f$ then $f_\star \mu=\mu'$. Let $\mathbb{K}_{\GP}$ be the set of isometry-equivalent classes of measured metric space. Given a measured metric space $(X,d,\mu)$, we write $[X,d,\mu]$ for the isometry-equivalence class of $(X,d,\mu)$ and frequently use the notation $X$ for either $(X,d,\mu)$ or $[X,d,\mu]$.


We now recall the definition of the Prokhorov's distance. Consider a metric space $(X,d)$. For every $A\subset X$ and $\e>0$ let $A^\e:= \{x\in X, d(x,A)<\e\}$. Then given two (Borel) probability measures $\mu$, $\nu$ on $X$, the Prokhorov distance between $\mu$ and $\nu$ is defined by 
\[d_P(\mu, \nu):= \inf\{\text{ $\e>0$: $\mu\{A\}\leq \nu \{A^\e\}$ and $\nu\{A\}\leq  \mu\{A^\e\}$, for all Borel set $A\subset X$} \}.\]

The  Gromov--Prokhorov (GP) distance is an extension of the Prokhorov's distance: For every $(X,d,\mu),(X',d',\mu')\in \K_{\GP}$ the Gromov--Prokhorov distance between $X$ and $X'$ is defined by
\[ d_{\GP}((X,d,\mu),(X',d',\mu')):=\inf_{S,\phi,\phi'} d_P(\phi_\star \mu, \phi'_\star\mu'),\]
where the infimum is taken over all metric spaces $S$ and isometric embeddings $\phi :X\to S$, $\phi' :X'\to S$. $d_{\GP}$ is indeed a distance on $\K_{\GP}$ and $(\K_{\GP},d_{\GP})$ is a Polish space (see e.g. \cite{GHP}).

We use another convenient characterization of the GP topology: For every measured metric space  $(X,d^X,\mu^X)$ let $(x_i^X)_{i\in \N}$ be a sequence of i.i.d. random variables of common distribution $\mu^X$ and let $M^X:=(d^X(x_i^X,x_j^X))_{(i,j)\in \N^2}$. We prove the following result in \cite{Uniform} (see also \cite{EquivGP}),

\begin{lemma} \label{equivGP2} Let $(X^n)_{n\in \N}\in \K_{\GP}^\N$ and let $X\in \K_{\GP}$. Let $(y^X_i)_{i\in \N}$ be a sequence of random variables on $X$ and let $N^X:= (d^X(y_i^X,y_j^X))_{(i,j)\in \N^2}$. If
\[ M^{X_n} \limit^{(d)} N^X \quad \text{and} \quad \frac{1}{n}\sum_{i=1}^n \delta_{y^X_i} \limit^{(d)} \mu^X, \]
then $X^n \limit^{\GP} X$.
\end{lemma}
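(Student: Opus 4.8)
\emph{Proof proposal.} The plan is to reduce the statement to the known characterization of Gromov--Prokhorov convergence through distance matrices, and to notice that the two hypotheses play complementary roles: the first forces the right symmetry of the limiting array, the second pins down the right sampling measure.

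First I would recall that, for $(X^n)_n$ and $X$ in $\K_{\GP}$, Gromov's reconstruction theorem together with the metric comparisons of \cite{GHP,EquivGP} give $X^n\limit^{\GP}X$ \emph{if and only if} $M^{X_n}\limit^{(d)}M^X$ in $\R_+^{\N^2}$ (product topology), where $M^X=(d^X(x^X_i,x^X_j))_{(i,j)\in\N^2}$ is the distance matrix of an i.i.d.\ $\mu^X$-sample. Applying this characterization and using the hypothesis $M^{X_n}\limit^{(d)}N^X$, the whole statement reduces to proving the identity in law $N^X\overset{(d)}{=}M^X$ on $\R_+^{\N^2}$.

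To establish this identity I would proceed in two steps. \textbf{(i)} Since $(x^{X_n}_i)_i$ is i.i.d., the law of $M^{X_n}$ is invariant under the (continuous) action of finite permutations of $\N$ on the index set; permutation-invariance passes to the weak limit, so the law of $N^X$ is permutation-invariant. In particular the law of $N^X|_{[m]}:=(d^X(y^X_a,y^X_b))_{a,b\le m}$ is $S_m$-invariant and equals that of $(d^X(y^X_{i_a},y^X_{i_b}))_{a,b\le m}$ for \emph{every} injection $a\mapsto i_a$ of $\{1,\dots,m\}$ into $\N$. \textbf{(ii)} Fix $m\in\N$ and a bounded continuous $F:\R_+^{m\times m}\to\R$. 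For each $n$ let $J_1,\dots,J_m$ be i.i.d.\ uniform on $\{1,\dots,n\}$, independent of $(y^X_i)_i$, and set $A_n:=(d^X(y^X_{J_a},y^X_{J_b}))_{a,b\le m}$. On the event $E_n$ that the $J_a$ are pairwise distinct, step (i) yields $\E[F(A_n)\1_{E_n}]=\proba(E_n)\,\E[F(N^X|_{[m]})]$ (reorder the chosen indices increasingly, then absorb the reordering using $S_m$-invariance), while $\proba(E_n)=1-O(m^2/n)\to1$. On the other hand, conditionally on $(y^X_i)_i$ the tuple $(y^X_{J_1},\dots,y^X_{J_m})$ is an i.i.d.\ sample from the empirical measure $\mu_n:=\frac1n\sum_{i=1}^n\delta_{y^X_i}$, so $\E[F(A_n)]=\E[h(\mu_n)]$ where $h(\nu):=\int F\big((d^X(u_a,u_b))_{a,b\le m}\big)\,\nu^{\otimes m}(\mathrm du)$ is bounded and weakly continuous in $\nu$. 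Since $\mu_n\limit^{(d)}\mu^X$ (the second hypothesis) and the limit is deterministic, $\E[h(\mu_n)]\to h(\mu^X)=\E[F(M^X|_{[m]})]$. Comparing the two expressions for $\E[F(A_n)]$ as $n\to\infty$ gives $\E[F(N^X|_{[m]})]=\E[F(M^X|_{[m]})]$; letting $m$ and $F$ vary identifies all finite-dimensional marginals, hence $N^X\overset{(d)}{=}M^X$, and the reduction above concludes.

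I expect the main obstacle to be a matter of care rather than depth: correctly invoking the (standard but non-elementary) equivalence between GP-convergence and weak convergence of distance-matrix laws, and making precise the conditional identity $\E[F(A_n)\1_{E_n}]=\proba(E_n)\,\E[F(N^X|_{[m]})]$ — that is, checking that conditioning on the (random, but $(y)$-independent) pattern of distinct values of $(J_1,\dots,J_m)$ and then using permutation-invariance really reproduces the law of $N^X|_{[m]}$. An alternative to step (ii) would be to argue that a permutation-invariant distance array whose empirical measure converges to the deterministic $\mu^X$ must be the distance array of an i.i.d.\ $\mu^X$-sample, via a de Finetti / directing-measure argument; the subsampling version above is preferable because it is self-contained, avoiding the need to lift exchangeability from the array to the sequence $(y^X_i)_i$, which need not itself be exchangeable.
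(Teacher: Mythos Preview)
The paper does not prove this lemma: immediately before the statement it writes ``We prove the following result in \cite{Uniform} (see also \cite{EquivGP})'', so there is no in-paper argument to compare against. Your proposal is correct and is the natural route --- reduce via the standard distance-matrix characterization of GP convergence (the content of \cite{EquivGP}) to showing $N^X\overset{(d)}{=}M^X$, then obtain permutation-invariance of the law of $N^X$ as a weak limit of i.i.d.\ distance matrices and combine it with the empirical-measure hypothesis through the uniform-subsampling trick; the points you flag as needing care (the conditional identity on $E_n$ via independence of $(J_a)$ from $(y^X_i)$, and weak continuity of $\nu\mapsto\int F\,d\nu^{\otimes m}$ on the space of Borel probabilities on a Polish space) are indeed routine.
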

\subsection{Gromov--Hausdorff (GH) topology} \label{GH}
Let $\K_{\GH}$ be the set of isometry-equivalent classes of compact metric space. For every metric space $(X,d)$, we write $[X,d]$ for the isometry-equivalent class of $(X,d)$, and frequently use the notation $X$ for either $(X,d)$ or $[X,d]$. 

 For every metric space $(X,d)$, the Hausdorff distance between $A,B\subset X$ is given by
\[d_H(A,B):= \inf\{\e>0, A\subset B^\e, B\subset A^\e \}. \]
The Gromov--Hausdorff distance between $(X,d)$,$(X',d')\in \K_{\GH}$ is given by 
\[ d_{\GH}((X,d),(X',d')):=\inf_{S,\phi,\phi'} \left (d_H(\phi(X), \phi'(X')) \right ),\]
where the infimum is taken over all metric spaces $S$ and isometric embeddings $\phi :X\to S$, $\phi' :X'\to S$. $d_{\GH}$ is indeed a distance on $\K_{\GH}$ and $(\K_{\GH},d_{\GH})$ is a Polish space (see e.g.  \cite{GHP}).

\subsection{Pointed Gromov--Hausdorff ($\GH^n$) topology} \label{PointedGH} Let $n\in \N$. Let $(X,d,(x_1,\dots, x_n))$ and $(X',d',(x'_1,\dots, x'_n))$ be metric spaces, each equiped with an ordered sequence of $n$ distinguished points (we call such spaces $n$-pointed metric spaces). We say that these two $n$-pointed metric spaces are isometric if there exists an isometry $\phi$ from $(X,d)$ on $(X',d')$ such that for every $1\leq i \leq n$, $\phi(x_i)=x'_i$. 
 
Let $\K^n_{\GH}$ be the set of isometry-equivalent classes of compact metric space. As before, we write  $[X,d,(x_1,x_2,\dots, x_n)]$ for the isometry-equivalent class of $(X,d,(x_1,\dots, x_n))$, and denote either by $X$ when there is little chance of ambiguity.
 
The $n$-pointed Gromov--Hausdorff distance between $X,X'\in \K^n_{\GH}$ is given by 
\[ d_{\GH}^n((X,d,(x_1,\dots, x_n)),(X',d',(x'_1,\dots, x'_n))):=\inf_{S,\phi,\phi'} \left (d_H(\phi(X), \phi'(X')) \right ),\]
where the infimum is taken over all metric spaces $S$ and isometric embeddings $\phi :X\to S$, $\phi' :X'\to S$ such that for every $1\leq i \leq n$, $\phi(x_i)=\phi'(x'_i)$. $d^n_{\GH}$ is indeed a distance on $\K^n_{\GH}$ and $(\K^n_{\GH},d^n_{\GH})$ is a Polish space (see \cite{MST} Section 2.1).
\subsection{Extension to pseudo metric spaces}
Note that the previous topologies naturally extends to pseudo metric spaces. Indeed, one may say that a pseudo metric space $(X,d)$ is isometry-equivalent to the metric space given by quotienting $X$ by the equivalent relation $d(a,b)=0$ (see Burago, Burago, Ivanov \cite{Glue} for details.) It is then enough to extend the equivalent classes to pseudo metric spaces. 
\section{Constructions of $\D$-trees, $\P$-trees and $\Theta$-ICRT} \label{2}
\subsection{$\D$-trees} \label{2.2} 
Recall that a sequence $(d_1,\dots, d_{\n})$ is a degree sequence of a tree if and only if $\sum_{i=1}^\n d_i=\n-2$, and by convention $d_1\geq d_2\dots \geq d_{\n}$. Let $\OmegaD$ be the set of such sequences. 

For convenience issue, we want to label our leaves on a set $\{\star_i\}_{i\in \N}$ disjoint from $\{V_i\}_{i\in \N}$. So let us slightly change our definition of $\D$-trees. Note that a tree with degree sequence $\D$ must have $N^\D+2:=\sum_{i=1}^\n \1_{d_i=0}$ leaves. We say that a tree $T$ is a $\D$-tree if it is uniform among all tree with vertices $\{V_i\}_{i:d_i>0}\cup \{\star_i\}_{0\leq i \leq N+1}$ and such that for every $i$ with $d_i>0$, $\deg(V_i)=d_i+1$.

We now recall the construction of $\D$-trees of \cite{Uniform}. For simplicity,  for every graph $G=(V,E)$ and edge $e=\{v_1,v_2\}$, $G\cup e$ denotes the graph $(V\cup \{v_1,v_2\},E\cup \{e\})$. 


\begin{algorithm}[Algorithm 7 from \cite{Uniform}] \label{D-tree} \emph{Stick-breaking construction of a $\D$-tree $T^\D$ (see Figure \ref{explore1}).}
\begin{compactitem}
\item[-] Let $A^\D=(A^\D_i)_{1\leq i \leq \n-1}$ be a uniform $\D$-tuple (tuple such that $\forall i\in \N$, $V_i$ appears $d_i$ times).
\item[-] 
Let $T^\D_1:=(\{\star_0,A_1\},\{\{\star_0,A_1\}\})$ then for every $2\leq i \leq \n$ let 
\[ T^\D_i:=\begin{cases} T_{i-1}\cup \{A_{i-1},A_{i}\} & \text{if } A_{i}\notin T_{i-1}.\\T_{i-1}\cup \{A_{i-1},\star_{\inf\{k, \star_k\notin T_{i-1}\}}\}& \text{if } A_{i} \in T_{i-1} \text{ or } i=\n.
\end{cases} \]
\item[-] Let $T^\D=T_\n$.
\end{compactitem}
\end{algorithm}
\begin{figure}[!h] \label{explore1}
\centering
\includegraphics[scale=0.5]{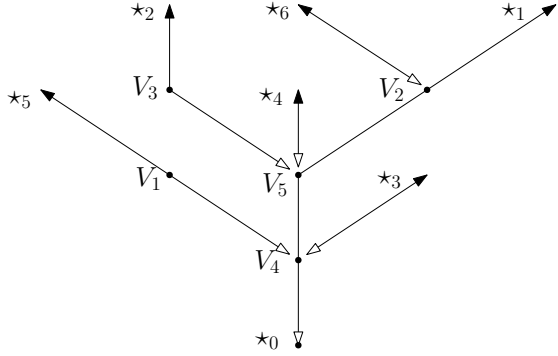}
\caption{Stick breaking construction of a $\D$-tree with $\D=(1,2,1,3,3,0,0,\dots)$ and $(A^\D_i)_{1\leq i \leq \n-1}=(V_4,V_5,V_2 ,V_5,V_3,V_4,V_5,V_4,V_1,V_2)$. The exploration starts at $\star_0$ then follows the white-black arrow toward $\star_1$, then jumps at $\star_5$ to follow the path toward $\star_2$ and so on\dots
} 
\label{explore1}
\end{figure}%
\subsection{$\P$-trees} \label{2.3}
Let $\{V_{\infty,i}\}_{i\in \N}$ be a set of vertices disjoint with $\{V_i\}_{i\in \N}$ and $\{\star_i\}_{i\geq 0}$.
 Let $\OmegaP$ be the set of sequence $(p_i)_{i\in \N\cup \{\infty\}}$ in $\R^+$ such that $\sum_{i=1}^\infty p_i+p_{\infty}=1$, $p_1>0$ and $p_1\geq p_2\geq \dots$. For every $\P\in \OmegaP$, the $\P$-tree is the random tree constructed as follow:
\begin{algorithm} \label{P-tree} \emph{Definition of the $\P$-tree for $\P\in \OmegaP$.}
\begin{compactitem}
\item[-] Let $(A^\P_i)_{i\in \N}$ be a family of i.i.d. random variables such that for all $i\in \N$, $\proba(A^\P_1=V_i)=p_i$.
\item[-] For every $i\in \N$, let $B^\P_i=A_i$ if $A_i\in \N$, and let $B^\P_i=V_{\infty,i}$ otherwise.
\item[-] 
Let $T^\P_1:=(\{\star_0,B_1\},\{\{\star_0,B_1\}\})$ then for every $i\geq 2$ let
\begin{equation*} T^\P_i:=\begin{cases} T_{i-1}\cup \{B_{i-1},B_{i}\} & \text{if } B_{i}\notin T_{i-1}. \\T_{i-1} \cup \{B_{i-1},\star_{\inf\{k, \star_k\notin T_{i-1}\}}\}& \text{if } B_{i} \in T_{i-1}. 
\end{cases}  \end{equation*} 
\item[-] Let $T^\P:=\bigcup_{n\in \N}T_n$.
\end{compactitem}
\end{algorithm}
\begin{remark} Usually, the leaves $\{\star_i\}_{i\in \N}$ are omitted in the formal definition of $\P$-trees. We consider them to clarify the intuition that they are degenerate $\D$-trees with an infinite number of leaves.
\end{remark}

\subsection{ICRT} \label{2.4}
\begin{figure}[!h]  \label{SB}
\centering
\includegraphics[scale=0.6]{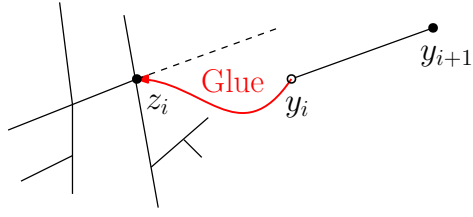}
\caption{A typical step of the stick-breaking construction: the "gluing" of $(y_i,y_{i+1}]$ at $z_i$. }
\label{SB}
\end{figure}
First let us introduce a generic stick breaking construction.  It takes for input two sequences in $\R^+$ called cuts ${\textbf y}=(y_i)_{i\in \N}$ and glue points ${\textbf z}=(z_i)_{i\in \N}$, which satisfy
\begin{equation} \forall i<j,\ \ y_i<y_j \qquad ; \qquad y_i\limit \infty \qquad ; \qquad \forall i\in \N,\ \ z_i\leq y_i, \label{2609} \end{equation}
and creates a $\R$-tree (loopless geodesic metric space) by recursively "gluing" segment $(y_i,y_{i+1}]$ on position $z_i$, 
  or rigorously, by constructing  a consistent sequence of distances $(d_n)_{n\in \N}$ on $([0,y_n])_{n\in \N}$.
\begin{algorithm} \label{Alg1} \emph{Generic stick-breaking construction of $\R$-tree.}
\begin{compactitem}
\item[--] Let $d_0$ be the trivial metric  on $[0,0]$.
\item[--] For each $i\geq 0$ define the metric $d_{i+1}$ on $[0, y_{i+1}]$ such that for each $x\leq y$: 
\[ d_{i+1}(x,y):=
\begin{cases} 
d_{i}(x,y) & \text{if } x,y\in [0, y_i] \\
d_{i}(x,z_i)+|y-y_i| & \text{if } x \in [0, y_i], \, y \in (y_i, y_{i+1}] \\
|x-y| &  \text{if } x,y\in (y_i, y_{i+1}],
\end{cases} \]
where by convention $y_0:=0$ and $z_0:=0$.
\item[--] Let $d$ be the unique metric on $\R^+$ which agrees with $d_i$ on $[0, y_i]$ for each $i\in \N$.
\item[--] Let $\SBB({\textbf y},{\textbf z})$ be the completion of $(\R^+,d)$.
\end{compactitem}
\end{algorithm}

Now, let $\OmegaT$ be the space of sequences $(\theta_i)_{i\in \{0\}\cup \N}$ in $\R^+$ such that $\sum_{i=0}^\infty \theta_i^2=1$ and such that $\theta_1\geq \theta_2\geq \dots$. For every $\Theta\in \OmegaT$, the $\Theta$-ICRT is the random $\R$-tree constructed as follow:
\begin{algorithm} \label{ICRT} \emph{Construction of $\Theta$-ICRT (from \cite{ICRT1})}
\begin{compactitem}
\item[-] Let $(X_i)_{i\in \N}$ be a family of independent exponential random variables of parameter $(\theta_i)_{i\in \N}$.
\item[-] Let $\mu$ be the measure on $\R^+$ defined by $\mu=\theta_0^2 dx+\sum_{i=1}^{\infty} \delta_{X_i} \theta_i$. 
\item[-] Let $(Y_i,Z_i)_{i\in \N}$ be a Poisson point process  on $\{(y,z)\in \R^{+2}: y\geq z \}$ of intensity $dy\times d\mu$.
\item[-] Let $\textbf{Y}:=(Y_i)_{i\in \N}$ and let $\textbf{Z}:=(Z_i)_{i\in \N}$. Let $(Y_0,Z_0):=(0,0)$.
\item[-] The $\Theta$-ICRT is defined as $(T,d)=\SBB(\textbf Y,\textbf Z)$. (see Algorithm \ref{Alg1})
\end{compactitem}
\end{algorithm}
\section{Constructions of $(\D,k)$-graphs, $(\P,k)$-graphs and $(\Theta,k)$-ICRG} \label{ModGraph}
\subsection{Generic gluing and cycle-breaking of discrete multigraphs (see Figure \ref{explore2})} \label{DefGraph} %
In the entire section, $G=(V,E)$ denotes a multigraph. 
Let $\cyc(G)$ the set of all edges $e\in E$ such that $G\backslash \{e\}:=(V,E\backslash\{e\})$ is connected. (For multiples edges the operation $\backslash$ only remove one edge at a time.)  Let $\square(G):=\#\cyc(G)$. 

For every leaves $L_1\neq L_2\in G$, we define the operation of gluing $L_1$ and $L_2$ in $G$ as follow: For every leaf $L\in G$, let the father of $L$ be the only vertex $F\in G$ such that $(F,L)\in G$. Let $F_1$, $F_2$ be the father of $L_1,L_2$. The multigraph obtained by gluing $L_1$ and $L_2$ in $G$ is 
\[ \G_{L_1,L_2}(G):=(V/\{L_1,L_2\}, E\backslash \{ \{F_1,L_1\},\{F_2,L_2\} \}\cup \{ \{F_1,F_2\}\}), \]
and intuitively corresponds to the graph obtained by fusing $\{F_1,L_1\}$ and $\{F_2,L_2\}$.

Similarly, for every leaves $L_1\neq L_2 \neq \dots, L_{2k-1} \neq L_{2k}$, the multigraph obtained by gluing $L_1$ and $L_2$,\dots, $L_{2k-1}$ and $L_{2k}$ in $G$ is
\[ \G_{(L_i)_{1\leq i \leq 2k}}(G)=\G_{(L_1,L_2),(L_3,L_4)\dots, (L_{2k-1},L_{2k})}(G):= \G_{L_{1},L_{2}}\circ \G_{L_3,L_4} \circ \dots \circ \G_{L_{2k-1},L_{2k}}(G). \] Note that this multigraph does not depend on the order in which we glue the different leaves.
\begin{figure}[!h] \label{explore2}
\centering
\includegraphics[scale=0.55]{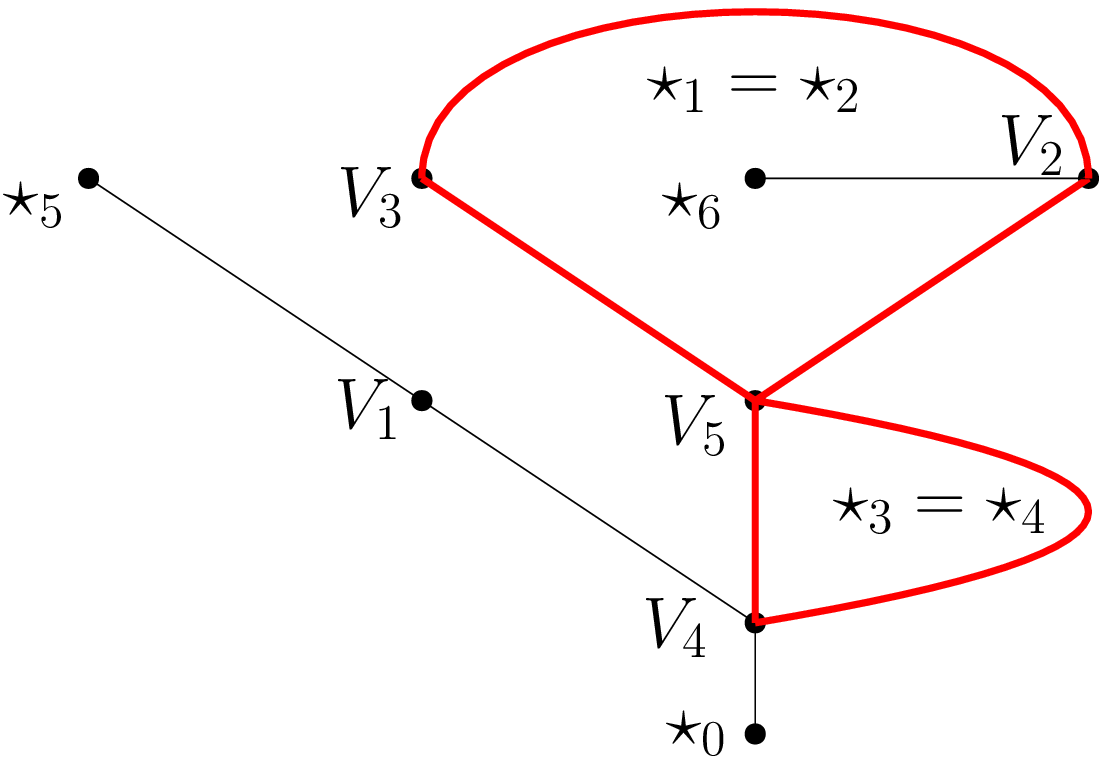}
\caption{Gluing leaves of the tree  $T$ from Figure \ref{explore1} to form a graph $G$ with surplus 2. $\cyc(G)$ is red. $\square(G)=5$.  $G=\G_{(\star_1,\star_2,\star_3,\star_4)}(T)$. Also, $\proba(\CB(G)=T)=\frac{2!}{2^2\square(G\backslash\{V_4,V_5\})\square(G)}=\frac{2}{2^2*3*5}$.} 
\label{explore2}
\end{figure}%

Now recall Section \ref{1.2}. Let us give a formal definition of the cycle-breaking algorithm:
\begin{algorithm} \label{CBalg} Cycle-breaking of a multigraph $G=(V,E)$ with $V\subset (V_i)_{i\in \N}$ and surplus $k$.
\begin{compactitem} 
\item[-] For $1\leq i \leq k$, let $e_i=(W_{2i+1},W_{2i+2})$ be a uniform oriented edge in $\cyc(G\backslash \{e_j\}_{1\leq j<i})$.
\item[-] Let $\CB(G):= (V\cup\{\star_i\}_{1 \leq i \leq k},(E\backslash \{e_i\}_{1\leq i \leq k})\cup \{\{W_i, \star_{2k+1-i}\}\}_{1\leq i \leq 2k})$.
\end{compactitem}
\end{algorithm}
To simplify our notations for every multigraph $G=(V,E)$ and $v,w\in V$, we write $\#_{v,w}(G)$ for the number of edges $\{v,w\}$ in $G$. Also, let $\circ(G):=\prod_{v\in V}2^{\#_{v,v}(G)}\prod_{v,w\in V}\#_{v,w}(G)!$.
\begin{lemma} \label{CBapp} For every connected multigraph $G$ with $V\subset \{V_i\}_{i\in \N}$ and surplus $k$, we have:
\begin{compactitem}
\item[(a)] $\CB(G)$ is almost surely a tree with vertices $V\cup \{\star_i\}_{1\leq i \leq 2k}$.
\item[(b)] For every $v\in V$, $\deg_{\CB(G)}(v)=\deg_G(v)$. For every $1\leq i \leq 2k$, $\star_i$ is a leaf in $\CB(G)$.
\item[(c)] Almost surely, $\G_{(\star_1, \star_2),\dots, (\star_{2k-1},\star_{2k})}(\CB(G))=G$.
\item[(d)] For every tree $T$ satisfying (a) (b),
\begin{equation} \proba(\CB(G)=T)= \frac{\circ(G)}{2^k\prod_{i=1}^{k} \square(G\backslash \{e_j\}_{1\leq j<i})}. \label{1910} \end{equation}
\end{compactitem}
\end{lemma}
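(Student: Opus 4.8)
The four parts are closely linked, so the plan is to establish them essentially simultaneously by unwinding Algorithm \ref{CBalg} one step at a time and tracking what each cut does. For (a) and (b): each step of the algorithm removes an edge $e_i$ from $\cyc(G\backslash\{e_j\}_{j<i})$, which by definition of $\cyc$ keeps the graph connected, and then reattaches the two severed half-edges to two fresh leaves $\star_{2k+1-2i}$ and $\star_{2k-2i}$. Removing an edge from a connected multigraph with surplus $m\geq 1$ that lies in $\cyc$ produces a connected multigraph with surplus $m-1$ (the surplus drops by exactly one since we delete one edge without disconnecting, hence $|E|$ drops by one while $|V|$ and the number of components are unchanged); attaching the two half-edges to two new leaves then adds two vertices and two edges, leaving the surplus unchanged. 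So after all $k$ steps the surplus is $0$ and the graph is connected, i.e.\ a tree, with vertex set $V\cup\{\star_i\}_{1\leq i\leq 2k}$; the new $\star_i$ have degree $1$, and no operation ever changes $\deg(v)$ for $v\in V$ since we only move the endpoint of a removed edge from $v$ to a new leaf and back — wait, we must be careful: cutting $e_i=(W_{2i+1},W_{2i+2})$ removes one from each of $\deg(W_{2i+1})$ and $\deg(W_{2i+2})$, but then $\{W_i,\star_{2k+1-i}\}$ adds one back to each endpoint, so degrees in $V$ are preserved. This gives (a), (b).

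For (c): gluing $\star_{2i-1}$ and $\star_{2i}$ via $\G$ replaces the two pendant edges $\{F_1,\star_{2i-1}\}$, $\{F_2,\star_{2i}\}$ by the single edge $\{F_1,F_2\}$; since the fathers of $\star_{2k+1-i}$ and $\star_{2k-i}$ (for the pair associated to step $i$) are exactly $W_{2i+1}$ and $W_{2i+2}$, this re-creates precisely the edge $e_i=(W_{2i+1},W_{2i+2})$ that was removed. Because the order of gluings does not matter (stated after the definition of $\G_{(L_i)}$), performing all $k$ gluings on $\CB(G)$ reconstructs all the removed edges and no others, yielding $G$ back. A small point: one should check the pairing of indices $(\star_{2k+1-i})$ versus the pairs $(\star_1,\star_2),\dots$ lines up so that the half-edges glued together are the two halves of the same original edge; this is just bookkeeping with the index convention in Algorithm \ref{CBalg}.

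For (d), the main computational part: fix a target tree $T$ satisfying (a),(b). We compute the probability that the sequence of uniform oriented edge choices produces $T$. At step $i$, conditionally on the first $i-1$ cuts having produced the intermediate graph $G_i:=G\backslash\{e_j\}_{j<i}$, the edge $e_i$ is chosen uniformly among the $2\,\square(G_i)$ oriented edges in $\cyc(G_i)$. Now, for the outcome to be $T$, the multiset of $k$ removed edges and the attachment of half-edges to $\{\star_j\}$ is essentially forced by $T$ (the $\star_j$ pinpoint which edges were cut and which endpoints they had), but there is over-counting coming from: the order in which the $k$ edges are cut — however, that order is forced by the labelling of the $\star_j$ via the index $2k+1-i$, so in fact once $T$ is fixed the \emph{set} of $k$ intermediate graphs $G_1\supset G_2\supset\cdots$ is determined; the two orientations of each cut edge, giving a factor $2^k$ in the numerator; and, when an original edge of $G$ is a loop at $v$ or one of several parallel edges between $v,w$, the choice of \emph{which} concrete copy of that edge in the multigraph $G_i$ is the one labelled $e_i$, which is exactly counted by $\prod_v 2^{\#_{v,v}(G)}\prod_{v,w}\#_{v,w}(G)!=\circ(G)$ (the $2$ for loops because a loop at $v$ corresponds, after cutting, to two leaves both with father $v$, and there are two ways to match them to the ordered pair $(\star_{2k+1-i_{\mathrm{odd}}},\star_{2k-i})$). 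Collecting: the number of favorable oriented-edge-sequences is $\circ(G)\cdot 2^k$, and each occurs with probability $\prod_{i=1}^k \big(2\,\square(G_i)\big)^{-1}$, giving
\[ \proba(\CB(G)=T)=\frac{\circ(G)\,2^k}{\prod_{i=1}^k 2\,\square(G\backslash\{e_j\}_{j<i})}=\frac{\circ(G)}{2^k\prod_{i=1}^k \square(G\backslash\{e_j\}_{j<i})}, \]
matching \eqref{1910}; note that by the order-forcing observation the graphs $G\backslash\{e_j\}_{j<i}$ appearing in the product depend only on $G$ and $T$, so the formula is well-defined.

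\textbf{Main obstacle.} The delicate step is (d), specifically the correct accounting of the combinatorial multiplicities for loops and parallel edges — i.e.\ arguing cleanly that the over-counting factor is exactly $\circ(G)2^k$ and not something subtly different, and that the sequence of intermediate graphs in the denominator is genuinely a function of $(G,T)$ rather than of the random choices. I would handle this by setting up an explicit bijection between (oriented-edge-sequences producing $T$) and (choices of: for each step, one of the $\#$ parallel copies / one of $2$ loop-orientations, times $2$ for the cut orientation), and checking the cardinalities match, perhaps working the running example of Figure \ref{explore2} to confirm the normalization.
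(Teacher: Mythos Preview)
Your treatments of (a), (b), (c) are fine and match the paper's terse argument. The problem is in (d), where two errors happen to cancel.

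First, the factor $2^k$ you introduce for ``the two orientations of each cut edge'' is spurious. For a fixed target tree $T$, the orientation of each cut is \emph{forced}: at step $i$ the oriented edge is $(W_{2i+1},W_{2i+2})$, and $W_{2i+1}$ must equal the father in $T$ of one specific $\star$ while $W_{2i+2}$ must equal the father of another specific $\star$. If these fathers are distinct vertices $u\neq v$, only one of the two orientations $(u,v),(v,u)$ yields $T$. The only case in which both orientations are favorable is a loop (both $\star$'s share father $v$), but you have already absorbed that freedom into the $2^{\#_{v,v}(G)}$ factor of $\circ(G)$ --- indeed your own parenthetical explains exactly this. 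So the number of favorable oriented-edge sequences is $\circ(G)$, not $\circ(G)\cdot 2^k$.

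Second, your displayed simplification is an arithmetic slip:
\[
\frac{\circ(G)\,2^k}{\prod_{i=1}^k 2\,\square(G_i)}=\frac{\circ(G)\,2^k}{2^k\prod_{i=1}^k \square(G_i)}=\frac{\circ(G)}{\prod_{i=1}^k \square(G_i)},
\]
which is $2^k$ too large, not $\circ(G)/\bigl(2^k\prod_i\square(G_i)\bigr)$. The two mistakes cancel, but the argument as written is not sound.

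The clean version (which is what the paper's one-line induction encodes) is to compute the step-$i$ conditional probability directly as $(\text{\# favorable oriented edges in }\cyc(G_i))/(2\square(G_i))$ and observe that the product of the numerators over all $k$ steps is exactly $\circ(G)$: for a non-loop pair $\{u,v\}$ with $m=\#_{u,v}(G)$ parallel edges of which $r$ are cut, the relevant steps contribute $m(m-1)\cdots(m-r+1)$, and since $T$ is a tree $m-r\in\{0,1\}$, so this equals $m!$; for $\ell=\#_{v,v}(G)$ loops at $v$ (all necessarily cut) the contribution is $2^\ell\ell!$. Your own suggestion to test against Figure~\ref{explore2} would have flagged the issue: there $\circ(G)=2$, $k=2$, and your pre-simplification count gives $8/60$ rather than the correct $2/60$.
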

\begin{proof} (a) and (b) follows from a quick enumeration. (c) is easy to prove from the definition of $\G$.
 (d) follows from an induction. Indeed, the right hand side of \eqref{1910} is just the product over each steps of the probability that $(W_{2i+1},W_{2i+2})$ satisfies   $\{ W_{2i+1},\star_{2k-2i}\},\{W_{2i+2},\star_{2k-2i-1}\}\in T$.
\end{proof}
 
 \subsection{$(\D,k)$-graph} \label{DkDef}
Note that $(d_1,\dots, d_{\n})$ is a degree sequence of a connected multigraph with surplus $k$ if and only if $\sum_{i=1}^\n d_i=\n+k-2$, and by convention $d_1\geq d_2\dots \geq d_{\n}$. Note that by adding $2k$ numbers $0$, this holds if and only if $(d_1,\dots, d_{\n},0,\dots, 0)\in \OmegaD$. 

For convenience issue, let us slightly extend our definition of $(\D,k)$-graph. For $\D\in \OmegaD$ with $N^\D\geq 2k$ we say that $G$ is a $(\D,k)$-graph if it is uniform among all multigraph with vertices $\{V_i\}_{i:d_i>0}\cup \{\star_i\}_{i\in\{0\}\cup  \{2k+1,\dots, N^\D+1\}}$ and such that for every $i$ with $d_i>0$, $\deg(V_i)=d_i+1$. The following result follows from Lemma \ref{CBapp} and constructs a $(\D,k)$-graph from a biased $\D$-tree.

 \begin{lemma} \label{constructDK}Let $T^{\D,k}$ be a random tree. Assume that for every tree $T$ such that: T have vertices $\{V_i\}_{1\leq i\leq \n}\cup \{\star_i\}_{1\leq i \leq 2k}$, for every $1\leq i \leq \n$ $\deg_T(V_i)=d_i+1$, and $\{\star_i\}_{1\leq i \leq 2k}$ are leaves of $T$,
 \begin{equation} \proba(T^{\D,k}=T)\alpha  \frac{\circ(\G_{(\star_i)_{1\leq i \leq 2k}}(T))}{\prod_{i=1}^{k} \square(\G_{(\star_{1},\star_{2}),\dots, (\star_{2i-1},\star_{2i})}(T))}, \label{19102}\end{equation}
where $\alpha$ stands for proportional. 
Then $\G_{(\star_i)_{1\leq i \leq 2k}}(T^{\D,k})$ is a $(\D,k)$-graph.
 \end{lemma}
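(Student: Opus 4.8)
The statement is essentially an inversion of the cycle-breaking lemma, so the plan is to build a random multigraph $G^{\D,k}$ from $T^{\D,k}$ by gluing, and check that it is uniform over the relevant set. First I would fix the target set $\mathcal{G}$ of multigraphs: those with vertices $\{V_i\}_{i:d_i>0}\cup\{\star_i\}_{i\in\{0\}\cup\{2k+1,\dots,N^\D+1\}}$ and $\deg(V_i)=d_i+1$ for $d_i>0$, which by the degree-count remark is exactly the set of connected multigraphs of surplus $k$ with degree sequence $\D$ (after discarding the surplus-many extra leaves, which play no role). Symmetrically, let $\mathcal{T}$ be the set of trees $T$ with the $2k$ named leaves $\{\star_i\}_{1\leq i\leq 2k}$ added, $\deg_T(V_i)=d_i+1$, and the $\star_i$ leaves, as in the hypothesis. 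The map $\Phi:=\G_{(\star_1,\star_2),\dots,(\star_{2k-1},\star_{2k})}$ sends $\mathcal{T}$ into $\mathcal{G}$ (this uses Lemma~\ref{CBapp}(a)-(c): $\Phi(T)$ has the right degrees and is connected with surplus $k$), and for each $G\in\mathcal{G}$, Lemma~\ref{CBapp}(a)-(c) show that $\CB(G)$ is supported on $\Phi^{-1}(G)\subset\mathcal{T}$.

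The core computation is then a change-of-variables / fiber sum. For a fixed $G\in\mathcal{G}$, by Lemma~\ref{CBapp}(d) every $T\in\Phi^{-1}(G)$ satisfies
\[ \proba(\CB(G)=T)=\frac{\circ(G)}{2^k\prod_{i=1}^{k}\square(G\backslash\{e_j\}_{1\leq j<i})}, \]
and — this is the key point — the right-hand side does \emph{not} depend on $T$ within the fiber, only on $G$; moreover by construction of $\CB$, the sequence of intermediate multigraphs $G\backslash\{e_j\}_{1\leq j<i}$ appearing there is exactly $\G_{(\star_1,\star_2),\dots,(\star_{2i-1},\star_{2i})}(T)$ for the corresponding $T$ (that is precisely what the parenthetical in the proof of Lemma~\ref{CBapp}(d) says), and $\circ(G)=\circ(\G_{(\star_i)_{1\leq i\leq 2k}}(T))$. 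Hence the weight in \eqref{19102} is, up to the normalising constant, exactly $\proba(\CB(G)=T)$ with $G=\Phi(T)$. So I would write $\proba(T^{\D,k}=T)=c\cdot\proba(\CB(\Phi(T))=T)$ for the single constant $c=1/Z$ with $Z=\sum_{T\in\mathcal{T}}\proba(\CB(\Phi(T))=T)=\sum_{G\in\mathcal{G}}\sum_{T\in\Phi^{-1}(G)}\proba(\CB(G)=T)=\sum_{G\in\mathcal{G}}1=\#\mathcal{G}$, since $\CB(G)$ is a genuine probability distribution supported on $\Phi^{-1}(G)$. Therefore for any $G\in\mathcal{G}$,
\[ \proba(\Phi(T^{\D,k})=G)=\sum_{T\in\Phi^{-1}(G)}\proba(T^{\D,k}=T)=c\sum_{T\in\Phi^{-1}(G)}\proba(\CB(G)=T)=c, \]
which is independent of $G$; this is uniformity on $\mathcal{G}$, i.e. $\Phi(T^{\D,k})$ is a $(\D,k)$-graph.

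\textbf{Main obstacle.} The routine parts (degree bookkeeping, that $\Phi$ respects degrees and surplus) follow from Lemma~\ref{CBapp}(a)-(c). The delicate point is the identification between the denominator $\prod_{i=1}^k\square(\G_{(\star_1,\star_2),\dots,(\star_{2i-1},\star_{2i})}(T))$ appearing in \eqref{19102} and the denominator $\prod_{i=1}^k\square(G\backslash\{e_j\}_{1\leq j<i})$ appearing in \eqref{1910}: one must check that when $\CB(G)$ outputs a particular $T$, the edge $e_i$ it removed at step $i$ is exactly the edge of $G\backslash\{e_j\}_{1\leq j<i}$ that gets re-created by gluing $\star_{2i-1}$ and $\star_{2i}$ in $T$, so that the intermediate multigraph $G\backslash\{e_j\}_{1\leq j<i}$ coincides with $\G_{(\star_1,\star_2),\dots,(\star_{2i-1},\star_{2i})}(T)$ — and also that $\square$ is insensitive to the labelling/ordering ambiguity in which pair of $\star$'s corresponds to which removed edge. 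This is precisely the content already extracted in the proof of Lemma~\ref{CBapp}(d), so I would invoke it; the only thing to be careful about is the indexing convention ($\star_{2k+1-i}$ attached to $W_i$) matching the pairing $(\star_1,\star_2),\dots,(\star_{2k-1},\star_{2k})$ used in $\G$, which is a finite check. Once that identification is in hand, the proof is the one-line fiber-sum argument above.
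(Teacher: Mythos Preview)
Your approach is correct and is exactly the argument the paper has in mind when it says the lemma ``follows from Lemma~\ref{CBapp}''. One small slip: the clause ``the right-hand side does \emph{not} depend on $T$ within the fiber, only on $G$'' is literally false (different $T\in\Phi^{-1}(G)$ correspond to different removal orders, hence different intermediate graphs and different values of $\proba(\CB(G)=T)$), but you immediately override it with the correct identification of $G\backslash\{e_j\}_{j<i}$ with the partial gluings of $T$, and that is what your fiber-sum actually uses; just note that the labelling convention in Algorithm~\ref{CBalg} reverses the order (the last edge removed gets $\star_1,\star_2$), so the two products over $i=1,\dots,k$ match only as multisets of factors, and the dangling leaves $\star_{2i+1},\dots,\star_{2k}$ still present after $i$ gluings do not affect $\square$.
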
 
 To simplify our notations, we write for every $i\in \N$, $\square_i(\cdot):=\square(\G_{(\star_{1},\star_{2}),\dots, (\star_{2i-1},\star_{2i})}(\cdot))$ and $\square_{\square,k}(\cdot):= \circ(\G_{(\star_i)_{1\leq i \leq 2k}}(T))/\prod_{i=1}^{k} \square_i(\cdot)$. So that the right hand side of \eqref{19102} is $\square_{\square,k}(T)$. 
\subsection{$(\P,k)$-graph}  \label{PkDef}
Since $\P$-trees appear at the limit of $\D$-trees, it is natural to adapt Lemma \ref{constructDK} to construct limits for $(\D,k)$-graphs from $\P$-trees. Thus we informally define the $(\P,k)$-graph as a $\P$-tree biased by \eqref{1910} where we glued $\{\star_{2i-1},\star_{2i}\}_{1\leq i \leq k}$. Below we formally define $(\P,k)$-graph.

Fix $\P\in \OmegaP$. First note that Algorithm \ref{P-tree} can be seen as a function $\AB$ (Aldous--Br\"oder) which takes a tuple $A^\P$ in $\Omega_\AB:=(\{V_i\}_{i\in \N\cup\{\infty\}})^\N$ and send a tree $T^\P$. We equip $\Omega_\AB$ with the weak topology and let $\B_\AB$ be the Borel algebra of this space. Also, we equip $\Omega_\AB$ with the distribution $\proba^\P$ of $(A_i^\P)_{i\in \N}$, and complete the space so that event of measure null for $\proba^\P$ are measurable.

Then note that $\square_{\square,k}(\AB)$ is a measurable function from $\Omega_\AB$ to $\R^+$ since it is locally constant on the subspace of tuple that have at least $2k$ repetitions. Also, note that $\square_{\square,k}(\AB)\leq (k+1)!2^k$. Thus we may define $\proba^{\P,k}$ on $(\Omega^\AB,\B^\AB)$ such that for every Borel space $B\in \B_\AB$,
\[ \proba^{\P,k}(B)=\E[\1_{A^\P\in B}\square_{\square,k}(\AB(A^\P))]/\E[\square_{\square,k}(\AB(A^\P))]. \]

Now let $A^{\P,k}$ be a random variable with distribution $\proba^{\P,k}$. Then let $T^{\P,k}:=\AB(A^{\P,k})$. The $(\P,k)$-graph is the random graph $G^{\P,k}:=\G_{(\star_i)_{1\leq i \leq 2k}}(T^{\D,k})\backslash \{\star_i\}_{i\in \N}$.
\subsection{$(\Theta,k)$-ICRG} \label{4.4} \label{TkDef}
Since $\Theta$-ICRT appear  as the limit of $\D$-trees it is natural to adapt Lemma \ref{constructDK} to construct limits for $(\D,k)$-graphs from $\Theta$-ICRT. Thus we informally define $(\Theta,k)$-ICRG as $\Theta$-ICRT biased by \eqref{1910} where we glued $\{\star_{2i-1},\star_{2i}\}_{1\leq i \leq k}$. Below we formally define $(\Theta,k)$-ICRG. We stay rudimentary and refer to Chapter 3 of \cite{Glue} or to the $\R$-graph theory of \cite{MST} for more details. 

First we formally define the gluing of two points: For every pseudo metric space $(X,d)$ and $x_1,x_2\in X$ let $\GT_{x,y}((X,d))$ be the pseudo metric space $(X,d')$ where for every $a_1,a_2\in X$,
\[ d'(a_1,a_2):=\inf \{d(a_1,a_2)\,; \,d(a_1,x_1)+d(a_2,x_2) \, ;\,  d(a_1,x_2)+d(a_2,x_1)\}. \]
Also for every $k\in \N$ and $x_1, x_2, \dots, x_{2k}\in X$ let 
\[ \GT_{(x_i)_{1\leq i \leq 2k}}((X,d))=\GT_{(x_1,x_2),\dots, (x_{2k-1},x_{2k})}((X,d)):=\GT_{x_1,x_2}\circ \GT_{x_3,x_4}\circ \dots \circ  \GT_{x_{2k-1},x_{2k}}((X,d)). \]
One can check that $\GT_{(x_i)_{1\leq i \leq 2k}}((X,d))$ does not depends on the order in $(x_{2i-1},x_{2i})_{1\leq i \leq k}$.

Recall Section \ref{2.4}. Let $\K_{\SBb}$ be the set of couples of sequences $\textbf y$ and $\textbf z$ satisfying \eqref{2609}.
In Section \ref{2.4} we defined the stick breaking construction as a function $\SB:(\textbf y, \textbf z)\in \K_{\SBb}\to\SB(\textbf y, \textbf z)$. 

For every $n\in \N$ and $(\textbf y,\textbf z)=((y_i)_{i\in \N}, (z_i)_{i\in \N})\in \K_{\SBb}$ let $\cyc_n(\textbf y, \textbf z)$ be the set of $x\in \R$ such that $\GT_{(y_i)_{1\leq i \leq 2n}}(\SB(\textbf y,\textbf z))\backslash\{x\}$ is connected. Note that $\cyc_n(\textbf y, \textbf z)$ is a finite union of interval so is measurable. Let $\square_n(\textbf y, \textbf z)$ be its Lebesgue measure. Note that $\square_n(\textbf y, \textbf z)$ only depends on $\{y_i\}_{1\leq i \leq n},\{z_i\}_{1\leq i \leq n}$, and is a measurable function of $(\{y_i\}_{1\leq i \leq n},\{z_i\}_{1\leq i \leq n} )$ (see Lemma \ref{CHIANTa}). 
Let $\square_{\square,k}(\textbf y, \textbf z):=1/\prod_{n=1}^k \square_n(\textbf y, \textbf z)$.

Let $\mathbb{M}$ be the set of all positive locally finite measure on $\R^+$. Let $\K_{\SB}:= \mathbb{M}\times \K_{\SBb}$. We equip $\K_{\SB}$ with the weak topology and let $\B_\SB$ be the Borel algebra of this space.
Let $\Theta\in \OmegaT$. We will prove in Lemma \ref{Followers} that $\E[\square_{\square,k}(\textbf Y^\Theta, \textbf Z^\Theta)]<\infty$. Thus we may define $ \proba^{\Theta,k}$ on $(\K_{\SB},\B_\SB)$ such that for every Borel space $B\in \B_\SB$,
\[ \proba^{\Theta,k}(B)=\E \left [\1_{A^\P\in B}\square_{\square,k} \left (\SB\left ( \textbf Y^\Theta, \textbf Z^\Theta \right ) \right )\right ]/\E[\square_{\square,k}(\textbf Y^\Theta, \textbf Z^\Theta)]. \]

Now let $(\mu^{\Theta,k}, \textbf Y^{\Theta,k} , \textbf Z^{\Theta,k})$ be a random variable with distribution $\proba^{\Theta,k}$. Let $\textbf Y^{\Theta,k}=(Y_i^{\Theta,k})_{i\in \N}$. Then let $(T^{\Theta,k},\bar d^{\Theta,k}):=\SB( \textbf Y^{\Theta,k} , \textbf Z^{\Theta,k})$. The $(\Theta,k)$-ICRG is the random pseudo metric space $(G^{\Theta,k},d^{\Theta,k}):=\GT_{(Y_i^{\Theta,k})_{1\leq i \leq 2k}}(T^{\Theta,k},\bar d^{\Theta,k})$.

\section{Main results} \label{MAINsection}
 In this section $(\Dn)_{n\in \N}$, $(\Pn)_{n\in \N}$, $(\Tn)_{n\in \N}$ denote fixed sequences in $\OmegaD$, $\OmegaP$, $\OmegaT$ respectively. 
 For every $\D=(d_1,\dots, d_{\n^\D})\in \OmegaD$, let $(\sigma^\D)^2:=\sum_{i=1}^\n d_i (d_i-1)$ then let $\lambda^\D:=\sigma^\D/\n^\D$. Also,
for every $\P=(p_i)_{i\in \N\cup \{\infty\}}$ let $\n^\P:=\max\{i\in \N\cup \{\infty\}: p_i>0\}$ and let $(\sigma^\P)^2=\sum_{i=1}^{\infty} (p_i)^2$. 
We always work under one of the following regimes:

\begin{Hypo}[$\Dn \ply \P$] \label{Hypo1} For all $i\geq 1$, $d_i^\Dn/\n^\Dn\to p^\P_i$ and $\n^\Dn\to \infty$. 
\end{Hypo}
\begin{Hypo}[$\Dn \ply \Theta$]\label{Hypo2}
  \label{Hypo2} For all $i\geq 1$, $d_i^\Dn/\s^\Dn\to \theta^\Theta_i$ and $d_1^\Dn/\n^\Dn\to 0$.
\end{Hypo} 
\begin{Hypo}[$\Pn \ply \Theta$]\label{Hypo2P} For all $i\geq 1$, $p_i^\Pn/\sigma^\Pn\to \theta^\Theta_i$ and $p_1^\Pn\to 0$.
\end{Hypo} 
\begin{Hypo}[$\Tn \ply \Theta$]\label{Hypo2T} For all $i\geq 1$, $\theta_i^{\Tn}\to \theta^\Theta_i$.
\end{Hypo} 
\paragraph{A few words on $\ply$.} One can put a topology on $\Omega:=\OmegaD\cup \OmegaP\cup \OmegaT$ such that $\ply$ corresponds with the notion of convergence on $\Omega$. This has several advantages (see \cite{Uniform} Section 8.1 for details). 
First $(\Omega,\ply)$ is a Polish space.  Moreover, our results can be seen as continuity results for the function which associate to a set of parameters a metric space. Hence, our results can be used to study graph with random degree distributions. Furthermore $\Omega_\D$ is dense on $\Omega$. So our results on $(\D,k)$-graphs imply the others. 
\subsection{The bias does not diverge}

As explained previously in the introduction, our approach relies entirely on the stick breaking construction of \cite{Uniform} and on the study of the bias corresponding to the cycle-breaking construction. More precisely given the following result, our main results are applications of \cite{Uniform}.
\begin{proposition} \label{MainMainMain}  
For every $x,m\in \R^+$ let $h_m:=x\1_{x\geq m}$. We have,
\[ \lim_{m\to \infty} \max_{\D\in \OmegaD:N^\D\geq 2k }\E\left [h_{m}\left (\frac{\square_{\square,k}(T^{\D})}{(\lambda^\D)^k}\right )\right ]=0. \]
\end{proposition}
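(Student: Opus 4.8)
The plan is to upgrade the trivial deterministic bound on the bias to a moment bound that is \emph{uniform in} $\D$, and then conclude by Markov's inequality; the whole content is a quantitative form of the statement that the cut leaves $\star_1,\dots,\star_{2k}$ are not atypically close.

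\textbf{Step 1 (reduction to cycle lengths).} Since $\G_{(\star_i)_{1\leq i\leq 2k}}(T^\D)$ is connected with surplus exactly $k$, it has at most $k$ loops and its parallel edges have total excess at most $k$; as $\circ$ only reads loops and multiplicities, $\circ(\G_{(\star_i)_{1\leq i\leq 2k}}(T^\D))\le 2^k(k+1)!=:C_k$ deterministically --- the bound already recorded just before the definition of $\proba^{\P,k}$ in Section \ref{PkDef}. Moreover, gluing $(\star_{2i-1},\star_{2i})$ turns the $T^\D$-path joining their fathers into a cycle $C_i$ of length $d^\D(\star_{2i-1},\star_{2i})-1$, and this is a cycle of $\G_{(\star_1,\star_2),\dots,(\star_{2i-1},\star_{2i})}(T^\D)$, whence $\square_i(T^\D)\ge|C_i|$. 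Therefore, with the notation of Section \ref{DkDef},
\[ \frac{\square_{\square,k}(T^\D)}{(\lambda^\D)^k}\ \le\ \frac{C_k}{\prod_{i=1}^k\lambda^\D\big(d^\D(\star_{2i-1},\star_{2i})-1\big)} , \]
and since $h_m(x)=x\1_{x\ge m}\le m^{-\eta}x^{1+\eta}$ it suffices to exhibit $\eta=\eta(k)>0$ with $\sup_{\D\in\OmegaD:\,N^\D\ge 2k}\E\big[\big(\prod_{i=1}^k\lambda^\D(d^\D(\star_{2i-1},\star_{2i})-1)\big)^{-(1+\eta)}\big]<\infty$.

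\textbf{Step 2 (cycles through the stick-breaking construction).} Run Algorithm \ref{D-tree}, and let $R_0:=0<R_1<R_2<\cdots$ be the repetition times of the uniform $\D$-tuple $A^\D$. Then $\star_\ell$ ends the $\ell$-th branch, of length $R_\ell-R_{\ell-1}$, which sprouts at the vertex $A^\D_{R_{\ell-1}}$ inside the subtree already spanned by $\star_0,\dots,\star_{\ell-1}$; consequently $d^\D(\star_{2i-1},\star_{2i})-1=d^\D(\star_{2i-1},A^\D_{R_{2i-1}})+(R_{2i}-R_{2i-1})-1$ with the two displayed summands nonnegative. The structural point is that $|C_i|$ is small only if \emph{both} ends $\star_{2i-1}$ and $\star_{2i}$ of $C_i$ are close to the sprouting vertex $A^\D_{R_{2i-1}}$ --- two near-coincidences rather than one, which is what will yield a quadratic lower tail.

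\textbf{Step 3 (the heart: uniform conditional lower tails).} Let $\mathcal F_j:=\sigma(A^\D_1,\dots,A^\D_{R_j})$ and $D_j:=\sum_{V_i\ \text{seen by}\ R_j}(d^\D_i-1)$, so $D_0=0$. Conditionally on $\mathcal F_{2i-1}$ the further draws are a uniform sample without replacement from the remaining multiset; first-moment computations --- valid while $R_{2i}\le\n^\D/2$, and using $\lambda^\D\n^\D=\sigma^\D$ together with $(\sigma^\D)^2\ge N^\D\ge 2k$ (so $\sigma^\D>1$) --- give, for all $\e>0$,
\[ \proba\big(\lambda^\D|C_i|\le\e\ \big|\ \mathcal F_{2i-1}\big)\ \le\ c_k\Big(\e\,\big(1+\tfrac{D_{2i-1}}{\sigma^\D}\big)\Big)^{2}, \qquad \E\big[1+\tfrac{D_j}{\sigma^\D}\ \big|\ \mathcal F_{j-1}\big]\ \le\ c_k\big(1+\tfrac{D_{j-1}}{\sigma^\D}\big) , \]
the first factor of the squared term bounding ``$R_{2i}-R_{2i-1}$ small'' and the second ``$A^\D_{R_{2i-1}}$ close to $\star_{2i-1}$ along branch $2i-1$'' (the sprouting vertex being a remaining-copy-weighted uniform pick among the already-explored vertices), and the second inequality saying that the accumulated degree stays of order $\sigma^\D$ --- here it is essential that the draws are size-biased. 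Propagating these two families of bounds along $\mathcal F_0\subset\mathcal F_1\subset\cdots\subset\mathcal F_{2k}$ (they have sub-polynomial tails) gives the uniform $(1+\eta)$-moment bound of Step 1 for $\eta>0$ small; the complementary event $\max_{\ell\le 2k}R_\ell>\n^\D/2$ is negligible (there the bias is anyway $\le C_k(\lambda^\D)^{-k}$ and the event has very small probability), or one argues instead via the already-available $\P$-tree and ICRT limits, on which the analogous expectations are finite by (the $\P$-analogue of) Lemma \ref{Followers}.

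\textbf{Expected main obstacle.} Everything is in Step 3: the conditional estimates must be proved \emph{uniformly over $\OmegaD$ and over all histories} $\mathcal F_{j-1}$. The delicate interplay is between $|C_i|$ and $D_{2i-1}$: exploring a very high-degree vertex early shortens the subsequent branches and cycles, which \emph{increases} $1/(\lambda^\D|C_i|)$, but it simultaneously inflates $\sigma^\D$ and hence $\lambda^\D$; following $D_j/\sigma^\D$ rather than $R_j$ is precisely what turns this self-correction into a quantitative, $\D$-uniform statement. Steps 1 and 2 are routine bookkeeping on top of the constructions of Sections \ref{2}--\ref{ModGraph}.
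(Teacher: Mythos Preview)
Your Steps 1 and 2 are correct bookkeeping and coincide with the paper's Lemma \ref{MIAOUH1}. The gap is in Step 3, and it is not a matter of missing details: the displayed conditional quadratic tail bound is false as stated. By your own definition $\mathcal F_{2i-1}=\sigma(A^\D_1,\dots,A^\D_{R_{2i-1}})$, so the sprouting vertex $A^\D_{R_{2i-1}}$ and hence $d_0:=d^\D(\star_{2i-1},A^\D_{R_{2i-1}})-1$ are $\mathcal F_{2i-1}$-measurable. On the event $\{d_0=0\}$ (two consecutive copies of the same vertex, which has positive probability whenever $d_1^\D\ge2$) one gets $|C_i|=R_{2i}-R_{2i-1}$, and a first-moment computation gives only the \emph{linear} estimate
\[
\proba\big(\lambda^\D|C_i|\le\e\ \big|\ \mathcal F_{2i-1}\big)\ \asymp\ \e\,\frac{D_{2i-1}}{\sigma^\D},
\]
which dominates your claimed $c_k\e^{2}(1+D_{2i-1}/\sigma^\D)^{2}$ for small $\e$ and typical values of $D_{2i-1}/\sigma^\D$. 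The parenthetical ``the sprouting vertex being a remaining-copy-weighted uniform pick'' shows the confusion: that pick is already recorded in $\mathcal F_{2i-1}$. Shifting to $\mathcal F_{2i-2}$ does not rescue the argument either, because where $A^\D_{R_{2i-1}}$ lands along branch $2i-1$ depends on the (uncontrolled) degrees of the freshly revealed vertices on that branch, not merely on $D_{2i-2}$. Finally, the fallback to ``the $\P$-analogue of Lemma \ref{Followers}'' is circular: that lemma is deduced \emph{from} Proposition \ref{MainMainMain}.

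The paper avoids exactly this obstacle by a different reduction. After your Step 1 it invokes the exchangeability of the leaves $(\star_i)_{1\le i\le N^\D+1}$ together with H\"older to replace the $k$ separate cycle lengths $d^\D(\star_{2i-1},\star_{2i})$ by powers of the \emph{root} distances $d^\D(\star_0,\star_a)$, $1\le a\le k$ (Lemmas \ref{MIAOUH1}--\ref{masse d'une truite}). These are, up to constants, the repetition times $Y_a$ of the $\D$-tuple, and for those one has genuinely super-linear lower tails $\proba(Y_a\le x\,\n^\D/\sigma^\D)\lesssim x^{a+1}(\log 1/x)^{O(1)}$ via a coupling with a Poisson process on $\mu^\D$ (Lemmas \ref{truecutbound}--\ref{falsecutbound}); the quadratic behaviour you were aiming for is the case $a=1$, but it holds \emph{unconditionally} thanks to the symmetry reduction, not pathwise. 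This argument furthermore needs the side condition $2N^\D\ge \n^\D/\sigma^\D$, and the paper spends Section \ref{Bias2} removing it by contracting degree-$2$ vertices (the $\nabla$ operation) --- a second, independent ingredient that your sketch does not address.
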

\subsection{Gromov--Prokhorov convergence} \label{resultGP}
First let us specify the measures that we consider.
Let $\OmegaM$ the set of measures on $\{V_i\}_{i\geq 1}\cup\{\star_i\}_{i\geq 0}$. We say  that a sequence $(\M_n)_{n\in \N}\in \OmegaM^\N$ converges toward $\M\in \OmegaM$ if $\max_{i\in \N} |\M_n(V_i)- \p(V_i) | \to 0$ and $\max_{i\in \N} |\M_n(\star_i)- \p(\star_i) | \to 0$. In the whole paper, for every $\D\in \OmegaD$, $\M^{\D,k}$ denote a probability measure with support on $\V^{\D,k}:=\{V_i,i:d_i\geq 1\}\cup \{ \star_i, i\in \{0\}\cup \{2k+1,\dots ,N^\D+1\}\}$. Similarly, for every $\P\in \OmegaP$, $\M^\P$ denote a probability measure with support on $\V^\P:=\{V^\P_i\}_{i:p_i>0}$. Also, we sometimes let $0$ denote the null measure. 

Then we recall the probability measure on ICRT of \cite{ICRT1}. To simplify our expressions, we write $\mu^\Theta=\infty$ when either $\theta^\Theta_0>0$ or $\sum_{i=1}^\infty \theta^\Theta_i=\infty$, (since $\mu^\Theta=\infty$ iff a.s. $\mu^\Theta[0,\infty]=\infty$). 

\begin{definition*}[\cite{ICRT1} Proposition 3.2]  Let $\Theta\in \OmegaT$ be such that $\mu^\Theta=\infty$. Almost surely, as $n\to \infty$, $\frac{1}{n}\sum_{i=1}^n \delta_{Y_i^\Theta}$ converges weakly toward a probability measure $\p^\Theta$ on $\T^\Theta$.
\end{definition*} 

\begin{remark} When $\mu^\Theta<\infty$, $\frac{1}{n}\sum_{i=1}^n \delta_{Y_i^\Theta}$ does not converge. For this reason, although we prove the convergence of the distance matrices, one cannot define a proper measure for the GP convergence.
\end{remark}
Then let us define a probability measure on $G^{\Theta,k}$. It directly follows from \cite{ICRT1} Proposition 3.2, that a.s. $\sum_{i=1}^n \delta_{Y^{\Theta,k}_i}$ converges weakly toward a probability measure  $\p^{\Theta,k}$ on $T^{\Theta,k}$. Since convergence in $T^{\Theta,k}$ imply convergence in $G^{\Theta,k}$, it still makes sense to define $\p^{\Theta,k}$ on $G^{\Theta,k}$.

We now state the main result of this section. In what follows, $d^{\D,k}$ is the graph distance on $G^{\D,k}$ and similarly $d^{\P,k}$ is the graph distance on $G^{\P,k}$. 
\begin{theorem} \label{GP} The following convergences hold weakly for the GP topology
\begin{compactitem} 
\item[(a)] If $\Dn\ply \P$ and $\M^{\Dn,k}\to \M^\P$ then
\[ \left (G^{\Dn,k},d^{\Dn,k},\M^{\Dn,k} \right) \limit^{\WGP} (G^{\P,k}, d^{\P,k},\M^\P).\] 
\item[(b)] If $\Dn \ply \Theta$, $\M^{\Dn,k} \to 0$, and $\mu^\Theta=\infty$ then
\[ \left ( G^{\Dn,k},\lambda^\Dn d^{\Dn,k},\M^{\Dn,k} \right) \limit^{\WGP} (G^{\Theta,k},d^{\Theta,k},\p^{\Theta,k}).  \]
\item[(c)] If $\Pn \ply \Theta$, $\M^\Pn \to 0$, and $\mu^\Theta=\infty$ then
\[ \left ( G^{\Pn,k},\s^\Pn d^{\Pn,k},\M^\Pn \right) \limit^{\WGP} (G^{\Theta,k},d^{\Theta,k},\p^{\Theta,k}).  \] 
\item[(d)] If $\Tn \ply \Theta$,  $\mu^{\Theta_n}=\infty$ for every $n\in \N$, and $\mu^\Theta=\infty$ then
\[(G^{\Tn,k},d^{\Tn,k},\p^{\Tn,k}) \limit^{\WGP} (G^{\Theta,k},d^{\Theta,k},\p^{\Theta,k}).  \]
\end{compactitem}
\end{theorem}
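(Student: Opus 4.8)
## Proof proposal for Theorem~\ref{GP}

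\textbf{Overall strategy.} The plan is to reduce all four statements to the single GP-characterization of Lemma~\ref{equivGP2}, namely: it suffices to prove (i) convergence in distribution of the distance matrices sampled from the relevant measures, and (ii) convergence of the empirical measures $\frac1n\sum_{i=1}^n \delta_{Y_i}$ to the limiting measure. The key structural observation, already emphasized in Section~\ref{1.2}, is that the bias $\square_{\square,k}$ used to build $(\D,k)$-graphs, $(\P,k)$-graphs, and $(\Theta,k)$-ICRG is a \emph{measurable function of the finite subtree spanned by $\{\star_i\}_{1\le i\le 2k}$}, hence a measurable function of only finitely many initial branches of the stick-breaking construction. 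Since \cite{Uniform} gives us the GP (indeed GHP) convergence of the underlying trees $T^{\Dn}\to T^\P$ or $T^\Theta$ \emph{jointly} with the convergence of the first branches of the stick-breaking construction (equivalently, the positions $(Y_i)$ and glue points $(Z_i)$), the biased laws should pass to the limit by an absolute-continuity argument — provided the bias does not blow up. That last caveat is exactly Proposition~\ref{MainMainMain}, which gives uniform integrability of $\square_{\square,k}(T^{\D})/(\lambda^\D)^k$.

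\textbf{Step 1: set up the common framework.} For each model, write the distance matrix and empirical measure of $G^{\cdot,k}$ in terms of those of the underlying tree $T^{\cdot,k}$ (sampled via the stick-breaking / Aldous--Broder construction) together with the finitely many extra vertices $\star_1,\dots,\star_{2k}$ and the gluing operation $\G$ (resp.\ $\GT$). Because $\G$ contracts each pair $\{\star_{2i-1},\star_{2i}\}$ and replaces two edges by one, the graph distance $d^{\cdot,k}$ is a \emph{continuous} (in fact explicit min-of-sums) function of the tree distances among sampled points and the $\star_i$'s — see the definition of $\GT_{x,y}$. So convergence of the tree-level joint object (distance matrix among i.i.d.\ samples, plus distances to the $\star_i$'s, plus the value of the bias) implies convergence of the graph-level object.

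\textbf{Step 2: transfer the convergence through the bias.} From \cite{Uniform}, under Assumption~\ref{Hypo1} (resp.\ \ref{Hypo2}) we have joint weak convergence of $\bigl(M^{T^{\Dn}},\ (\text{first branches}),\ \M^{\Dn,k}\bigr)$ to the corresponding $\P$-tree (resp.\ $\Theta$-ICRT) object, with the appropriate scaling $\lambda^{\Dn} d^{\Dn,k}$ in the ICRT regime. Since $\square_{\square,k}$ is a bounded-and-measurable — but discontinuous — function of the first branches, I would argue as follows: first note $\square_{\square,k}(T^{\D})\le (k+1)!\,2^k$ in the $\P$-regime so the biased measure change is harmless there (statement (a)); in the ICRT-regime use Proposition~\ref{MainMainMain} together with the convergence $\square_n(\textbf Y,\textbf Z)/\lambda \to \square_n(\textbf Y^\Theta,\textbf Z^\Theta)$ of the ``number of breakable edges'' quantities (Lemma~\ref{CHIANTa}, Lemma~\ref{Followers}) to get convergence of the normalizing constants $\E[\square_{\square,k}]$ and, via a Skorokhod representation plus uniform integrability, convergence of expectations of the form $\E[f(\text{object})\,\square_{\square,k}]$ for bounded continuous $f$. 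This yields weak convergence of the \emph{biased} tuples, which is precisely the statement that $(A^{\Dn,k}$-tree, $\star_i$'s$)$ converges to $(T^{\Theta,k},Y_i^{\Theta,k})$. Statements (c) and (d) are the same argument with $\P$-trees (resp.\ $\Theta_n$-ICRT) as the starting point instead of $\D$-trees, using the $\Pn\ply\Theta$ (resp.\ $\Tn\ply\Theta$) convergence of the unbiased objects from \cite{Uniform}.

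\textbf{Step 3: the measures.} For (a) the measure convergence $\M^{\Dn,k}\to\M^\P$ is an hypothesis and passes through the (continuous) gluing map, and Lemma~\ref{equivGP2} closes the argument. For (b), (c), (d), the mass measures $\M^{\Dn,k}\to0$ and $\M^\Pn\to 0$ vanish in the limit, so the relevant measure is the canonical $\p^{\Theta,k}$; I would invoke \cite{ICRT1} Proposition 3.2 (valid since $\mu^\Theta=\infty$) for $\frac1n\sum\delta_{Y_i^{\Theta,k}}\to\p^{\Theta,k}$ on $T^{\Theta,k}$ hence on $G^{\Theta,k}$, and check that under the bias this a.s.\ convergence is preserved (the bias is a fixed finite-index function, so it only reweights the law, not the tail behavior of the empirical measure). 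One must also verify that, after reweighting, the pushforward of $\frac1n\sum\delta_{Y_i}$ under the discrete gluing still converges to $\p^{\Theta,k}$ on the glued space — this is immediate since gluing is $1$-Lipschitz and surjective.

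\textbf{Main obstacle.} The delicate point is Step~2: the bias $\square_{\square,k}$ is \emph{not} a continuous function of the limiting tree because $\square_n$ jumps when two of the $\star_i$'s (or cuts/glue points) collide or when a glue point lands exactly on a previously created branch point, and moreover in the ICRT regime it is genuinely unbounded before normalization. Handling this requires (a) Proposition~\ref{MainMainMain} for uniform integrability so that $L^1$-convergence of the bias follows from convergence in distribution, and (b) an a.s.\ continuity statement for $\square_{\square,k}$ at the limit object, i.e.\ that the limiting $\P$-tree / $\Theta$-ICRT a.s.\ avoids the (measure-zero) degenerate configurations — this should follow from the fact that the glue points $Z_i$ have continuous conditional laws given the tree built so far, so they a.s.\ miss the countable set of branch points and the previous cuts. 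Assembling these into a clean ``convergence through a discontinuous but a.s.-continuous, uniformly integrable weight'' lemma is where the real work lies; everything else is bookkeeping with the explicit formulas for $\G$, $\GT$, and $d^{\cdot,k}$.
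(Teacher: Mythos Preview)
Your proposal is correct and follows essentially the same route as the paper: convergence of the tree distance matrices from \cite{Uniform}, a.s.\ convergence of the bias via Lemma~\ref{CHIANTa} after a Skorokhod/Kolmogorov coupling, uniform integrability from Proposition~\ref{MainMainMain} (and Lemma~\ref{Followers}) to pass the reweighted expectations to the limit, continuity of the gluing map, and then Lemma~\ref{equivGP2}. Two minor remarks: for (c) and (d) the paper takes a shortcut via the density of $\Omega_\D$ in $(\Omega,\ply)$ rather than rerunning the argument (though it notes your direct approach works too); and your ``main obstacle'' slightly overstates the discontinuity issue, since Lemma~\ref{CHIANTa} already makes each $\square_c$ a \emph{continuous} function of the leaf distance matrix, so the only genuine problem is the blow-up of $1/\prod_i\square_i$ near zero, which is exactly what Proposition~\ref{MainMainMain} controls.
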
 
\subsection{Gromov--Hausdorff convergence} \label{4.3}
GH convergence requires additional assumptions. In \cite{Uniform} we give quantitative assumptions. Here, we simply state rudimentary assumptions. We proved in Section 7.3 of \cite{Uniform} that the assumptions of \cite{Uniform} imply the followings. To simplify the notations, for every tree (and every $\R$-tree) $T$ and $v_1,\dots, v_a\in T$, we write $T(\{v_i\}_{1\leq i\leq a})$ for the subtree spanned by $v_1, \dots, v_a$.
\begin{Hypo} \label{Hypo3} For every $\e>0$,
\[ \lim_{a \to \infty}  \limsup_{n\to +\infty} \proba \left (\lambda^\Dn d_H \left (T^\Dn(\{\star_i\}_{0\leq i \leq a}),T^\Dn \right )>\e  \right )=0, \]
\end{Hypo} 
\begin{Hypo} \label{Hypo3P} For every $\e>0$,
\[ \lim_{a \to \infty}  \limsup_{n\to +\infty} \proba \left (\s^\Pn d_H \left (T^\Pn(\{\star_i\}_{0\leq i \leq a}),T^\Pn \right )>\e  \right )=0, \]
\end{Hypo} 
\begin{Hypo} \label{Hypo3T}  For every $\e>0$,
\[ \lim_{a \to \infty}  \limsup_{n\to +\infty} \proba \left (d_H \left (T^\Tn(\{Y^{\Tn}_i\}_{0\leq i \leq a}),T^\Tn \right )>\e  \right )=0. \]
\end{Hypo} 
%
\pagebreak[2]
\begin{theorem} \label{THM2} The following convergences hold weakly for the GH-topology. 
\begin{compactitem}
\item[(a)] If $\Dn\ply \Theta$, $\M^\Dn \to 0$, and Assumption \ref{Hypo3} is satisfied then
\[ \left ( G^{\Dn,k}, \lambda^\Dn d^{\Dn,k} \right) \limit^{\WGH} (G^{\Theta,k},d^{\Theta,k}).  \]
\item[(b)]  If $\Pn \ply \Theta$, $\M^\Pn \to 0$, and Assumption \ref{Hypo3P} is satisfied then
\[ \left ( G^{\Pn,k},\s^\Pn d^{\Pn,k} \right) \limit^{\WGH} (G^{\Theta,k},d^{\Theta,k}).  \] 
\item[(c)] If $\Tn \ply \Theta$, and Assumption \ref{Hypo3T} is satisfied then
\[(G^{\Tn,k},d^{\Tn,k}) \limit^{\WGH} (G^{\Theta,k},d^{\Theta,k}).  \]
\end{compactitem}
\end{theorem}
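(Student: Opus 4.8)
The plan is to upgrade the Gromov--Prokhorov convergences of Theorem \ref{GP}(b),(c),(d) to Gromov--Hausdorff convergences by the standard ``GP $+$ tightness $\Rightarrow$ GH'' strategy, using Assumptions \ref{Hypo3}, \ref{Hypo3P}, \ref{Hypo3T} as the required tightness/approximation input. I will treat case (a) in detail; cases (b) and (c) are handled identically, replacing $\lambda^\Dn d^{\Dn,k}$ by $\s^\Pn d^{\Pn,k}$ (resp.\ $d^{\Tn,k}$), $\{\star_i\}$ by $\{Y^{\Tn}_i\}$, and Assumption \ref{Hypo3} by its analogue. The first step is to observe that $G^{\Dn,k}$ is obtained from the tree $T^{\Dn,k}$ by gluing the $k$ pairs $(\star_{2i-1},\star_{2i})$, so that the $n$-point restriction of $G^{\Dn,k}$ to the images of $\{\star_i\}_{0\le i\le a}$ (for $a\ge 2k$) is exactly $\G_{(\star_i)_{1\le i\le 2k}}(T^{\Dn,k}(\{\star_i\}_{0\le i\le a}))$, and similarly in the limit $G^{\Theta,k}$ is $\GT_{(Y_i)_{1\le i\le 2k}}$ applied to $T^{\Theta,k}$. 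Since gluing finitely many pairs of points is a $1$-Lipschitz operation on metric spaces that does not increase diameters or Hausdorff distances, the Hausdorff distance between $G^{\Dn,k}$ and its finite ``core'' $\G_{(\star_i)_{1\le i\le 2k}}(T^{\Dn,k}(\{\star_i\}_{0\le i\le a}))$ is bounded above by $d_H(T^{\Dn,k}(\{\star_i\}_{0\le i\le a}),T^{\Dn,k})$ in the ambient tree. Therefore Assumption \ref{Hypo3} (applied to the \emph{biased} tree $T^{\Dn,k}$ rather than $T^\Dn$ --- see the next paragraph) gives that, after rescaling by $\lambda^\Dn$, the cores approximate $G^{\Dn,k}$ uniformly well in probability as $a\to\infty$, uniformly in $n$.

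The second step is to transfer Assumption \ref{Hypo3}, stated for the unbiased $\D$-tree $T^\Dn$, to the biased tree $T^{\Dn,k}$. This is where Proposition \ref{MainMainMain} enters: the law of $T^{\Dn,k}$ is absolutely continuous with respect to that of (a relabelling of) $T^\Dn$ with Radon--Nikodym derivative proportional to $\square_{\square,k}(T^\Dn)/(\lambda^\Dn)^k$, and this density is bounded in $L^1$ uniformly in $n$ with uniformly integrable tails by Proposition \ref{MainMainMain}. Consequently any family of events whose probability tends to $0$ uniformly in $n$ under the unbiased law also has probability tending to $0$ uniformly in $n$ under the biased law (a bounded density transfers $\limsup$ bounds; uniform integrability upgrades this to the uniform-in-$n$ statement), so Assumption \ref{Hypo3} holds verbatim with $T^\Dn$ replaced by $T^{\Dn,k}$. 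Combined with step one, this yields the tightness ingredient: for every $\e>0$,
\[
\lim_{a\to\infty}\ \limsup_{n\to\infty}\ \proba\!\left(d_H\!\left(\text{core}_a(G^{\Dn,k}),\,G^{\Dn,k}\right)>\e\right)=0,
\]
where $\text{core}_a$ denotes the rescaled image of the finite restriction, and the analogous statement holds in the limit space $G^{\Theta,k}$ by the Definition quoted from \cite{ICRT1} together with Assumption \ref{Hypo3} passed through to the $\Theta$-side (or, more directly, since the ICRT is compact and $\{Y_i\}$ is dense).

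The final step is the Skorokhod-type gluing argument. By Theorem \ref{GP}(b) we have GP convergence, which by Lemma \ref{equivGP2} is equivalent to joint convergence of the distance matrices $(\lambda^\Dn d^{\Dn,k}(x_i,x_j))_{i,j}$ sampled from $\M^{\Dn,k}$; but here I instead want the distance matrix of the \emph{deterministic} points $\{\star_i\}_{0\le i\le a}$, which converges because it is a continuous (indeed, locally constant in the discrete case, continuous under gluing in the limit) function of the first $O(a)$ branches of the stick-breaking construction --- this is precisely the mechanism by which Theorem \ref{GP} was itself proved, so the finite-dimensional convergence of these marked distance matrices is available. Working on a common probability space where these converge almost surely, each $\text{core}_a(G^{\Dn,k})$ converges to $\text{core}_a(G^{\Theta,k})$ in $d_{\GH}$ as $n\to\infty$ for fixed $a$ (finite metric spaces, convergence of the distance matrix), and then letting $a\to\infty$ and invoking the uniform approximation from step two together with the triangle inequality for $d_{\GH}$ gives $d_{\GH}(G^{\Dn,k},G^{\Theta,k})\to 0$ in probability, hence the claimed weak convergence. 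I expect the main obstacle to be the rigorous handling of step two --- namely checking that the uniform-integrability bound of Proposition \ref{MainMainMain} really does let one push Assumption \ref{Hypo3} through the change of measure uniformly in $n$ (as opposed to merely for each fixed $n$), since the bias depends on the very same branches that Assumption \ref{Hypo3} controls and one must be careful that conditioning on the ``bad'' Hausdorff event does not blow up the density; the resolution is that the density is \emph{bounded} (by $(k+1)!\,2^k$ after the $(\lambda^\Dn)^k$ normalisation, up to the normalising constant which is bounded below), so no such blow-up can occur and a crude bound suffices.
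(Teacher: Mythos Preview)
Your overall strategy---transfer Assumption~\ref{Hypo3} to the biased tree $T^{\Dn,k}$ via the uniform integrability of Proposition~\ref{MainMainMain}, then combine finite-core convergence with this uniform approximation---is sound and does lead to a proof. However, it differs from the paper's route, and your final sentence contains a genuine error.

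\textbf{Comparison with the paper.} The paper does \emph{not} transfer the tightness assumption to the biased law. Instead it first establishes, for the \emph{unbiased} trees, the pointed convergence
\[
(T^\Dn,\lambda^\Dn d^\Dn,\{\star_i\}_{1\le i\le 2k})\limit^{\WGH^{2k}}(T^\Theta,d^\Theta,\{Y_i^\Theta\}_{1\le i\le 2k}),
\]
by combining the matrix convergence from \cite{Uniform}, the reconstruction Lemma~\ref{reconstructTHM}, and Assumption~\ref{Hypo3} directly. Only \emph{after} this does the paper bias: on a Skorokhod space, $\square_{\square,k}(T^\Dn)/(\lambda^\Dn)^k$ converges a.s.\ by Lemma~\ref{CHIANTa}, and Proposition~\ref{MainMainMain} gives the uniform integrability needed to pass the bias through expectations of bounded continuous functionals on $\K^{2k}_{\GH}$. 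Gluing is then a continuous map on $\K^{2k}_{\GH}$. This ordering (converge, then bias) is cleaner than yours (bias, then converge), because it avoids having to argue that Assumption~\ref{Hypo3} survives the change of measure. Your route works too, but costs an extra lemma.

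\textbf{The error in your last sentence.} You write that ``the density is \emph{bounded} (by $(k+1)!\,2^k$ after the $(\lambda^\Dn)^k$ normalisation, up to the normalising constant which is bounded below)''. This is false. The bound $\square_{\square,k}\le (k+1)!\,2^k$ holds for the \emph{unnormalised} discrete quantity; after dividing by $(\lambda^\Dn)^k$ with $\lambda^\Dn\to 0$, the ratio is unbounded in $n$. Equivalently, the Radon--Nikodym derivative $\square_{\square,k}(T^\Dn)/\E[\square_{\square,k}(T^\Dn)]$ is not uniformly bounded, because the denominator is of order $(\lambda^\Dn)^k\to 0$. So your ``crude bound'' does not exist; you must rely on the uniform-integrability argument you already gave in Step~2 (split at level $m$, use Proposition~\ref{MainMainMain} for the tail, then send $a\to\infty$ and $m\to\infty$). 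That argument is correct and is exactly what is needed; just delete the claim that the density is bounded.
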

\begin{remark} $\bullet$ Unlike the assumptions of \cite{Uniform}, Assumption \ref{Hypo3}, \ref{Hypo3P}, \ref{Hypo3T} are sufficient and necessary. \\
$\bullet$ By \cite{Uniform} Lemma 4, one can deduce the GHP convergence from the GP and GH convergence and the fact that, since $p^\Theta$ have a.s. full support on $\T^\Theta$ (see \cite{ICRT1}), $p^{\Theta,k}$ have a.s. full support on $G^{\Theta,k}$.
\end{remark}
\section{Study of the bias} \label{Bias}
\subsection{Proof of Proposition \ref{MainMainMain} in the typical case} \label{Bias1}
Recall that for every $x,m\in \R^+$, $h_m=x\1_{x\geq m}$. Recall the definitions of $(\square_i)_{1\leq i \leq k}$ and $\square_{\square,k}$ from section \ref{DkDef}. For every $\D\in \OmegaD$ with $N^\D\geq 2k$ and $m\in \R^+$ let
\[f^\D(m):=\E\left [h_{m}\left (\frac{\square_{\square,k}(T^{\D})}{(\lambda^\D)^k}\right )\right ].\]
In this section we estimate $f^\D$ under the additional assumption $2N^\D\geq  \n^\D/\s^\D$, which is satisfied when there are not too many vertices with degree 2. 
\begin{proposition} \label{MainMain}  There exists $c,C>0$ such that for every $\D\in \OmegaD$ with $N^\D\geq \max(2k,\n^\D/(2\s^\D))$, and $m>0$, we have $f^\D(m)\leq Cm^{-c}$.
\end{proposition}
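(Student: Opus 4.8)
The plan is to control the biased quantity $\square_{\square,k}(T^{\D})/(\lambda^\D)^k = \circ(\G_{(\star_i)_{1\leq i\leq 2k}}(T^{\D}))\big/\bigl((\lambda^\D)^k\prod_{i=1}^{k}\square_i(T^{\D})\bigr)$ by bounding the numerator deterministically and showing the denominator is not too small with overwhelming probability. First I would dispose of the numerator: by Lemma \ref{CBapp}(d)/\ref{constructDK}, $\circ(\G_{(\star_i)_{1\leq i\leq 2k}}(T^{\D})) \le 2^k k!$ is a universal constant (each of the $k$ gluings creates at most one loop or multiple edge, contributing a bounded factor), so it suffices to lower-bound $\prod_{i=1}^k \square_i(T^{\D})$. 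Since $\square_i$ counts edges in the spanning structure of the multigraph after $i$ gluings and $\square_i \ge \square_{i-1} - O(1)$ crudely, the real content is a single-gluing estimate: after gluing $(\star_{2i-1},\star_{2i})$, the number of breakable edges $\square_i$ is at least (a constant times) the distance $d^{T^{\D}}(\star_{2i-1},\star_{2i})$ between the two glued leaves, because the cycle created by the gluing passes through every edge on the tree-path between them, and all such edges are breakable. Hence it is enough to show that with high probability $d^{T^{\D}}(\star_{2i-1},\star_{2i}) \gtrsim 1/\lambda^\D$ for each $i$, i.e. that the glued leaves are macroscopically far apart in the rescaled tree.

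The heart of the proof is then this distance lower bound, and this is where I expect the main obstacle to lie. Following the strategy sketched in Section \ref{1.2}, I would use the stick-breaking construction of Algorithm \ref{D-tree}: the leaves $\star_j$ are created at the successive ``return times'' of the $\D$-tuple $A^\D$ (the steps $i$ where $A_i \in T_{i-1}$), and the tree-distance from the root $\star_0$ to an early leaf $\star_j$ is the length of the corresponding initial branch, which is a sum of inter-return increments. Since the bias is symmetric enough, I would reduce (as the introduction indicates) to lower-bounding $d^{T^{\D}}(\star_0,\star_i)$ for $1\le i\le k$; by the branch decomposition this distance is controlled by the number of steps before the $i$-th repetition in the uniform $\D$-tuple. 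The key probabilistic estimate is then: the first repetition in a uniform $\D$-tuple occurs, with probability $\ge 1-\delta$, only after $\gtrsim_\delta 1/\lambda^\D = \n^\D/\s^\D$ steps — a birthday-type bound, where $\s^\D/\n^\D$ plays the role of the collision rate. This is exactly where the additional hypothesis $N^\D \ge \n^\D/(2\s^\D)$ is used: it guarantees there are enough leaves (equivalently, the tuple is long enough relative to $1/\lambda^\D$) that the first $k$ repetitions, each taking $\gtrsim 1/\lambda^\D$ steps, all fit inside the tuple without exhausting it, so the ``typical'' picture is valid.

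Concretely, I would carry out the steps in this order: (1) bound $\circ(\cdot) \le 2^k k!$ and record that $h_m(cx) \le c\, h_{m/c}(x)$ type scaling lets us absorb constants, reducing to estimating $\proba\bigl(\prod_i \square_i(T^{\D}) \le t/(\lambda^\D)^k\bigr)$ and then integrating the tail; (2) prove the geometric lemma $\square_i(T^{\D}) \ge c\, d^{T^{\D}}(\star_{2i-1},\star_{2i})$, or more robustly $\square_{\square,k} \le C/\prod_i d^{T^{\D}}(\star_{2i-1},\star_{2i})$; (3) prove the probabilistic lemma that, under $N^\D \ge \n^\D/(2\s^\D)$, for each fixed threshold the event $\{\lambda^\D d^{T^{\D}}(\star_{j},\star_{j'}) < \eta\}$ for $j,j'\le 2k$ has probability $\le C'\eta$ uniformly in $\D$, by expressing the distance via the stick-breaking increments and a union bound over pairs; (4) combine: $f^\D(m) = \E[h_m(\square_{\square,k}/(\lambda^\D)^k)] = \int_m^\infty \proba(\square_{\square,k}/(\lambda^\D)^k > u)\,du + m\,\proba(\square_{\square,k}/(\lambda^\D)^k > m)$, and the tail bound from (2)+(3) gives $\proba(\square_{\square,k}/(\lambda^\D)^k > u) \le C'' u^{-1/k}$ say, which integrates to $f^\D(m) \le C m^{-c}$ with $c = 1/k - $ something, or better via a moment bound $\E[(\square_{\square,k}/(\lambda^\D)^k)^{1+c}] < \infty$ uniformly. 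The main difficulty is step (3): getting the distance lower bound \emph{uniform over all} $\D \in \OmegaD$ with the stated constraint, since the inter-return increments in the stick-breaking tuple are not i.i.d. and their law depends intricately on $\D$; handling vertices of very high degree (which dominate $\s^\D$) versus many vertices of moderate degree requires care, and this is presumably why the ``atypical'' case (not covered by $N^\D \ge \n^\D/(2\s^\D)$) is deferred to a separate argument.
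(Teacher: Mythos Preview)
Your steps (1) and (2) are correct and match the paper's Lemma \ref{MIAOUH1}, which reduces everything to bounding $g^\D(\e) := \E\bigl[\1_{\lambda^\D d(\star_1,\star_2)\le\e}\prod_{i=1}^k (\lambda^\D d(\star_{2i-1},\star_{2i}))^{-1}\bigr]$. The gap is in (3)--(4): a \emph{marginal} bound $\proba(\lambda^\D d(\star_j,\star_{j'}) < \eta) \le C'\eta$ (or even $\le C'\eta^2$) is too weak, because the $k$ distances are not independent. A union bound on $\{\prod_i (\lambda^\D d_i)^{-1} > u\} \subset \bigcup_i \{(\lambda^\D d_i)^{-1} > u^{1/k}\}$ yields at best $\proba(\square_{\square,k}/(\lambda^\D)^k > u) \lesssim u^{-2/k}$, which for $k\ge 2$ does not integrate, so $f^\D(m)= m\,\proba(X\ge m) + \int_m^\infty \proba(X>u)\,du$ is not even shown to be finite --- your own ``$c = 1/k - $ something'' signals this. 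H\"older is no better at this level: it gives $\E[\prod_i (\lambda^\D d_i)^{-1}] \le \E[(\lambda^\D d)^{-k}]$, and that $k$-th negative moment diverges under any marginal tail with exponent $\le k$.

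The paper's fix is a genuine joint moment argument. By H\"older and leaf-symmetry (Lemma \ref{bias1}) one first \emph{conditions on the tree}, takes the $k$-th power of the conditional expectation, and thereby reduces $g^\D(\e)$ to the moments $\E[M_n^k]$, where $M_n$ is the proportion of leaves at rescaled distance $\sim 2^{-n}$ from the root $\star_0$. The key estimate (Lemma \ref{masse d'une truite} combined with Lemmas \ref{truecutbound}--\ref{falsecutbound}) is $\E[M_n^k] \le Cn^c\,2^{-(k+1)n}$: the exponent is $k+1$, one better than what independence alone would give, and this extra power is exactly what makes $\sum_n 2^{kn}\E[M_n^k]$ converge. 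It comes from $\proba(Y_a \le x\,\n/\s) \lesssim x^{a+1}(\log x^{-1})^a$ for the $a$-th repetition time in the $\D$-tuple, proved not by a discrete birthday bound but via a coupling with the continuum Poisson stick-breaking of \cite{Uniform} and the estimate $\E[\mu[0,x]^a] = O(x)$ for small $x$. Finally, the hypothesis $N^\D\ge \n^\D/(2\s^\D)$ is used only to convert the factors $N^{-(k-a)}$ appearing in Lemma \ref{masse d'une truite} into powers of $\s/\n$, not to guarantee the tuple is long enough as you suggest.
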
 
Our proof is organized as follow: We first upper bound $\square_{\square,k}$. Then we use H\"older's inequality to upper bound $f^\D(\e)$ with the numbers of leaves in some open balls around $\star_0$. Then we use Algorithm \ref{D-tree} to upper bound those numbers with $(Y_i)_{1\leq i \leq k}$. Finally we use the  continuum $\D$-tree construction of \cite{Uniform} to study $(Y_i)_{1\leq i \leq k}$ through random Poisson point process.

Let $d^\D$ be the graph distance in $T^\D$. Let $d'^\D(\cdot, \cdot):=\lambda^\D d^\D(\cdot,\cdot)$. We have:
\begin{lemma} \label{MIAOUH1} Let $C=2^{2k}(k+1)!$. For every $\e>0$, for every $\D\in \OmegaD$ with $N^\D\geq 2k$,
\[ f^\D(C\e^{-k})/(kC)\leq g^\D(\e):=\E\left [\frac{\1_{d'(\star_{1},\star_{2})\leq \e}}{\prod_{i=1}^k  d'(\star_{2i-1},\star_{2i})} \right ]. \]
\end{lemma}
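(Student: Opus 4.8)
The plan is to dominate the random quantity $\square_{\square,k}(T^\D)/(\lambda^\D)^k$ by a deterministic function of the pairwise leaf distances, and then to reduce the bound on $f^\D$ to one on $g^\D$ using the monotonicity of $h_m$, a pigeonhole argument, and the exchangeability of the leaves $\star_1,\dots,\star_{2k}$ in a $\D$-tree. Throughout, write $\Pi:=\prod_{i=1}^k d'(\star_{2i-1},\star_{2i})$.

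First I would establish the deterministic inequality
\[ \frac{\square_{\square,k}(T^\D)}{(\lambda^\D)^k}\ \le\ \frac{C}{\Pi},\qquad C=2^{2k}(k+1)!, \]
valid for every realization of $T^\D$. For the numerator, a direct enumeration gives $\circ\big(\G_{(\star_i)_{1\le i\le 2k}}(T^\D)\big)\le 2^k(k+1)!$: gluing the $k$ pairs adds exactly $k$ edges to the loopless simple tree $T^\D$, so there are at most $k$ new loops (contributing a factor $\le 2^k$) and, since distributing at most $k$ extra parallel edges among vertex pairs maximises $\prod_{v,w}\#_{v,w}!$ when they all fall on a single pair, $\prod_{v,w}\#_{v,w}!\le (k+1)!$. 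For the denominator I would show $\square_i(T^\D)\ge d^\D(\star_{2i-1},\star_{2i})-1\ge \tfrac12\,d^\D(\star_{2i-1},\star_{2i})$, using $d^\D\ge 2$ between distinct leaves: the fathers $F_{2i-1},F_{2i}$ of $\star_{2i-1},\star_{2i}$ are internal vertices, hence they and the $T^\D$-geodesic joining them are untouched by the first $i-1$ gluings (which only delete leaf-edges and identify leaves), so in $\G_{(\star_1,\star_2),\dots,(\star_{2i-1},\star_{2i})}(T^\D)$ that geodesic together with the added edge $\{F_{2i-1},F_{2i}\}$ forms a cycle with $d^\D(\star_{2i-1},\star_{2i})-1$ edges, all of them non-bridges and therefore counted by $\square_i$. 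Combining the two bounds, dividing by $(\lambda^\D)^k$, and using $d'=\lambda^\D d^\D$ yields the display.

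Next, since $h_m$ is non-decreasing on $\R^+$ and $0\le \square_{\square,k}(T^\D)/(\lambda^\D)^k\le C/\Pi$, taking expectations gives
\[ f^\D(m)\ \le\ \E\big[h_m(C/\Pi)\big]\ =\ C\,\E\!\left[\frac{\1_{\Pi\le C/m}}{\Pi}\right]. \]
On the event $\{\Pi\le C/m\}$ at least one of the $k$ (positive) factors of $\Pi$ is $\le (C/m)^{1/k}$, so $\1_{\Pi\le C/m}\le\sum_{j=1}^k\1_{d'(\star_{2j-1},\star_{2j})\le (C/m)^{1/k}}$. Finally, a $\D$-tree is uniform over all trees with its (fixed) vertex labels and degree constraints, so its law is invariant under any permutation of the leaf-labels $\{\star_0,\dots,\star_{N^\D+1}\}$; applying the transposition exchanging the pair $(\star_{2j-1},\star_{2j})$ with $(\star_1,\star_2)$ shows that each of the $k$ resulting terms equals $C\,g^\D((C/m)^{1/k})$. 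Hence $f^\D(m)\le kC\,g^\D((C/m)^{1/k})$, and taking $m=C\e^{-k}$, for which $(C/m)^{1/k}=\e$, gives $f^\D(C\e^{-k})/(kC)\le g^\D(\e)$.

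The step I expect to demand the most care is the deterministic lower bound $\square_i(T^\D)\ge d^\D(\star_{2i-1},\star_{2i})-1$: one must check precisely that the earlier gluings never sever the relevant $T^\D$-geodesic (true because they act only on leaves, never on internal vertices) and that every edge of the resulting cycle persists as a non-bridge through all later identifications — this is the one point at which the combinatorial geometry of the cycle-breaking operation genuinely enters.
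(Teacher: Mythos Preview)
Your argument is correct and follows essentially the same route as the paper's proof: bound $\circ\le 2^k(k+1)!$, bound each $\square_i(T^\D)$ below by $d^\D(\star_{2i-1},\star_{2i})/2$ via the cycle through the fathers, combine to get $\square_{\square,k}(T^\D)/(\lambda^\D)^k\le C/\Pi$, and then conclude by a pigeonhole on the factors of $\Pi$ together with leaf-exchangeability. The paper compresses the last two steps into a single sentence (``the desired result then follows from the symmetry of the leaves''), but the content is the same; your final worry about later identifications is unnecessary, since $\square_i$ only involves the first $i$ gluings.
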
 
\begin{proof} First by definition of $\square_{\square,k}$, $\square_{\square,k}\leq (k+1)!2^k/ \prod_{i=1}^k \square_i$. Then note for every $1\leq i \leq k$ that $\square_i(T^\D)\geq d(\star_{2i-1},\star_{2i})-1\geq d(\star_{2i-1},\star_{2i})/2$. Indeed, the path between the father of $\star_{2i-1}$ and the father of $\star_{2i}$, together with the edge connecting those two fathers, forms a cycle. Thus,
\[ f^\D(C\e^{-k})/C\leq \E\left [\frac{\1_{\prod_{i=1}^k  d'(\star_{2i-1},\star_{2i})\leq \e^k}}{\prod_{i=1}^k  d'(\star_{2i-1},\star_{2i})} \right ] . \]
The desired result then follows from the symmetry of the leaves $(\star_i)_{1\leq i \leq 2k}$. (That is the fact that permuting the label of the leaves of $T^\D$ independently of $T^\D$ does not change the law of $T^\D$.) 
\end{proof}
For the rest of the section $\e>0$, and $\D$ are fixed. We have to estimate $\prod_{i=1}^k d'(\star_{2i-1},\star_{2i})$. However, it is hard to estimate since it depends on $k$ separate parts of the tree. For this reason, we instead upper bound $g$ with the numbers of leaves in some open balls around $\star_0$. For every $n\geq 1$, let $M_n$ be the proportion of leaves $L\in T\backslash\{\star_0\}$ such that $2^{-n-1}< d'(\star_0,L)\leq 2^{-n}$ and let $M_0$ be the proportion such that $d'(\star_0,L)> 1/2$. Let $K_\e:=\inf\{n\in \N,2^{-n}\geq \e\}$. We have:
\begin{lemma} \label{bias1} There exists $C>0$ which depends only on $k$ such that,
\[ g(\e)\leq C \E\left [ \sum_{n=K_\e}^\infty 2^{nk} n^{3k}M^k_n \right ]^{1/k} \E\left [ \sum_{n=0}^\infty 2^{nk} n^{3k} M_n^k\right ]^{(k-1)/k}. \]
\end{lemma}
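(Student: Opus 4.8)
The plan is to bound each factor $d'(\star_{2i-1},\star_{2i})$ appearing in the denominator of $g(\e)$ from below by a quantity involving only the distances to the fixed root leaf $\star_0$, and then apply Hölder's inequality to decouple the $k$ cycles. First I would use the triangle inequality in the tree: for any two leaves $L,L'$, $d'(\star_0,L)\leq d'(\star_0,L')+d'(L',L)$, hence if $L$ lies in the annulus $\{2^{-n-1}<d'(\star_0,L)\leq 2^{-n}\}$ and $L'$ in the annulus indexed by $n'$ with (say) $n'\geq n+2$, then $d'(L,L')\geq d'(\star_0,L)-d'(\star_0,L')\geq 2^{-n-1}-2^{-n'}\geq 2^{-n-2}$; more simply, at least one of $L,L'$ is at distance $\geq \tfrac12\max(2^{-n-1},2^{-n'-1})$ from the other, so one always has $d'(\star_{2i-1},\star_{2i}) \geq c\, 2^{-\min(n_i,n_i')}$ up to a bounded factor, where $n_i,n_i'$ are the annulus indices of $\star_{2i-1},\star_{2i}$. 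Summing the indicator that $\star_{2i-1}$ lands in annulus $n_i$ over the leaves — the probability that a uniformly chosen leaf lands there is exactly $M_{n_i}$ (plus the contribution $M_0$ for the outermost region), by the symmetry of the leaf labels already invoked in Lemma \ref{MIAOUH1} — gives
\[
\E\left[\frac{\1_{d'(\star_1,\star_2)\leq \e}}{\prod_{i=1}^k d'(\star_{2i-1},\star_{2i})}\right] \leq C\,\E\left[\left(\sum_{n\geq K_\e} 2^{n} w_n M_n\right)\prod_{i=2}^{k}\left(\sum_{n\geq 0} 2^{n} w_n M_n\right)\right]
\]
for suitable polynomial weights $w_n$ (the $n^{3}$ factors); the first factor is restricted to $n\geq K_\e$ because $d'(\star_1,\star_2)\leq\e$ forces at least one of the two leaves into an annulus of index $\geq K_\e$.

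The polynomial weights $n^{3k}$ require a little care: they arise because the crude bound $d'(\star_{2i-1},\star_{2i})\geq c\,2^{-\min(n_i,n_i')}$ is lossy when the two leaves sit in comparable annuli, and one compensates by splitting the double sum over $(n_i,n_i')$ according to $|n_i-n_i'|$ and absorbing the resulting geometric-type series into a polynomial correction; alternatively one bounds $\min(n_i,n_i')$-indexed terms by summing $\sum_{n'} $ trivially, at the cost of a factor growing polynomially in $n$. Either way one reduces to a single sum $\sum_n 2^{n}n^{3}M_n$ (possibly raised to a power) per cycle, after which Hölder's inequality with exponents $(k, k/(k-1))$ — applied to the product of the distinguished first factor with the $k-1$ identical remaining factors — yields
\[
g(\e) \leq C\,\E\left[\Big(\sum_{n\geq K_\e} 2^{n}n^{3}M_n\Big)^k\right]^{1/k}\E\left[\Big(\sum_{n\geq 0} 2^{n}n^{3}M_n\Big)^k\right]^{(k-1)/k},
\]
and finally the power-mean (or Hölder again, on the counting measure with weights) inequality $\big(\sum_n a_n\big)^k \leq \big(\sum_n n^{-2}\big)^{k-1}\sum_n n^{2(k-1)} a_n^k$ pushes the $k$-th power inside each sum, converting $\big(\sum 2^n n^3 M_n\big)^k$ into $C\sum_n 2^{nk} n^{3k} M_n^k$ and absorbing the extra $\sum n^{-2}<\infty$ into the constant $C$. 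This gives exactly the claimed bound.

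The main obstacle, I expect, is the geometric bookkeeping in the first step — carefully justifying that one may replace $\prod_i d'(\star_{2i-1},\star_{2i})$ by a product of sums over annuli around $\star_0$ with only a polynomial-in-$n$ loss, and in particular controlling the case where the two endpoints of a cycle fall into nearby annuli (where the triangle inequality gives nothing) by using that $\star_0$ itself lies on, or close to, the relevant part of the tree, or by a more refined partition of the leaves. The subsequent manipulations (Hölder to decouple the cycles, power-mean to internalize the $k$-th power) are routine once the right-hand side has the product-of-sums form; the restriction of the first sum to $n\geq K_\e$ is immediate from the indicator $\1_{d'(\star_1,\star_2)\leq\e}$, and the appearance of $M_0$ as a genuine term (rather than $M_n$ for $n<0$) is just the convention that everything outside the unit ball is lumped together, which does not affect the estimate since those terms carry the bounded weight $2^{0}\cdot 0^{3}$... — more precisely one indexes that term separately with weight $O(1)$, harmless inside the constant.
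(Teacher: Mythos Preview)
Your Hölder steps (decoupling the $k$ cycles, then pushing the $k$-th power inside the sum) are exactly the paper's, and are fine. The gap is in your very first step, and you correctly identify it yourself as ``the main obstacle'' without resolving it: the lower bound $d'(\star_{2i-1},\star_{2i})\geq c\,2^{-\min(n_i,n_i')}$ is not merely lossy when the two leaves fall in the same (or adjacent) annulus around $\star_0$ --- it is simply false. Two leaves can both sit at rescaled distance $\approx 2^{-n}$ from $\star_0$ while being at rescaled distance as small as $2\lambda^\D$ from each other (think of two leaves sharing a father far from the root). No polynomial-in-$n$ weight can repair this, because the error is not controlled by $n$ at all.

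The paper sidesteps the issue entirely by a symmetry trick you are missing. After factoring the pairs conditionally on $T$ and applying Hölder, each factor is of the form
\[
\E\!\left[\ \E\!\left[\frac{1}{d'(L_{2i-1},L_{2i})}\ \Big|\ L_{2i-1},\,L_{2i-1}\neq L_{2i},\,T\right]^{\!k}\ \right],
\]
with $L_{2i-1},L_{2i}$ uniform leaves. Now use that $\star_0$ is itself a leaf and that the law of $T^\D$ is invariant under permutations of the leaf labels: averaging over $L_{2i-1}$ gives the same value as fixing $L_{2i-1}=\star_0$. Thus only the single distance $d'(\star_0,L_{2})$ remains, and the inner conditional expectation is bounded \emph{exactly} by $\sum_n 2^n M_n$, with no geometric comparison needed. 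This is equation \eqref{OUAFOUAF} in the paper; from there your second Hölder step finishes the proof. In short: do not try to bound $d'(\star_{2i-1},\star_{2i})$ via annuli around $\star_0$; instead, use leaf symmetry to \emph{replace} one endpoint by $\star_0$ before looking at annuli.
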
 
\begin{proof}  In this proof $C$ denotes a real depending only on $k$ which may vary from line to line. First, let $(L_i)_{1\leq i\leq 2k}$ be uniform random variables  in $\{\star_i\}_{0\leq i \leq N+1}$. Note that by symmetry of the leaves, 
\[ g(\e) = \E \left [ \left .   \frac{\1_{d'(L_{1},L_{2}) \leq \e}}{\prod_{i=1}^k d'(L_{2i-1},L_{2i})}  \right | \forall  i\neq j, L_i\neq L_j\right ]. \]
Then by roughly speaking slightly changing $(L_{i})_{1\leq i \leq 2k}$ such that some equalities may hold,
\begin{align*} g(\e) & \leq  C\E \left [\left . \frac{\1_{d'(L_{1},L_{2}) \leq \e}}{\prod_{i=1}^k d'(L_{2i-1},L_{2i})}\right | \forall 1\leq i \leq k, L_{2i-1}\neq L_{2i} \right ]. 
\\ & = C\E \left [ \E\left [ \left . \frac{\1_{d'(L_{1},L_{2}) \leq \e}}{d'(L_1,L_2)} \right | L_1,L_1\neq L_2, T \right ] \prod_{i=2}^{k} \E\left [  \left . \frac{1}{d'(L_{2i-1},L_{2i})} \right | L_{2i-1},L_{2i-1}\neq L_{2i},T \right ] \right ]. 
\end{align*}

Furthermore, by H\"older's inequality, and by symmetry of the leaves,
\begin{align} g(\e)^k&  \leq   C\E \left [ \E\left [ \left . \frac{\1_{d'(L_{1},L_{2}) \leq \e}}{d'(L_1,L_2)} \right | L_1,L_1\neq L_2,T \right ]^k \right ] \prod_{i=2}^{k} \E\left [\E\left [  \left . \frac{1}{d'(L_{2i-1},L_{2i})} \right | L_{2i-1},L_{2i-1}\neq L_{2i},T \right ]^k \right ] \notag
\\ & =  C\E \left [ \E\left [ \left . \frac{\1_{d'(\star_0,L_{2}) \leq \e}}{d'(\star_0,L_2)} \right |\star_0\neq L_2, T \right ]^k \right ]  \E\left [\E\left [  \left . \frac{1}{d'(\star_0,L_{2})} \right |\star_0\neq L_{2}, T \right ]^k \right ]^{k-1}. \notag \end{align}
Therefore, we have by definition of $(M_n)_{n\in \{0\}\cup \N}$,
\begin{equation} g^\D(\e)^k\leq C \E\left [ \left ( \sum_{i=K_\e}^\infty 2^n M_n \right )^k \right ] \E\left [ \left (\sum_{i=0}^\infty 2^n M_n\right )^k \right ]^{k-1}. \label{OUAFOUAF} \end{equation} 
If $k=1$ the desired results follow from \eqref{OUAFOUAF}. If $k\geq 2$ then we have a.s., by H\"older's inequality,
\[ \sum_{i=K_\e}^\infty 2^n M_n  \leq \left(\sum_{i=K_\e}^\infty \left ( 2^n n^3 M_n \right )^k \right )^{1/k} \left(\sum_{i=N}^\infty \left ( \frac{1}{n^3} \right )^{k/(k-1)} \right )^{(k-1)/k} , \]
and similarly for $\sum_{i=0}^\infty  2^n M_n$. And the desired result follows from \eqref{OUAFOUAF}. 
\end{proof}
Recall Section \ref{2.2}. We now upper bound for $n\in \N$, $\E[M_n^k]$ using Algorithm \ref{D-tree}. Recall the definition of $A^\D$. Let $Y_1, Y_2,\dots$ be the indexes such that $A^\D_i\in \{A^\D_1,\dots, A^\D_{i-1}\}$.
\begin{lemma} \label{masse d'une truite} For every $n\in \N$, 
\[ \E[M_n^k]\leq k^k \sum_{a=1}^k \frac{1}{N^{k-a}} \proba \left (Y_a\leq \frac{a}{2^n} \frac{s}{ \sigma} \right ).\]
\end{lemma}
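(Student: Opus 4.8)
\emph{The plan.} I would prove the bound in three moves: pass from the proportion $M_n$ to the number $Q_n$ of leaves lying in a small graph-ball around $\star_0$; expand $\E[Q_n^k]$ via the falling-factorial identity and use the exchangeability of the leaves of a $\D$-tree to reduce to the single event that the specific leaves $\star_1,\dots,\star_a$ lie in that ball; and finally read off from Algorithm \ref{D-tree} that this event forces $Y_a$ to be small. Concretely, write $d$ for the graph distance $d^\D$ of $T^\D$ and $d'=\lambda^\D d$ for its rescaling, and set $r_n:=2^{-n}/\lambda^\D$; since $\lambda^\D=\sigma^\D/\n^\D$, the condition $d'(\star_0,L)\le 2^{-n}$ reads $d(\star_0,L)\le r_n$ and $a r_n=\frac{a}{2^n}\frac{\n^\D}{\sigma^\D}$. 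As in the proof of Lemma \ref{bias1}, $M_n=\frac{1}{N^\D+1}\#\{L\ne\star_0\text{ a leaf of }T^\D:\ 2^{-n-1}<d'(\star_0,L)\le 2^{-n}\}$, so $(N^\D+1)M_n\le Q_n:=\#\{L\ne\star_0\text{ a leaf}:\ d(\star_0,L)\le r_n\}$. For a nonnegative integer $q$ one has $q^k=\sum_{a=1}^k c_{k,a}\binom{q}{a}$, where $c_{k,a}\le a^k\le k^k$ is the number of surjections of $\{1,\dots,k\}$ onto $\{1,\dots,a\}$; hence $Q_n^k\le k^k\sum_{a=1}^k\binom{Q_n}{a}$ and
\[ \E[M_n^k]\ \le\ \frac{k^k}{(N^\D+1)^k}\sum_{a=1}^k\E\!\left[\binom{Q_n}{a}\right]. \]
Now $\E[\binom{Q_n}{a}]$ is the expected number of $a$-element subsets of $\{\star_1,\dots,\star_{N^\D+1}\}$ contained in the ball $\{v:d(\star_0,v)\le r_n\}$. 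Since a $\D$-tree is uniform over the labelled trees with the prescribed degree sequence, its law is invariant under any permutation of the leaf labels fixing $\star_0$, so every such $a$-subset lies in the ball with the same probability $q_a:=\proba(d(\star_0,\star_j)\le r_n\text{ for all }1\le j\le a)$; thus $\E[\binom{Q_n}{a}]=\binom{N^\D+1}{a}q_a\le(N^\D+1)^a q_a$. As $a\le k$, $(N^\D+1)^{a-k}\le(N^\D)^{a-k}$, and we get $\E[M_n^k]\le k^k\sum_{a=1}^k(N^\D)^{a-k}q_a$; it therefore remains to show $q_a\le\proba(Y_a\le a r_n)$.

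\emph{The key estimate.} Here I use Algorithm \ref{D-tree}. The repeat times $Y_1<Y_2<\cdots$ (the steps $i$ with $A^\D_i\in\{A^\D_1,\dots,A^\D_{i-1}\}$) are precisely the steps at which the leaves $\star_1,\star_2,\dots$ are successively introduced, with $\star_a$ attached to $A^\D_{Y_a-1}$; this is meaningful for every $a\le k$ because $k<N^\D$, so $\star_a$ is created strictly before the forced final step $i=\n^\D$. Between step $Y_{a-1}$ and step $Y_a$ the algorithm merely appends a path of fresh vertices starting from a vertex already in the current tree, so an induction on $a$ gives that the tree $T^\D_{Y_a}$ produced after step $Y_a$ is exactly the subtree $T^\D(\{\star_0,\star_1,\dots,\star_a\})$. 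Since $T^\D_1$ has one edge and every later step adds exactly one edge, $T^\D_{Y_a}$ has exactly $Y_a$ edges, hence so does the span of $\{\star_0,\dots,\star_a\}$. On the event behind $q_a$ this span equals $\bigcup_{j=1}^a[\star_0,\star_j]$, a union of $a$ paths of lengths $d(\star_0,\star_j)\le r_n$, so it has at most $a r_n$ edges; therefore $Y_a\le a r_n$ on that event, i.e. $q_a\le\proba(Y_a\le a r_n)=\proba\!\left(Y_a\le\frac{a}{2^n}\frac{\n^\D}{\sigma^\D}\right)$, which with the previous display finishes the proof.

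\emph{Main obstacle.} The one point requiring genuine care is the identification $T^\D_{Y_a}=T^\D(\{\star_0,\dots,\star_a\})$, together with the bookkeeping that $\star_a$ is the $a$-th leaf created and $Y_a$ the $a$-th repeat time: one must check from the recursion in Algorithm \ref{D-tree} that, once $\star_{a-1}$ has been created, every vertex added before $\star_a$ is a genuinely new vertex sitting on the branch that ends at $\star_a$, so that no edge outside the span of $\{\star_0,\dots,\star_a\}$ is produced up to step $Y_a$. Everything else — the surjection identity, the union bound on edge counts, and the leaf-exchangeability of $\D$-trees — is routine.
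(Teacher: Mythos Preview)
Your proof is correct and follows essentially the same strategy as the paper: both reduce $\E[M_n^k]$ to a sum over $a\le k$ of $N^{a-k}$ times the probability that the specific leaves $\star_1,\dots,\star_a$ lie in the ball of rescaled radius $2^{-n}$ around $\star_0$, and then bound that probability via the identity that $T^\D_{Y_a}$ is the span of $\{\star_0,\dots,\star_a\}$ and has exactly $Y_a$ edges. The only cosmetic difference is your use of the surjection/Stirling identity $q^k=\sum_a c_{k,a}\binom{q}{a}$ in place of the paper's decomposition by the coincidence pattern of i.i.d.\ uniform leaves---these are the same combinatorics, and your version is arguably a little cleaner.
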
 
\begin{proof} First, let $(L_i)_{1\leq i\leq 2k}$ be uniform random variables  in $\{\star_i\}_{1\leq i \leq N+1}$. By definition of $M_n^k$,
\begin{equation*} \E[M_n^k] =\proba\left [\frac{1}{2^{n+1}} < d'(\star_0,L_1),\dots, d'(\star_0,L_k)\leq \frac{1}{2^n}\right ]. \end{equation*}
Then we want distinct leaves to use Algorithm \ref{D-tree}. To this end, we develop the right hand side above by distinguishing the cases of equality. Let $\mathfrak{P}(k)$ be the set of partition of $\{1,\dots, k\}$. 
For every $I=\{I_1,\dots, I_a\}\in \mathfrak{P}(k)$, let $\AAA_I$ be the event that for every $x,y \in \{1,\dots, k\}$, $L_x=L_y$ iff they are in the same $I_i$.  For every $I\subset \{1,\dots, k\}$ let $m_{I}:=\min(I)$.  We have,
\begin{align*}& \E\left [M_n^k \right ]  = \sum_{I=\{I_1,\dots, I_a\}\in \mathfrak{P}(k)} \proba \left [ \AAA_I \,, \,  \frac{1}{2^{n+1}} < d'(\star_0,L_1),\dots, d'(\star_0,L_k)\leq \frac{1}{2^n}\right ]
\\ & =\sum_{\{I_1,\dots, I_a\}\in \mathfrak{P}(k)}  \frac{1}{(N+1)^{k-a} } \proba \left [L_{m_{I_1}}\neq \dots \neq L_{m_{I_a}} \, , \, \frac{1}{2^{n+1}} < d'(\star_0,L_{m_{I_1}}),\dots, d'(\star_0,L_{m_{I_a}})\leq \frac{1}{2^n} \right ].  \end{align*}
Then by symmetry of the leaves,
\begin{equation*}  \E\left [M_n^k \right ]  =  \sum_{\{I_1,\dots, I_a\}\in \mathfrak{P}(k)}  \frac{1}{(N+1)^{k-a}} \proba \left [\frac{1}{2^{n+1}} <d'(\star_0,\star_1),\dots, d'(\star_0,\star_a)\leq \frac{1}{2^n}\right ]. \end{equation*}
So since there is at most $k^k$ partitions of $\{1,\dots, k\}$, 
\begin{equation}  \E\left [M_n^k \right ]  \leq k^k\sum_{a=1}^k  \frac{1}{N^{k-a}} \proba \left [\frac{1}{2^{n+1}} < d'(\star_0,\star_1),\dots, d'(\star_0,\star_a)\leq \frac{1}{2^n}\right ].\label{KProject} \end{equation}

Finally we use Algorithm \ref{D-tree}. It is direct from the construction that, writing $Y_0=0$,
\[ Y_a= \sum_{i=1}^a (Y_i-Y_{i-1})\leq  \sum_{i=1}^a (d(\star_0,\star_i)-1)\leq (\n/\s)\sum_{i=1}^a d'(\star_0,\star_i). \]
So the desired results follows from \eqref{KProject}.
\end{proof}
We now upper bound $Y_a$ using a part of the continuum $\D$-tree construction of \cite{Uniform}:
\begin{compactitem}
\item[-] Let $(X_i)_{1\leq i \leq \n}$ be a family of independent exponential random variables of parameter $(d_i/\sigma)_{1\leq i \leq \n}$.
\item[-] Let $\mu$ be the measure on $\R^+$ defined by $\mu=\sum_{i=1}^{\n} \delta_{X_i} \left (d_i-1 \right )/\sigma$. 
\item[-] Let $(\hat Y_i)_{i\in \N}$ be a Poisson point process  on $\R^+$ of rate $\mu[0,y]dy$.
\item[-] Let $(E_i)_{1\leq i \leq \n-1}$ be a family of exponential random variables of mean $(\sigma/(\n-i))_{1\leq i \leq \n-1}$.
\end{compactitem}
By \cite{Uniform} Lemma 10 there exists a coupling such that $Y_a$ is independent of $(E_i)_{1\leq i \leq \n-1}$ and such that a.s. $\sum_{i=1}^{Y_a} E_i\leq \hat Y_a$. Moreover, we have:
\begin{lemma}  \label{truecutbound} For every $a,n\in \N$ with $n\leq \n/2$,
\[ \proba \left (Y_a\leq n\right ) \leq  \proba (\hat Y_a\leq 4n\sigma/\n  )/2 .\]
\end{lemma}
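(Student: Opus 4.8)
The plan is to bound $\proba(Y_a\le n)$ by first conditioning on the value of $Y_a$, exploiting that $Y_a$ is independent of the increments $(E_i)_{1\le i\le \n-1}$, controlling the partial sum $\sum_{i=1}^{Y_a}E_i$ by a Markov estimate, and only then passing to $\hat Y_a$ via the a.s.\ coupling relation $\sum_{i=1}^{Y_a}E_i\le\hat Y_a$. Put $t:=4n\sigma/\n$.

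The sole role of the hypothesis $n\le\n/2$ is the following: for every $1\le i\le n$ one has $\n-i\ge\n/2$, hence $\E[E_i]=\sigma/(\n-i)\le 2\sigma/\n$, and therefore for every $m\le n$,
\[
\E\Big[\sum_{i=1}^{m}E_i\Big]=\sum_{i=1}^{m}\frac{\sigma}{\n-i}\le\frac{2m\sigma}{\n}\le\frac{2n\sigma}{\n}=\frac t2 .
\]
By Markov's inequality $\proba\big(\sum_{i=1}^{m}E_i>t\big)\le\tfrac12$ for each such $m$, and since $Y_a$ is independent of $(E_i)_i$, summing over the possible values of $Y_a$ gives
\[
\proba\Big(Y_a\le n,\ \sum_{i=1}^{Y_a}E_i>t\Big)=\sum_{m\le n}\proba(Y_a=m)\,\proba\Big(\sum_{i=1}^{m}E_i>t\Big)\le\tfrac12\,\proba(Y_a\le n),
\]
so that $\proba\big(Y_a\le n,\ \sum_{i=1}^{Y_a}E_i\le t\big)\ge\tfrac12\,\proba(Y_a\le n)$. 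It then remains to feed the event $\{\sum_{i=1}^{Y_a}E_i\le t\}$ into the coupling with $\hat Y_a$ to obtain an inequality of the form $\proba(Y_a\le n)\le C\,\proba(\hat Y_a\le 4n\sigma/\n)$, which rearranges to the statement (with the appropriate constant in place of $C$).

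The step I expect to be the main obstacle is precisely this last transfer, and the accompanying bookkeeping of constants. The relation $\sum_{i=1}^{Y_a}E_i\le\hat Y_a$ only controls the partial sum \emph{from below} by $\hat Y_a$, so to lower bound $\proba(\hat Y_a\le t)$ one must in addition control the discrepancy $\hat Y_a-\sum_{i=1}^{Y_a}E_i$ on the relevant event — this is where the finer form of \cite{Uniform} Lemma~10 (bounding $\hat Y_a$ by the partial sum with one extra increment, respectively by an independent residual) is needed. Sharpening the universal constant coming out of Markov to the value $\tfrac12$ demanded by the statement is then a matter of replacing the Markov estimate on $\sum_{i=1}^{m}E_i$ by a Chernoff-type tail bound for a sum of independent exponentials, which concentrates. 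Everything else is the elementary conditioning/independence computation above, with the hypothesis $n\le\n/2$ entering only through $\E[E_i]\le 2\sigma/\n$.
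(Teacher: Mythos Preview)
Your approach is essentially the paper's, and in fact your argument is already complete once a sign issue is cleared up. The paper's own proof runs exactly as you do: show $\proba\big(\sum_{i=1}^{n}E_i\le 4n\sigma/\n\big)\ge\tfrac12$, use independence of $Y_a$ and $(E_i)$ to factor, then transfer via the coupling. Two remarks.

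First, the ``main obstacle'' you flag is not there. The coupling from \cite{Uniform} Lemma~10, \emph{as actually used in the paper's proof}, gives the domination in the opposite direction to what is written in the text preceding the lemma: one has $\hat Y_a\le\sum_{i=1}^{Y_a}E_i$ a.s.\ (the printed ``$\sum_{i=1}^{Y_a}E_i\le\hat Y_a$'' is a typo). With that direction, $\{\sum_{i=1}^{Y_a}E_i\le t\}\subset\{\hat Y_a\le t\}$, and your chain of inequalities already yields $\proba(\hat Y_a\le t)\ge\tfrac12\proba(Y_a\le n)$, i.e.\ $\proba(Y_a\le n)\le 2\,\proba(\hat Y_a\le 4n\sigma/\n)$. (The factor $2$ in the displayed statement of the lemma is on the wrong side; the proof actually establishes the bound with the $2$ multiplying, not dividing.) No residual term or Chernoff sharpening is needed: your Markov estimate $\E[\sum_{i\le m}E_i]\le t/2$ already gives the constant $\tfrac12$.

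Second, a cosmetic simplification: instead of summing over all values $m\le n$ of $Y_a$, the paper simply uses that on $\{Y_a\le n\}$ one has $\sum_{i=1}^{Y_a}E_i\le\sum_{i=1}^{n}E_i$, so it suffices to control the single fixed sum $\sum_{i=1}^{n}E_i$ and factor by independence.
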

\begin{proof} Fix $n\leq \n/2$. It is easy to check from basic estimates on the Gamma distribution that,
\begin{align*} \proba\left ( \sum_{i=1}^{n} E_i \leq  4 n(\sigma/\n) \right  )  \geq 1/2.  \end{align*}
So since $Y_a$ and $(E_i)_{1\leq i \leq \n-1}$ are independent,
\begin{equation*} \proba \left (\hat Y_a\leq 4n\frac{\sigma}{\n} \right )  \geq \proba \left (\sum_{i=1}^{Y_a} E_i \leq 4n\frac{\sigma}{\n} \right ) \geq \proba \left (Y_a \leq n, \sum_{i=1}^{n} E_i \leq 4n\frac{\sigma}{\n} \right  )\geq  \frac{1}{2} \proba(Y_a \geq n). \qedhere \end{equation*}
\end{proof}
Hence, to upper bound $Y_a$ it is enough to upper bound $\hat Y_a$. To this end, we first upper bound $\mu$.
\begin{lemma} \label{dumb} For every $a\in \N$,  \begin{compactitem}
\item[ (a)] For every $x,t>0$, $\proba( \mu[0,x]> t) \leq x/t$.
\item[(b)] For every $0\leq x\leq 1\leq t$, $\proba( \mu[0,x]> t) \leq e^{-t/4}$.
\end{compactitem}
\end{lemma}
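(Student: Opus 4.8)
The plan is to derive both parts from elementary moment estimates for the sum of independent nonnegative random variables
\[
\mu[0,x]\;=\;\sum_{i=1}^{\n}\frac{d_i-1}{\sigma}\,\1_{X_i\le x},
\]
whose $i$-th term lies in $[0,(d_i-1)/\sigma]$ and is identically $0$ unless $d_i\ge 2$. Two inequalities do all the work: first, $1-e^{-u}\le u$ for $u\ge 0$, which gives $\proba(X_i\le x)=1-e^{-xd_i/\sigma}\le xd_i/\sigma$; second, $(d_i-1)/\sigma\le 1$ whenever $d_i\ge 2$, since $(d_i-1)^2\le d_i(d_i-1)\le\sigma^2$ (recall $\sigma^2=\sum_j d_j(d_j-1)$).

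For (a), I would simply compute the first moment and invoke Markov's inequality. Indeed,
\[
\E\bigl[\mu[0,x]\bigr]=\sum_{i=1}^{\n}\bigl(1-e^{-xd_i/\sigma}\bigr)\frac{d_i-1}{\sigma}\;\le\;\frac{x}{\sigma^2}\sum_{i=1}^{\n}d_i(d_i-1)\;=\;x,
\]
so that $\proba(\mu[0,x]>t)\le\E[\mu[0,x]]/t\le x/t$.

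For (b), the heart of the matter is a Chernoff bound. By independence, for any $\lambda>0$,
\[
\E\bigl[e^{\lambda\mu[0,x]}\bigr]=\prod_{i=1}^{\n}\Bigl(1+\bigl(1-e^{-xd_i/\sigma}\bigr)\bigl(e^{\lambda(d_i-1)/\sigma}-1\bigr)\Bigr),
\]
and I would bound each factor using $1-e^{-xd_i/\sigma}\le xd_i/\sigma$, the chord bound $e^{\lambda q}-1\le q(e^{\lambda}-1)$ valid for $q=(d_i-1)/\sigma\in[0,1]$, and $1+v\le e^{v}$, obtaining a factor at most $\exp\!\bigl(\tfrac{x\,d_i(d_i-1)}{\sigma^2}(e^{\lambda}-1)\bigr)$; hence $\E[e^{\lambda\mu[0,x]}]\le\exp(x(e^{\lambda}-1))\le\exp(e^{\lambda}-1)$ since $x\le 1$. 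This yields $\proba(\mu[0,x]>t)\le\exp(-\lambda t+e^{\lambda}-1)$ for all $\lambda>0$, and taking $\lambda=\ln t$ (legitimate since $t\ge 1$) gives $\proba(\mu[0,x]>t)\le e^{-t\ln t+t-1}$, which is at most $e^{-t/4}$ once $t\ge t_0$ for some explicit absolute $t_0$ (because $t\ln t-\tfrac54 t+1\to+\infty$). For the remaining bounded range $1\le t\le t_0$, the first-moment bound of (a) already gives $\proba(\mu[0,x]>t)\le x/t\le 1/t\le e^{-t/4}$ whenever $\ln t\ge t/4$; the last short interval of $t$ just above $1$ I would treat separately, via the monotonicity $\mu[0,x]\le\mu[0,1]$, the variance bound $\operatorname{Var}(\mu[0,x])\le\tfrac14\sum_{i:d_i\ge 2}\bigl((d_i-1)/\sigma\bigr)^2\le\tfrac14$ (again from $(d_i-1)/\sigma\le 1$ and $\sum_{i:d_i\ge 2}(d_i-1)^2\le\sum_i d_i(d_i-1)=\sigma^2$), together with the observation that $\{\mu[0,1]>1\}$ forces at least two summands to be active since no single one reaches $1$.

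The routine parts are (a) and the Chernoff bound for large $t$. The one genuinely delicate point is the regime of $t$ close to $1$ in (b): there the exponential-moment method is vacuous (one is probing deviations around, or even below, the mean $\E[\mu[0,x]]$, which can itself be arbitrarily close to $1$), so the boundedness of the individual summands must be exploited directly — through $\operatorname{Var}(\mu[0,x])\le 1/4$ and the impossibility of a single dominant term — to keep $\proba(\mu[0,x]>1)$ comfortably below $e^{-1/4}$.
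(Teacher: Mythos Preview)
Part (a) is exactly the paper's argument: compute $\E[\mu[0,x]]\le x$ via $1-e^{-u}\le u$ and $\sigma^2=\sum d_i(d_i-1)$, then apply Markov.

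For (b), the paper's proof is one sentence: $\mu[0,x]$ is a sum of independent random variables each bounded by $1$, so the tail bound follows from Bernstein's inequality (\cite{Massart}, Section~2.8). Your exponential-moment computation is the same mechanism---Bernstein/Bennett are precisely the refined Chernoff bounds that exploit the uniform bound $1$ on each summand together with the variance proxy $v\le \E[\mu[0,x]]\le x\le 1$---so in substance you and the paper agree. The paper simply names the off-the-shelf inequality; you rederive a Poisson-type Chernoff bound by hand and then split into three $t$-ranges.

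That splitting is where your write-up has a gap. Your bound $e^{-t\ln t+t-1}$ and the Markov bound $1/t$ together cover only $t\gtrsim 1.43$; for $t\in[1,1.43)$ the sketch you offer (variance $\le 1/4$, ``at least two summands must be active'') is not converted into an actual inequality. Chebyshev is useless there because $\E[\mu[0,x]]$ can sit arbitrarily close to $1$, and the two-summand observation by itself does not force $\proba(\mu[0,1]>1)\le e^{-1/4}$. The gap is largely cosmetic, since the only downstream use of (b) (proof of Lemma~\ref{falsecutbound}) integrates $e^{-t/4}$ from $t=-4\log x\ge 36$, where any sub-exponential tail with a reasonable constant suffices; still, if you want the statement exactly as written, it is cleaner to invoke Bernstein/Bennett directly (as the paper does) rather than patch three regimes and leave the hardest one as a heuristic.
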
 
\begin{proof} Note that by definition of $\mu$, $(X_i)_{1\leq i \leq \n}$ and $\sigma$,
\begin{equation*} \E[\mu[0,x]]=\sum_{i=1}^\s \frac{d_i-1}{\sigma} \proba(X_i>x)\leq \sum_{i=1}^\s \frac{d_i-1}{\sigma} \frac{xd_i}{\sigma} \leq x.  \end{equation*}
So (a) follows from Markov's inequality. Also $\mu[0,x]$ is a sum of independent random variables bounded by 1 so (b) follows from Bernstein's inequality (see \cite{Massart} Section 2.8).
%
%
%
\end{proof} 
\begin{lemma} \label{falsecutbound} For every $a\in \N$ and $0\leq x\leq e^{-9}$, $\proba (\hat Y_a\leq x ) \leq 3x^{a+1}(-4a \log x)^a$.
\end{lemma}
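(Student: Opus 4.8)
The quantity $\hat Y_a$ is the $a$-th point of a Poisson point process on $\R^+$ whose intensity is $\mu[0,y]\,dy$ for the random locally finite measure $\mu$. The plan is to first condition on $\mu$, derive a clean deterministic bound for $\proba(\hat Y_a \le x \mid \mu)$ in terms of the random variable $\mu[0,x]$, and then integrate out $\mu$ using the tail estimates from Lemma \ref{dumb}. Conditionally on $\mu$, the event $\{\hat Y_a \le x\}$ requires at least $a$ Poisson points in $[0,x]$; since the intensity is nondecreasing in $y$, the mean number of points in $[0,x]$ is at most $\int_0^x \mu[0,y]\,dy \le x\,\mu[0,x]$. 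Writing $\Lambda := x\,\mu[0,x]$ and using the crude Poisson bound $\proba(\text{Poisson}(\lambda)\ge a) \le \lambda^a/a! \le \lambda^a$, we get $\proba(\hat Y_a \le x \mid \mu) \le (x\,\mu[0,x])^a$.

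Next I would take expectations. Introduce a threshold $t>0$ and split according to whether $\mu[0,x] \le t$ or not:
\[
\proba(\hat Y_a \le x) \le \E\!\left[(x\,\mu[0,x])^a \1_{\mu[0,x]\le t}\right] + \proba(\mu[0,x] > t) \le (xt)^a + \proba(\mu[0,x]>t).
\]
For $0 \le x \le e^{-9} \le 1$ and $t \ge 1$, Lemma \ref{dumb}(b) gives $\proba(\mu[0,x]>t) \le e^{-t/4}$, so
\[
\proba(\hat Y_a \le x) \le (xt)^a + e^{-t/4}.
\]
Now optimize over $t$: the natural choice is to take $t$ so that $e^{-t/4}$ is comparable to $x^{a+1}$, i.e. $t = -4(a+1)\log x$ (which is $\ge 1$ since $x \le e^{-9}$ forces $-\log x \ge 9$). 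With this choice $e^{-t/4} = x^{a+1}$, and $(xt)^a = x^a\big(-4(a+1)\log x\big)^a \le x^a (-8a\log x)^a$ for $a\ge 1$; after a small adjustment of constants (bounding $a+1 \le 2a$ and absorbing the resulting factor), one checks $(xt)^a \le 2 x^{a+1}(-4a\log x)^a$ for $x$ small enough, say $x \le e^{-9}$, because the extra factor $x^{-1}$ dominates any fixed power-of-constant overhead once $x$ is that small. Combining the two terms yields $\proba(\hat Y_a \le x) \le 3 x^{a+1}(-4a\log x)^a$.

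The only slightly delicate point is the bookkeeping of the absolute constants to land exactly on the stated bound $3x^{a+1}(-4a\log x)^a$: one must choose the threshold $t$ and the range $x \le e^{-9}$ compatibly so that both the polynomial-in-$\log x$ factor and the leading constant $3$ come out right, uniformly in $a$. I expect this constant-chasing — rather than any conceptual difficulty — to be the main nuisance; everything else is the elementary Poisson tail bound together with Lemma \ref{dumb}(b). One should also double-check the harmless edge case $x=0$, where the claimed bound reads $0 \le 0$ and indeed $\proba(\hat Y_a \le 0) = 0$ since the Poisson intensity vanishes at the origin.
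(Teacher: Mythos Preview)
Your argument has a genuine gap: the threshold split $\proba(\hat Y_a\le x)\le (xt)^a+e^{-t/4}$ only produces a bound of order $x^a(\log(1/x))^a$, not $x^{a+1}(\log(1/x))^a$. Concretely, with $t=-4(a+1)\log x$ you get $(xt)^a=x^a\bigl(-4(a+1)\log x\bigr)^a$, and the ratio
\[
\frac{(xt)^a}{x^{a+1}(-4a\log x)^a}=\frac{1}{x}\Bigl(\tfrac{a+1}{a}\Bigr)^a\ge \frac{2}{x}\ge 2e^{9}
\]
for $x\le e^{-9}$, so the claimed inequality $(xt)^a\le 2x^{a+1}(-4a\log x)^a$ is false in the direction you need; smaller $x$ makes it worse, not better. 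The missing factor of $x$ is exactly what you lose by bounding $\mu[0,x]$ crudely by the threshold $t$ on the event $\{\mu[0,x]\le t\}$, ignoring that $\E[\mu[0,x]]\le x$ (this is part (a) of Lemma~\ref{dumb}, which you never use).

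The paper does not split at a threshold but instead bounds $\E[\mu[0,x]^a]$ directly via the layer-cake formula
\[
\E[\mu[0,x]^a]=\int_0^\infty \proba(\mu[0,x]\ge t)\,a t^{a-1}\,dt,
\]
cutting the integral at $t=x$ and at $t=-4\log x$, and applying Lemma~\ref{dumb}(a) on $[x,-4\log x]$ and Lemma~\ref{dumb}(b) on $[-4\log x,\infty)$. The Markov-type bound $\proba(\mu[0,x]\ge t)\le x/t$ in the middle range is precisely what furnishes the extra factor of $x$, yielding $\E[\mu[0,x]^a]\le 3x(-4a\log x)^a$ and hence the stated $3x^{a+1}(-4a\log x)^a$ after multiplying by $x^a$. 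This extra power of $x$ is not cosmetic: it is needed downstream so that $\sum_n 2^{nk}n^{3k}\E[M_n^k]$ converges.
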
 
\begin{proof} 

By definition of $( \hat Y_i)_{i\in \N}$, conditionally on $\mu$, $\max \{i\in \N, Y_i\leq x\}$ is a Poisson random variable of mean $\int_0^x \mu[0,t]dt \leq x\mu[0,x]$. So, by basic inequalities on the Poisson distributions,
\begin{equation} \proba(\hat Y_a\leq x)= \E[ \proba(\hat Y_a\leq x|\mu) ] \leq \E[(x\mu[0,x])^a ]. \label{2310} \end{equation}

Then we have by integration by part and Lemma \ref{dumb}, 
\begin{align*} \E[\mu[0,x]^a] & = \int_0^\infty \proba(\mu[0,x] \geq t) (a t^{a-1} dt)
\\ &  \leq \int_0^x a t^{a-1} dt+ \int_x^{-4\log x} (x/t) (a t^{a-1} dt) + \int_{-4 \log x}^\infty e^{-t/4} (a t^{a-1} dt)
\\ & \leq 3x(-4a \log x)^a,
 \end{align*}
using basic calculus for the last inequality. This concludes the proof.
 \end{proof}
 \begin{proof}[Proof of Proposition \ref{MainMain}] We now complete our upper bound for $f(\D)$. In this proof, $c, C$ denote reals which depend only on $k$ and which may vary from line to line. First by Lemmas \ref{falsecutbound} and \ref{truecutbound} we have for every $1\leq a \leq k$ and $0\leq x <1/16$,
 \[  \proba \left (Y_a\leq x\n/\s \right )\leq Cx^{a+1}(-\log(x))^c. \]
 
 Then by Lemma \ref{masse d'une truite}, and $2N\geq \n/\s$, for every $n\in \N$ with $(\n/\s)/2^n \geq 1$, 
 \begin{align} \E[M_n^k] & \leq  k^k \sum_{a=1}^k \frac{1}{N^{k-a}} \proba \left (Y_a\leq \frac{a}{2^n} \frac{s}{ \sigma} \right )\notag
 \\ & \leq k^k \sum_{a=1}^k \left (\frac{2\s}{\n } \right )^{k-a} C \left ( \frac{a}{2^n} \right)^{a+1} n^c  \notag
 \\ & \leq \frac{Cn^c}{2^{(k+1)n}}. \label{rhume d'enfant}  \end{align}
 Note that \eqref{rhume d'enfant} naturally extends to the $n\in \N$ with $(\n/\s)/2^n < 1$ since for those $n$ almost surely for $1\leq a \leq k$, we have $Y_a\geq a >\frac{a}{2^n} \frac{s}{ \sigma}$.
 
 Next, since $K_\e=\inf\{n\in \N,2^{-n}\geq \e\}$,
 \[  \E\left [ \sum_{i=K_\e}^\infty 2^{nk} n^{3k}M^k_n \right ] \leq C2^{-K_\e c} \leq C(2\e)^c, \]
 and 
  \[  \E\left [ \sum_{i=0}^\infty 2^{nk} n^{3k}M^k_n \right ] \leq C.\]
Thus by Lemma \ref{bias1}, 
\begin{equation} g^\D(\e)\leq C\E\left [ \sum_{i=0}^\infty 2^{nk} n^{3k}M^k_n \right ]^{1/k} \E\left [ \sum_{i=K_\e}^\infty 2^{nk} n^{3k}M^k_n \right ]^{(k-1)/k}  \leq C (2\e)^c. \label{BlueArchive} \end{equation}
Finally, Proposition \ref{MainMain} follows from Lemma \ref{MIAOUH1}.
 \end{proof}
 Along the way by \eqref{BlueArchive} we have the following result, which we extend in the next section.
 \begin{lemma} \label{BlueArchive2} There exists $c,C>0$ which depends only on $k$ such that for every $\e>0$, $\D\in \OmegaD$ with $N^\D\geq \max(2k,\n^\D/(2\s^\D))$, $g^\D(\e)\leq C \e^c$.
 \end{lemma}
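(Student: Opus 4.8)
The plan is to observe that Lemma~\ref{BlueArchive2} is not a genuinely new statement but a reformulation, for later reuse, of the estimate \eqref{BlueArchive} that has just been obtained in passing during the proof of Proposition~\ref{MainMain}. Indeed, that proof establishes, under the hypothesis $N^\D\geq \max(2k,\n^\D/(2\s^\D))$ --- which is precisely the hypothesis of the present lemma --- the inequality $g^\D(\e)\leq C(2\e)^c$ with $c,C>0$ depending only on $k$. Since $(2\e)^c=2^c\e^c$, it suffices to replace $C$ by $2^cC$ to obtain $g^\D(\e)\leq C\e^c$, so that no fresh argument is required; the lemma merely isolates a bound already in hand.

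For the record, the skeleton of the argument behind \eqref{BlueArchive}, which I would simply cite rather than redo, is the chain: Lemma~\ref{MIAOUH1} uses $\square_{\square,k}\leq (k+1)!2^k/\prod_i\square_i$, the elementary cycle-length bound $\square_i(T^\D)\geq d(\star_{2i-1},\star_{2i})/2$, and the symmetry of the leaves to reduce everything to $g^\D$; Lemma~\ref{bias1} applies H\"older's inequality twice to replace the $k$ pairwise distances $d'(\star_{2i-1},\star_{2i})$ by the rescaled ball-masses $M_n$ around $\star_0$; Lemma~\ref{masse d'une truite} controls $\E[M_n^k]$ through the repetition indices $Y_a$ of Algorithm~\ref{D-tree}; and Lemmas~\ref{truecutbound}, \ref{dumb}, \ref{falsecutbound} bound $Y_a$ via the continuum $\D$-tree description, yielding \eqref{rhume d'enfant}, i.e. $\E[M_n^k]\leq Cn^c2^{-(k+1)n}$, whose resummation over $n\geq K_\e$ produces the factor $(2\e)^c$.

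Consequently there is no real obstacle here: the only thing to be careful about is bookkeeping --- that the constants are the ones depending only on $k$ and that the admissible range of $\D$ matches the one under which \eqref{BlueArchive} was derived. The substantive work, and the place where a genuinely new argument will be needed, is the \emph{extension} announced immediately after the lemma, namely dispensing with the auxiliary hypothesis $N^\D\geq \n^\D/(2\s^\D)$ so as to cover degree sequences with many vertices of degree $2$; it is that extension, not Lemma~\ref{BlueArchive2} itself, that the next section will have to carry out.
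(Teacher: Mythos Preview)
Your reading is correct: the paper itself presents Lemma~\ref{BlueArchive2} as an immediate consequence of \eqref{BlueArchive}, stating just before the lemma that ``along the way by \eqref{BlueArchive} we have the following result.'' Your proposal matches this exactly, and your remark that absorbing the factor $2^c$ into $C$ is all that is needed is the right bookkeeping.
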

 \subsection{Proof of Proposition \ref{MainMainMain} when there are many vertices of degree 2}  \label{Bias2}
 This section is organized as follow. We first detail how to remove or add vertices of degree 2. We then prove from those constructions a connection between the $\D$-trees that do not have any vertice of degree 2 and the others. Finally we use this connection to prove Proposition \ref{MainMainMain}.

 First for every graph $G$ and $x\in G$, we call $x$ an edgepoint if $x$ have degree 2. 
 A simple way to remove the edgepoints is to shortcut them: Formally if $T=(V,E)$ is a tree, then $\nabla T$ be the tree $(V',E')$ such that $V'=\{v\in V,\deg_T(v)\neq 2\}$ and for every $v,w\in V'$, $\{v,w\}\in E'$ iff there exists a path between $v$ and $w$ that only pass by $v$, $w$ and vertices of degree $2$. Note that $\nabla$ keep the degrees: for every $v\in T$ with $\deg_v(T)\neq 2$, we have $v\in \nabla T$ and $\deg_T(v)=\deg_{\nabla T}(v)$. 
 \begin{remark} One may extends $\nabla$ to general graph. However, the natural way to preserves the degrees is to work with multigraph. We avoid this issue by working with trees.
 \end{remark}
 
  
 Reciprocally one may construct any tree by adding some edgepoints along the oriented edges of a tree without edgepoint: For every $T=(V,E)$ let $( \vec e_i(T))_{1\leq i \leq \# E}$ be some fixed oriented edges of $T$ such that each edge of $E$ appears in one and only one direction. Let $((W_{i,j})_{1\leq i \leq r_i})_{1\leq i \leq \# E}$ be some vertices that are not in $V$. For every $1\leq i \leq \#E$ let  $(W_{i,0},W_{i,r_i+1}):= \vec e_i(G)$. Let
 \[ \Delta(T,((W_{i,j})_{1\leq i \leq r_i})_{1\leq i \leq \# E}:=\left (V\cup \{W_{i,j}\}_{1\leq i \leq \#E, 1\leq j \leq r_i}, \{ \{W_{i,j},W_{i,j+1}\} \}_{1\leq i \leq \#E, 0\leq j \leq r_i} \right ).  \]

 We now use $\Delta, \nabla$ to study $\D$-trees. Beforehand, let us introduce some notations. For every $\D=(d_1,\dots, d_\n)\in \Omega_\D$, let $\n_{\geq 2}^\D:=\#\{a\in \N, d_a\geq 2\}$, let $\n_{\geq 1}^\D=\#\{a\in \N, d_a\geq 1\}$, and let $\n^\D_1:=\#\{a\in \N, d_a=1\}$. Also let $\nabla \D$ be the sequence $(d_1,d_2,\dots, d_{\n_{\geq 2}}, d_{\n_{\geq 1}+1},\dots, d_\n)$. 
 
  Also we say that $((W_{i,j})_{1\leq j \leq r_i})_{1\leq i \leq n}$ is an ordered partition of size $n\in \N$ of a finite set $E$ iff for $1\leq i \leq n$, $r_i\in \{0\} \cup \N$, and $(i,j)\mapsto W_{i,j}$ is a bijection from $\{1\leq i \leq n, 1\leq j \leq r_i\}$ to $E$.
We have the following connections between $\D$-trees and $\nabla \D$-trees:
 
  \begin{lemma} \label{Nabla1} Let $\D\in \OmegaD$. Let $W$ be a  uniform ordered partition of size $\n^{\nabla \D}-1$ of $\{V_i\}_{i:d^\D_i=1}$. Then,
a) $\nabla (T^\D)$ is a $\nabla \D$-tree, and b) $\Delta(T^{\nabla \D},W)$ is a $\D$-tree 
\end{lemma}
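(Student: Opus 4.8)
The plan is to prove both parts by a direct symmetry argument, combining the stick-breaking construction of Algorithm \ref{D-tree} with the elementary combinatorics of the operations $\nabla$ and $\Delta$. The key structural observation is that $\nabla$ and $\Delta(\cdot, W)$ — once the ordered partition $W$ is fixed — are mutually inverse bijections between an appropriate set of "shortcut" trees and the set of $\D$-trees, and that this correspondence is degree-preserving by the remark following the definition of $\nabla$.

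For part (a), I would argue as follows. Recall that a $\D$-tree is uniform on the set $\T^\D$ of trees with vertex set $\{V_i\}_{i:d_i>0}\cup\{\star_i\}_{0\le i\le N^\D+1}$ and prescribed degrees. Since $\nabla$ removes exactly the degree-$2$ vertices — i.e. the $V_i$ with $d_i=1$, and no leaves or branch points — the image $\nabla(T)$ for $T\in\T^\D$ always lies in $\T^{\nabla\D}$, the analogous set for the degree sequence $\nabla\D$ (here one uses that $\nabla\D$ simply deletes the degree-$1$ entries and relabels, and that $N^{\nabla\D}=N^\D$, so the leaf labels match up). It then suffices to show that for each fixed $S\in\T^{\nabla\D}$ the number of $T\in\T^\D$ with $\nabla(T)=S$ is independent of $S$. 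But $\nabla^{-1}(\{S\})$ is exactly the set of trees obtained by subdividing the $\#E(S)$ edges of $S$ with the $\n^\D_1$ degree-$1$ vertices $\{V_i\}_{i:d_i=1}$, in every possible way: choosing how many subdividing vertices go on each oriented edge and in which order along that edge. This count is the number of ordered partitions of $\{V_i\}_{i:d_i=1}$ into $\#E(S)=\n^{\nabla\D}-1$ blocks, which depends only on $\n^\D_1$ and $\n^{\nabla\D}$, not on $S$. Hence $\nabla(T^\D)$ is uniform on $\T^{\nabla\D}$, i.e. a $\nabla\D$-tree. (Alternatively, one can run Algorithm \ref{D-tree} and observe that contracting degree-$2$ vertices corresponds to deleting the corresponding letters from the uniform $\D$-tuple, which yields a uniform $\nabla\D$-tuple; either route works.)

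For part (b), I would use (a) together with the bijectivity just established. Fix $S$ a $\nabla\D$-tree and let $W$ be a uniform ordered partition of size $\n^{\nabla\D}-1$ of $\{V_i\}_{i:d_i=1}$, independent of $S$. By the counting in part (a), for fixed $S$ the map $W\mapsto \Delta(S,W)$ is a bijection from the set of such ordered partitions onto $\nabla^{-1}(\{S\})\subset\T^\D$; since $W$ is uniform, $\Delta(S,W)$ is uniform on $\nabla^{-1}(\{S\})$. Now take $S=T^{\nabla\D}$: by (a) this is uniform on $\T^{\nabla\D}$, and conditionally on $S$ the tree $\Delta(S,W)$ is uniform on the fibre $\nabla^{-1}(\{S\})$, whose cardinality does not depend on $S$. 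Since the fibres $\{\nabla^{-1}(\{S\}):S\in\T^{\nabla\D}\}$ partition $\T^\D$, the mixture is uniform on $\T^\D$, i.e. $\Delta(T^{\nabla\D},W)$ is a $\D$-tree. Here one must double-check the edge/vertex bookkeeping: the ordered edges $(\vec e_i(S))_{1\le i\le\#E(S)}$ are a fixed deterministic choice, so no over- or under-counting arises, and $\Delta(S,W)$ does produce a tree with the correct labelled vertex set and prescribed degrees (the subdividing vertices have degree $2$, the rest keep their degrees from $S$).

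The main obstacle, and the only place requiring genuine care, is the exact matching of \emph{labelled} vertex sets between the $\D$-world and the $\nabla\D$-world: one must verify that $N^{\nabla\D}=N^\D$ so that the leaf labels $\{\star_i\}$ are identical in both, that the surviving $V_i$'s are precisely $\{V_i\}_{i:d_i\ge 2}$ (which the definition of $\nabla\D$ as $(d_1,\dots,d_{\n_{\ge2}},d_{\n_{\ge1}+1},\dots,d_\n)$ is designed to ensure), and that $\Delta$ uses \emph{exactly} the vertices $\{V_i\}_{i:d_i=1}$ as subdividing points, consuming all of them. Granting this bookkeeping, both statements reduce to the single fact that $|\nabla^{-1}(\{S\})|$ — a count of ordered partitions of a fixed set into a fixed number of blocks — is independent of $S$, which is immediate.
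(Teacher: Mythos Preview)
Your argument is correct and rests on the same key fact as the paper's proof: the map $\Delta$ is a bijection from pairs (tree on $\nabla\D$, ordered partition of $\{V_i\}_{i:d_i=1}$ of size $\n^{\nabla\D}-1$) onto trees on $\D$, equivalently that all fibres $\nabla^{-1}(\{S\})$ have the same cardinality. The only difference is the logical order: the paper proves (b) first directly from this bijection, and then deduces (a) in one line from the identity $\nabla\circ\Delta=\mathrm{id}$ (if $\Delta(T^{\nabla\D},W)$ is a $\D$-tree, applying $\nabla$ gives back $T^{\nabla\D}$, so $\nabla$ sends a $\D$-tree to a $\nabla\D$-tree). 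You instead prove (a) first by the fibre count and then (b); note however that in your proof of (b) the appeal ``by (a) this is uniform on $\T^{\nabla\D}$'' is unnecessary---$T^{\nabla\D}$ is a $\nabla\D$-tree by definition, not by (a)---so your (b) really only uses the constant-fibre-size observation, exactly as the paper does.
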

\begin{proof} First note that $\nabla ( \Delta(T^{\nabla \D},W))=T^{\nabla \D}$, since this tree is obtained by adding some edgepoint on $T^{\nabla \D}$, which do not have edgepoint, then by removing all edgepoint. So b) imply a). 

Toward b), simply note that $\Delta$ may be seen as a bijection from trees with degree sequence $\nabla \D$ and ordered partition of size $s^{\nabla \D}-1$ of  $\{V_i^\D\}_{d_i=1}$ toward trees with degree sequence $\D$. (Indeed, one may recover the initial tree by applying $\Delta$ and then read the ordered partition by, roughly speaking, following each oriented edges of the initial tree on the image tree.)
\end{proof} 
We now prove Proposition \ref{MainMainMain}. To this end, it is enough to remove the assumption $2N^\D\geq  \n^\D/\s^\D$ of Proposition \ref{MainMain}. Note that it is satisfied when $s_{1}^\D=0$ since in this case, $\sigma^\D\geq 1$ and $\n^\D=N^\D+\n_{\geq 2}^\D\leq 2N^\D$. For this reason, our goal for the rest of the section will be to prove the following result, which together with Lemmas \ref{BlueArchive2} and \ref{MIAOUH1} yields Proposition \ref{MainMainMain}.

\begin{proposition} \label{MainMain'} Recall the definition of $g$ from Lemma \ref{MIAOUH1}. There exists $C>0$, which depends only on $k$, such that for every $\D\in \OmegaD$ with $N^\D\geq 2k$ and $\e>0$, 
\[ g^\D(\e)\leq C\e \left  (\int_{\e}^1 g^{\nabla \D}(\delta)/\delta^2 d\delta+kg^{\nabla \D}(1)+1 \right ). \]
\end{proposition}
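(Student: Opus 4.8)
The goal is to relate the bias $g^\D$ when $\D$ has many degree-$2$ vertices to the bias $g^{\nabla\D}$ of the shortcut sequence, for which Lemma~\ref{BlueArchive2} already gives a good bound. The bridge is Lemma~\ref{Nabla1}: a $\D$-tree can be realized as $\Delta(T^{\nabla\D},W)$, where $W$ is a uniform ordered partition of the degree-$1$ vertices of $\D$ along the edges of $T^{\nabla\D}$. So the plan is to write $g^\D(\e)=\E[\1_{d'^{\D}(\star_1,\star_2)\le\e}/\prod_{i=1}^k d'^{\D}(\star_{2i-1},\star_{2i})]$ directly in terms of the $\nabla\D$-tree together with the random subdivision of its edges, and then integrate out the subdivision.

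\textbf{Step 1: compare the two rescalings.} First I would compare $\lambda^\D$ and $\lambda^{\nabla\D}$. Since $\sigma^\D=\sigma^{\nabla\D}$ (the degree-$1$ and degree-$0$ vertices contribute nothing to $\sum d_i(d_i-1)$) and $\n^\D\ge\n^{\nabla\D}$, we have $\lambda^\D\le\lambda^{\nabla\D}$; the point of subdivision is precisely that $d^\D$ can be much larger than $d^{\nabla\D}$ on the spanned subtree, but after rescaling the picture stabilizes. More precisely, on the subtree of $T^\D$ spanned by $\{\star_0,\dots,\star_{2k}\}$, each edge of $T^{\nabla\D}(\{\star_0,\dots,\star_{2k}\})$ of $\nabla\D$-graph-length $\ell$ gets replaced by a path whose $\D$-graph-length is $\ell$ plus the number of degree-$2$ vertices of $\D$ that $W$ happens to place on that edge. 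I would record that each pair distance satisfies
\[ \lambda^\D d^{\D}(\star_a,\star_b) \;=\; \lambda^\D d^{\nabla\D}(\star_a,\star_b) \;+\; \lambda^\D \cdot (\text{number of inserted edgepoints on the }\star_a\!-\!\star_b\text{ path}), \]
so that $d'^\D(\star_a,\star_b)\ge (\lambda^\D/\lambda^{\nabla\D})\, d'^{\nabla\D}(\star_a,\star_b)$, and the first (bias-relevant) distance $d'^\D(\star_1,\star_2)$ can be controlled from above by $d'^{\nabla\D}(\star_1,\star_2)$ plus a contribution from the inserted edgepoints.

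\textbf{Step 2: integrate out the subdivision.} The key probabilistic input is the law of $W$: it is a uniform ordered partition of the $\n_1^\D$ degree-$1$ vertices into $\#E(T^{\nabla\D})$ blocks. Conditionally on $T^{\nabla\D}$, the number of edgepoints landing on the spanned subtree $T^{\nabla\D}(\{\star_0,\dots,\star_{2k}\})$, and in particular on the $\star_1$--$\star_2$ path, is (approximately) binomial/hypergeometric with parameters governed by the ratio (length of that path in $T^{\nabla\D}$)/(total length of $T^{\nabla\D}$), times $\n_1^\D$. After rescaling by $\lambda^\D$, the inserted length on the $\star_1$--$\star_2$ path is of order $\lambda^\D \n_1^\D \, d'^{\nabla\D}(\star_1,\star_2)/(\lambda^{\nabla\D}\cdot\text{total length})$; using that $\lambda^{\nabla\D}\cdot(\text{total rescaled length of }T^{\nabla\D})$ is typically of constant order, this says the inserted rescaled length on the path is comparable to $d'^{\nabla\D}(\star_1,\star_2)$ up to constants — i.e. subdivision multiplies the short distance by a bounded random factor, but can only lengthen it. So the event $\{d'^\D(\star_1,\star_2)\le\e\}$ forces $\{d'^{\nabla\D}(\star_1,\star_2)\le C\e\}$ up to the fluctuation of that factor; and the denominator $\prod_i d'^\D(\star_{2i-1},\star_{2i})$ is bounded below by a constant times $\prod_i d'^{\nabla\D}(\star_{2i-1},\star_{2i})$. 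Carrying this through, the expectation defining $g^\D(\e)$ is bounded by $C$ times an integral over the scale $\delta$ at which $d'^{\nabla\D}(\star_1,\star_2)$ sits, weighted by the density of the multiplicative fluctuation of the inserted length; this is exactly where the $\int_\e^1 g^{\nabla\D}(\delta)/\delta^2\,d\delta$ shape comes from (the $1/\delta^2$ being $d\delta/\delta$ against the $1/d'(\star_1,\star_2)\sim 1/\delta$ weight), with the boundary terms $kg^{\nabla\D}(1)$ and the additive $1$ absorbing the regime where $d'^{\nabla\D}(\star_1,\star_2)$ is already of order $1$ or where the spanned subtree of $T^{\nabla\D}$ receives an atypically large share of edgepoints.

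\textbf{Main obstacle.} The delicate point is Step~2: making rigorous the claim that subdividing the edges of $T^{\nabla\D}$ via the uniform ordered partition $W$ dilates distances on the spanned subtree by a factor that is bounded above and below in probability, uniformly in $\D$. This requires (i) controlling the total (rescaled) length of $T^{\nabla\D}$ from above and below with enough uniformity — plausibly via the already-available estimates on $\D$-trees from \cite{Uniform}, or via a direct second-moment/concentration argument on $\sum_i E_i$-type quantities — and (ii) controlling the number of the $\n_1^\D$ unit masses that fall on a given sub-path, which is a hypergeometric concentration estimate but must be integrated against the (possibly small) length of that path. The bookkeeping of changing variables from the $\D$-scale to the $\nabla\D$-scale inside the $k$-fold product, keeping all constants depending only on $k$, is the part that needs the most care; everything else is an application of Lemma~\ref{Nabla1}, symmetry of the leaves, and Fubini.
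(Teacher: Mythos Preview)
Your overall plan --- realize $T^\D$ as $\Delta(T^{\nabla\D},W)$ via Lemma~\ref{Nabla1}(b) and integrate out the subdivision $W$ --- is the same as the paper's. But your Step~2 has a genuine gap, and your ``Main obstacle'' misidentifies where the difficulty actually lies.

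\textbf{The real obstacle: dependence between the $k$ factors.} You write that ``the denominator $\prod_i d'^\D(\star_{2i-1},\star_{2i})$ is bounded below by a constant times $\prod_i d'^{\nabla\D}(\star_{2i-1},\star_{2i})$'' after averaging over $W$. But the $k$ paths $\llbracket \star_{2i-1},\star_{2i}\rrbracket$ in $T^{\nabla\D}$ can overlap, so the inserted lengths $\sum_{e\in\Ee_i}(1+S_e)$ share the same random variables $S_e$ on the shared edges. You cannot split the expectation of the product into a product of expectations. The paper addresses this with Lemma~\ref{Nabla3}: it produces \emph{disjoint} edge-sets $\Ee'_i\subset\Ee_i$ with $\#\Ee'_i\geq\max(\#\Ee_i/k,2)$, and then lower-bounds each distance by the disjoint sum $\sum_{e\in\Ee'_i}(1+S_e)$. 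Only then do the $k$ factors decouple. Your proposal never mentions this, and without it the argument does not go through.

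\textbf{Your obstacle is a red herring.} You worry about controlling the total rescaled length of $T^{\nabla\D}$ and about hypergeometric concentration for counts landing on a path. The paper needs neither. Lemma~\ref{Nabla2} dominates the block sizes $(R_e)_{e\in\Ee}$ from below by i.i.d.\ geometrics $(S_e)$ of mean $\n_1^\D/\#\Ee$, conditioned on $\sum S_e\leq \n_1^\D$; the conditioning is then removed at the cost of the constant $1/\proba(\sum S'_e\leq\n_1^\D)$. After that, each $\sum_{e\in\Ee'_i}(1+S'_e)$ is a sum of $\geq 2$ independent $(1+\text{geometric})$'s, and Lemma~\ref{Concentretoi} gives directly
\[
\E\!\left[\frac{1}{\sum_{e\in\Ee'_i}(1+S'_e)}\right]\leq \frac{2e}{\#\Ee'_i\,(1+\n_1^\D/\#\Ee)},
\qquad
\E\!\left[\frac{\1_{\sum\leq \e/\lambda^\D}}{\sum_{e\in\Ee'_1}(1+S'_e)}\right]\leq \frac{2e}{\#\Ee'_1(1+\n_1^\D/\#\Ee)}\min\!\Big(1,\tfrac{e\,\e/\lambda^\D}{\#\Ee'_1(1+\n_1^\D/\#\Ee)}\Big).
\]
No global length of $T^{\nabla\D}$ enters; everything is edge-local.

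\textbf{How the $\int_\e^1 g^{\nabla\D}(\delta)/\delta^2\,d\delta$ appears.} It is not a probabilistic statement of the form ``$d'^\D\leq\e$ forces $d'^{\nabla\D}\leq C\e$''. After the bounds above, and using $\#\Ee'_i\geq\#\Ee_i/k=d^{\nabla\D}(\star_{2i-1},\star_{2i})/k$ together with $1+\n_1^\D/\#\Ee\geq \lambda^\D/\lambda^{\nabla\D}$, the $i=1$ factor becomes $\min(1,\e/(\lambda^{\nabla\D}d^{\nabla\D}(\star_1,\star_2)))$ up to constants. The integral is then just the identity $\min(1,\e/x)=\e\int_\e^\infty \1_{x\leq\delta}\,\delta^{-2}\,d\delta$, followed by Fubini to bring the expectation inside and recognize $g^{\nabla\D}(\delta)$. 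The $kg^{\nabla\D}(1)+1$ term comes from the trivial bound $g^{\nabla\D}(\delta)\leq 1+kg^{\nabla\D}(1)$ for $\delta\geq 1$, not from any tail event of the subdivision.

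In short: the skeleton is right, but you are missing Lemma~\ref{Nabla3} (disjointification to get independence), and the paper's edge-by-edge geometric domination (Lemmas~\ref{Nabla2} and~\ref{Concentretoi}) is both simpler and sharper than the global hypergeometric route you propose.
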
 
To this end, it is enough to lower bound $(d^\D(\star_{2i-1},\star_{2i}))_{1\leq i \leq k}$ using $(d^{\nabla\D}(\star_{2i-1},\star_{2i}))_{1\leq i \leq k}$. To do so, by Lemma \ref{Nabla1} (b), it suffices to study uniform ordered partitions. More precisely, we have to lower bound the cardinal of the sets of those partitions, which corresponds to the numbers of edgepoint added on each edge. This is done in the following lemma.

\begin{lemma}  \label{Nabla2}Let $((W_{i,j})_{1\leq j \leq R_i})_{1\leq i \leq n}$ be a uniform ordered partition of size $n$ of a finite set $E$.
\begin{compactitem}
\item[(a)] $(R_i)_{1\leq i\leq n}$ is uniform among all set of integers such that $\sum_{i=1}^n R_i=\# E$.
\item[(b)] Let $(S_i)_{1\leq i\leq n}$ be independent geometric random variables of mean $\#E/n$ conditioned on $\sum_{i=1}^n S_i\leq \# E$. Then there exists a coupling between $(R_i)_{1\leq i \leq n}$ and $(S_i)_{1\leq i\leq n}$ such that almost surely for every $1\leq i \leq n$, $R_i\geq S_i$ .
\end{compactitem} 
\end{lemma}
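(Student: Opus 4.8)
The plan is to prove part (a) by a direct combinatorial identification and then part (b) by the standard balls-in-boxes/geometric coupling.

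For (a), I would argue that an ordered partition of a finite set $E$ into $n$ blocks (some possibly empty) of the given form is determined by two independent pieces of data: first the tuple of block sizes $(R_i)_{1\leq i\leq n}$ with $\sum_{i=1}^n R_i=\#E$, and second, for a fixed size profile, the arrangement of the elements of $E$ into the blocks respecting that profile. The number of ways to complete the second piece of data is $\prod_{i=1}^n R_i!$ — wait, that is not constant, so I must be careful here. The cleaner route: an ordered partition $((W_{i,j})_{1\le j\le R_i})_{1\le i\le n}$ is exactly a way of arranging the $\#E$ elements of $E$ in a sequence together with $n-1$ "bars" separating consecutive blocks (a composition-with-labels). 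Equivalently, it is a bijection from $\bigsqcup_{i=1}^n\{1,\dots,R_i\}$ to $E$; for a fixed size profile $(R_i)_i$ there are exactly $(\#E)!$ such bijections (just list $E$ in order, block by block). Hence the uniform measure on all such partitions, when pushed forward to the size profile, is uniform on $\{(R_i)_{1\le i\le n}\in(\N\cup\{0\})^n:\sum R_i=\#E\}$, because each profile has the same number $(\#E)!$ of realizations. That proves (a).

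For (b), I would use the classical coupling between a uniform point on the discrete simplex $\{\sum R_i=\#E\}$ and i.i.d.\ geometric variables conditioned to have sum at most $\#E$. Concretely, let $(S_i)_{1\le i\le n}$ be i.i.d.\ geometric (on $\{0,1,2,\dots\}$) with mean $\#E/n$ conditioned on $\sum_i S_i\le \#E$; the conditional law of $(S_i)_i$ given $\sum_i S_i=\#E$ is exactly uniform on the discrete simplex (this is the memorylessness/exchangeability property of geometrics: the conditional law given the sum does not depend on the parameter, and a quick computation shows it is uniform). So I would take $(R_i)_i$ to be $(S_i)_i$ further conditioned on $\sum_i S_i=\#E$. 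One then checks that this extra conditioning can be realized by only adding mass: starting from $(S_i)_i$ with $\sum_i S_i=:M\le\#E$, distribute the deficit $\#E-M$ among the coordinates (e.g.\ increment coordinates one at a time, chosen appropriately) to obtain a configuration with sum exactly $\#E$ and with uniform law on the simplex, while never decreasing any coordinate. Since the "add the deficit uniformly at random" procedure — picking, for each of the $\#E-M$ missing units, a coordinate by an appropriate rule — yields the uniform distribution on the simplex and is monotone, the coupling $R_i\ge S_i$ holds almost surely.

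\textbf{Main obstacle.} The delicate point is (b): one must produce an \emph{explicit monotone} coupling, i.e.\ verify that the "fill-in the deficit" map sends the conditioned-geometric law to the uniform-on-simplex law while being coordinatewise nondecreasing. The cleanest way is probably to invoke the standard fact that if $(S_i)$ are i.i.d.\ geometric then $(S_i\mid \sum S_i = \#E)$ is uniform on the simplex, and then use the monotonicity of the map $M\mapsto$ (uniform on simplex of sum $\#E$) obtained by sequentially conditioning on larger total sums; each such step stochastically increases every coordinate by an FKG/monotone-coupling argument on the simplex, and composing these monotone couplings with the trivial identity coupling at $M=\#E$ gives the result. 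This bookkeeping — making precise "add the deficit" as a genuine almost-sure domination rather than merely stochastic domination — is the part that requires care, though it is entirely routine.
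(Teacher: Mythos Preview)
Your argument is correct and matches the paper's approach exactly. For (a) you arrive at the same observation the paper makes in one line: for each size profile $(R_i)_i$ there are precisely $(\#E)!$ ordered partitions realizing it, so the pushforward to the profile is uniform. For (b) the paper gives no details at all (it literally writes ``(b) is an easy exercise''), so your sketch---condition the geometrics on their sum, use that $(S_i\mid\sum S_i=m)$ is uniform on the simplex of sum $m$, and then fill in the deficit $\#E-M$ monotonically---already goes further than the paper. Your identified ``main obstacle'' is real but minor: the explicit monotone step from the uniform composition of $m$ to that of $m+1$ can be realized concretely via stars-and-bars (insert one new star uniformly among the $m+n$ gaps of the current word of $m$ stars and $n-1$ bars; a one-line count shows the result is uniform on compositions of $m+1$, and no block decreases), and iterating gives the coupling.
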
 
\begin{proof} Toward (a), simply note that given $(R_i)_{1\leq i\leq n}$, there are exactly $\# E!$ possible ways to label $((W_{i,j})_{1\leq j \leq R_i})_{1\leq i \leq n}$ to form an ordered partition of size $n$ of $E$. Then (b) is an easy exercise.
\end{proof}
Next, in order to use the independency of Lemma \ref{Nabla2} (b), we will use the following lemma:
\begin{lemma} \label{Nabla3}
Let $T$ be a tree. Assume that $(\star_i)_{1\leq i \leq 2k}$ are leaves of $T$. For every $1\leq i \leq k$ let $\mathcal{E}_i$ be the set of edges that are on the minimal path between $\star_{2i-1}$ and $\star_{2i}$. Then there exists $(\mathcal E'_i)_{1\leq i \leq k}$ disjoint subsets of $(\mathcal{E}_i)_{1\leq i \leq k}$ such that for every $1\leq i \leq k$, $\# \mathcal E'_i\geq \max(\#\mathcal E_i/k,2)$.
\end{lemma}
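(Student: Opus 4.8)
The plan is to obtain the sets $\mathcal{E}'_i$ as the blocks of a \emph{partition} of $\mathcal{U}:=\bigcup_{i=1}^k\mathcal{E}_i$ into subsets $\mathcal{E}'_1,\dots,\mathcal{E}'_k$ with $\mathcal{E}'_i\subseteq\mathcal{E}_i$ and $\#\mathcal{E}'_i\ge d_i:=\lceil \#\mathcal{E}_i/k\rceil$; since $\#\mathcal{E}'_i$ is an integer, $\#\mathcal{E}'_i\ge \#\mathcal{E}_i/k$ is equivalent to $\#\mathcal{E}'_i\ge d_i$. The reason to insist on a genuine partition, rather than merely disjoint subsets, is an observation that hands us the ``$\ge 2$'' half of the claim for free: for each $i$ the two edges of $T$ incident to the leaves $\star_{2i-1}$ and $\star_{2i}$ are distinct (two leaves of a tree are never adjacent, barring the degenerate case $T$ a single edge, which never occurs when the lemma is used), both lie on $\mathcal{E}_i$, and lie on no other $\mathcal{E}_j$. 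Hence in any such partition these two edges must fall into $\mathcal{E}'_i$, so automatically $\#\mathcal{E}'_i\ge 2$, whence $\#\mathcal{E}'_i\ge\max(d_i,2)\ge\max(\#\mathcal{E}_i/k,2)$, which is exactly what is wanted.

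I would then reduce the existence of such a partition to a numerical condition via Hall's theorem. Consider the bipartite graph with left part $\mathcal{U}$, right part consisting of $d_i$ copies of each index $i$, and an edge joining $e\in\mathcal{U}$ to every copy of $i$ with $e\in\mathcal{E}_i$; a matching saturating the right part yields (after assigning each still-unused edge of $\mathcal{U}$ arbitrarily to one of the $\mathcal{E}_i$ containing it) a partition with $\#\mathcal{E}'_i\ge d_i$. Since all copies of a given $i$ have the same neighbourhood $\mathcal{E}_i$, Hall's condition for this matching is precisely
\[ \Big|\bigcup_{i\in S}\mathcal{E}_i\Big|\ \ge\ \sum_{i\in S}d_i \qquad\text{for every }S\subseteq\{1,\dots,k\}. \]

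The remaining step, and the one genuinely using the tree/leaf structure, is to verify this inequality. For $k=1$ it is trivial, so assume $k\ge 2$ and fix $S$. For $i\in S$ let $\mathcal{F}_i$ be $\mathcal{E}_i$ with its two $\star$-incident edges removed, so $\#\mathcal{F}_i=\#\mathcal{E}_i-2$ and $\bigcup_{i\in S}\mathcal{E}_i$ is the disjoint union of $\bigcup_{i\in S}\mathcal{F}_i$ together with the $2\#S$ pairwise distinct $\star$-incident edges of the paths indexed by $S$. Subadditivity of the ceiling and $\lceil 2/k\rceil=1$ give $d_i\le \lceil\#\mathcal{F}_i/k\rceil+1\le \#\mathcal{F}_i/k+2$; summing over $i\in S$ and using that each edge of $T$ lies in at most $k$ of the $\mathcal{F}_i$ (so $\sum_{i\in S}\#\mathcal{F}_i\le k\,|\bigcup_{i\in S}\mathcal{F}_i|$) yields
\[ \sum_{i\in S}d_i\ \le\ \tfrac1k\sum_{i\in S}\#\mathcal{F}_i+2\#S\ \le\ \Big|\bigcup_{i\in S}\mathcal{F}_i\Big|+2\#S\ =\ \Big|\bigcup_{i\in S}\mathcal{E}_i\Big|. \]
The only real obstacle is making this last chain close, and it does so exactly because the two private pendant edges of each path supply the extra $2\#S$ on the left of the union bound that is needed to absorb the rounding loss of $+2$ per index incurred when passing from $\#\mathcal{E}_i/k$ to $\lceil\#\mathcal{F}_i/k\rceil+1$; the accounting is tight but elementary. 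One should also note in passing the harmless degenerate cases ($k=1$, or $T$ a single edge), which do not occur in the applications of the lemma.
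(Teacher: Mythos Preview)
Your proof is correct. The observation that the two pendant edges of the $i$-th path are private to $\mathcal{E}_i$ (hence forced into $\mathcal{E}'_i$ in any partition of $\mathcal{U}$ with $\mathcal{E}'_i\subseteq\mathcal{E}_i$) is exactly the mechanism that yields the ``$\ge 2$'' bound, and your verification of Hall's condition is clean: stripping the pendant edges to form $\mathcal{F}_i$ isolates the $2\#S$ contribution precisely matching the $+2$ rounding slack, and the crude multiplicity bound $\sum_{i\in S}\#\mathcal{F}_i\le k\,|\bigcup_{i\in S}\mathcal{F}_i|$ suffices. The degenerate single-edge case is indeed excluded in the paper's applications (there $T^{\nabla\D}$ has at least $2k+2\ge 4$ leaves), and the paper's own argument has the same implicit restriction.

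The paper takes a different and more elementary route: it initializes each $\mathcal{E}'_i$ with its two pendant edges and then runs a round-robin greedy allocation, adding to each $\mathcal{E}'_i$ in turn an arbitrary still-unassigned edge of $\mathcal{E}_i$, until $\bigcup_i\mathcal{E}'_i=\bigcup_i\mathcal{E}_i$. The bound $\#\mathcal{E}'_i\ge\#\mathcal{E}_i/k$ then follows by a short counting argument at the first round where $\mathcal{E}'_i$ fails to grow (at that moment $\mathcal{E}_i$ is already covered, and each $\mathcal{E}'_j$ for $j\ne i$ has received at most $\#\mathcal{E}'_i-1$ non-pendant edges that could lie in $\mathcal{E}_i$). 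Your Hall-type argument is more systematic and makes explicit why the combinatorics close (the pendant edges exactly absorb the rounding loss), at the cost of invoking a nontrivial theorem; the paper's greedy construction is self-contained but leaves the final enumeration to the reader.
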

\begin{proof} Consider the following informal construction of $(\mathcal E'_i)_{1\leq i \leq k}$:
\begin{compactitem}
\item[-] First let for $1\leq i \leq k$, $\mathcal E'_i:= \{ \{\star_{2i-1},F_{2i-1}\}, \{\star_{2i},F_{2i}\}\}$, where for $1\leq i \leq 2k$, $F_{i}$ is the father of $\star_i$ in $T$.
\item[-] Then while $\bigcup_{i=1}^k \mathcal E'_i \neq \bigcup_{i=1}^k \mathcal E_i$: 
\begin{compactitem}
\item[-] For $1\leq i \leq k$: If possible add to $\mathcal E'_i$ an arbitrary edge in $\Ee_i$ that is not yet in $\bigcup_{j=1}^k \mathcal E'_i$.
\end{compactitem}
\end{compactitem} 
It is easy to check that $(\mathcal E'_i)_{1\leq i \leq k}$ are disjoint subsets of $(\mathcal{E}_i)_{1\leq i \leq k}$. Also for $1\leq i \leq k$, $\# \mathcal E'_i \geq 2$. Finally a quick enumeration gives that at the end of the algorithm $\#\mathcal E'_i \geq \#\mathcal{E}_i/k$. 
\end{proof} 
\begin{proof}[Proof of Proposition \ref{MainMain'}.] Let $\e>0$. Let $\D\in \OmegaD$. Let $W$ be a uniform ordered partition of size $\n_{\geq 2}^\D$ of $\{V_i^\D\}_{i:d_i=1}$ and independent of $T^{\nabla \D}$. Let $d^{\nabla D, W}$ be the graph distance on $\Delta(T^{\nabla \D},W)$. Then by Lemma \ref{Nabla1} (b), $\Delta(T^{\nabla \D},W)$ is a $\D$-tree. So, by definition of $g$, it is enough upper bound
\begin{equation} G^{\D}(\e,T^{\nabla \D}):=\E\left [\left . \frac{\1_{\lambda^\D d^{\nabla \D, W}(\star_{1},\star_{2})\leq \e}}{\prod_{i=1}^k\left (\lambda^\D d^{\nabla \D, W}(\star_{2i-1},\star_{2i})\right )} \right | T^{\nabla \D}\right ] . \label{3011.17h} \end{equation}
 
To this end, let us use Lemmas \ref{Nabla2} and \ref{Nabla3}. Let $\Ee$ be the set of edges of $T^{\nabla \D}$. Let $(S_e)_{e\in \Ee}$ be independent geometric random variables of mean $\n_{1}^\D/\# \Ee$ conditioned on $\sum_{e\in \Ee} S_i\leq s^\D_1$.
For $1\leq i \leq k$ let $\Ee_i$ be the set of edges that are on the minimal path between $\star_{2i-1}$ and $\star_{2i}$ in $T^{\nabla \D}$. By definition of $\Delta$, and by Lemma \ref{Nabla2}, note that, there exists a coupling between $W$ and $(S_e)_{e\in \Ee}$ such that a.s. for $1\leq i \leq 2k$,
\begin{equation} d^{\nabla D, W}(\star_{2i-1},\star_{2i}) \geq \sum_{e\in \Ee_i}(1+S_e). \label{3011.18h}\end{equation}

Then,  by Lemma \ref{Nabla3}, let  $(\mathcal E'_i)_{1\leq i \leq k}$ be disjoint subsets of $(\mathcal{E}_i)_{1\leq i \leq k}$ such that for every $1\leq i \leq k$, $\# \mathcal E'_i\geq \max(\#\mathcal E_i/k,2)$. It directly follows from \eqref{3011.18h} that a.s. for $1\leq i \leq 2k$, 
\begin{equation*} d^{\nabla D, W}(\star_{2i-1},\star_{2i}) \geq \sum_{e\in \Ee'_i}(1+S_e) . \label{3011.18hb}\end{equation*}
Therefore, 
\[G^{\D}(\e,T^{\nabla \D})\leq \E\left [\left . \frac{\1_{\lambda^\D\sum_{e\in \Ee'_1}(1+S_e)\leq \e}}{\prod_{i=1}^k \left (\lambda^\D\sum_{e\in \Ee'_i}(1+S_e)\right )} \right | T^{\nabla \D}\right ] . \]

Hence, if $(S'_e)_{e\in \Ee}$ are independent geometric random variables of mean $\n_{1}^\D/\# \Ee$,
\[G^{\D}(\e,T^{\nabla \D})\leq \frac{1}{\proba\left (\left . \sum_{e\in \Ee} S'_e\leq \n_1^\D\right | T^{\Delta \D} \right )}\E\left [\left . \frac{\1_{\lambda^\D\sum_{e\in \Ee'_1}(1+S'_e)\leq \e}}{\prod_{i=1}^k \left (\lambda^\D\sum_{e\in \Ee'_i}(1+S'_e)\right )} \right | T^{\nabla \D}\right ]. \]
Then note that there exists a constant $C<\infty$ that does not depends on $k,\D$ such that a.s. 
$\proba\left (\left . \sum_{e\in \Ee} S'_e\leq \n_1^\D\right | T^{\Delta \D} \right ) \leq 1/C$. 
So, since $(\mathcal E'_i)_{1\leq i \leq k}$ are disjoint and $(S'_e)_{e\in \Ee}$ are independent,
\[G^{\D}(\e,T^{\nabla \D})\leq C\left (\lambda^\D\right )^{-k}\E\left [\left . \frac{\1_{\sum_{e\in \Ee'_1}(1+S'_e)\leq \e/\lambda^\D}}{\sum_{e\in \Ee'_1}(1+S'_e)} \right | T^{\nabla \D}\right ] \prod_{i=2}^k \E\left [\left . \frac{1}{\sum_{e\in \Ee'_i}(1+S'_e)} \right | T^{\nabla \D}\right ] . \]
Therefore we have using Lemma \ref{Concentretoi} below, and the fact that for every $1\leq i \leq k$, $\# \mathcal E'_i\geq 2$,
\begin{equation}G^{\D}(\e,T^{\nabla \D})\leq C(2e)^{-k}\left (\lambda^\D\right )^{-k} \min\left (1,\frac{e\e/\lambda^\D}{\#\Ee'_1(1+\n_{1}^\D/\# \Ee)}\right ) \prod_{i=1}^k \frac{1}{\#\Ee'_i (1+\n_{1}^\D/\# \Ee)}. \label{0112.11h} \end{equation}

Next, let us rewrite \eqref{0112.11h}. First, note that for every $1\leq i \leq k$,
\[ \#\Ee'_i \geq \#\Ee_i/k=d^{\nabla \D}(\star_{2i-1},\star_{2i})/k. \]
 Also, 
\[ 1+\frac{\n_{1}^\D}{\# \Ee}= 1+\frac{\n_{1}^\D}{\n^{\nabla \D}-1}=\frac{\n^{\nabla \D}+\n_{1}^\D-1}{\n^{\nabla \D}-1}=\frac{\n^{\D}-1}{\n^{\nabla \D}-1}\geq  \frac{\n^{\D}}{\n^{\nabla \D}}=\frac{\lambda^\D}{\lambda^{\nabla \D}},\]
noting for the last equality that $\sigma^\D=\sigma^{\nabla\D}$. Then by elementary calculus it is easy to prove that,
\begin{align*}  \min\left (1,\frac{e\e/\lambda^\D}{\#\Ee'_1(1+\n_{1}^\D/\# \Ee)}\right )& \leq ke \min\left (1,\frac{\e}{\lambda^{\nabla \D} d^{\nabla \D}(\star_1,\star_2)}\right ) 
\\ &=  ke \e \int_{\e}^\infty \1_{\lambda^{\nabla \D}d^{\nabla \D}(\star_1,\star_2)) \leq \delta} \frac{d\delta}{\delta^2}.
\end{align*}
Therefore by  \eqref{0112.11h},
\begin{equation*}G^{\D}(\e,T^{\nabla \D})\leq C(2ek)^{k+1}\left (\lambda^{\nabla \D}\right )^{-k}  \e \int_{\e}^\infty \1_{\lambda^{\nabla \D}d^{\nabla \D}(\star_1,\star_2) \leq \delta} \frac{d\delta}{\delta^2} \prod_{i=1}^k \frac{1}{d^{\nabla \D}(\star_{2i-1},\star_{2i})}. \end{equation*}
 
 Finally by taking the expectation and by Fubini's theorem, we have,
\[ \E[G^{\D}(\e,T^{\nabla \D})] \leq C(2ek)^{k+1}\left (\lambda^{\nabla\D}\right )^{-k}  \e \int_{\e}^\infty \E\left [ \1_{\lambda^{\nabla \D}d^{\nabla \D}(\star_1,\star_2)\leq \delta} \prod_{i=1}^k \frac{1}{d^{\nabla \D}(\star_{2i-1},\star_{2i})} \right ]  \frac{d\delta}{\delta^2}, \] 
which yields by definition of $G$ and $g$,
\begin{equation} g^\D(\e) \leq C(2ek)^{k+1}  \e \int_{\e}^\infty g^{\nabla \D}(\delta)  \frac{d\delta}{\delta^2}. \label{WOUAFWOUAF} \end{equation}

To conclude the proof, note that for $\delta\geq 1$,
\begin{align*} g^{\nabla \D}(\delta) & \leq \E\left [ \prod_{i=1}^k \frac{1}{\lambda^{\nabla \D}d^{\nabla \D}(\star_{2i-1},\star_{2i})} \right ] 
\\ & \leq \E\left [1+ \sum_{j=1}^k  \1_{\lambda^{\nabla\D}d^{\nabla \D}(\star_{2j-1},\star_{2j}) \leq 1} \prod_{i=1}^k \frac{1}{\lambda^\D d^{\nabla \D}(\star_{2i-1},\star_{2i})} \right ]=1+kg^{\nabla \D}(1).  \end{align*}
So the desired result follows from \eqref{WOUAFWOUAF}.
%
\end{proof}

\begin{lemma} \label{Concentretoi} Let $n\geq 2$, $m \geq 0$. Let $(S_i)_{1\leq i \leq n}$ be independent geometric random variables of mean $m$. Then,
\[ \E\left [\frac{1}{\sum_{i=1}^n (1+S_i)} \right ]\leq \frac{2e}{n(1+m)}. \] 
Also, for every $\e>0$,
\[ \E \left [\frac{\1_{\sum_{i=1}^n (1+S_i)\leq \e}}{\sum_{i=1}^n (1+S_i)} \right ]\leq \frac{2e}{n(1+m)} \min\left (1,\frac{e\e}{(1+m)n}\right ). \] 
\end{lemma}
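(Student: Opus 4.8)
The plan is to realise $\Sigma:=\sum_{i=1}^n(1+S_i)$ as a sum of i.i.d.\ $\{1,2,\dots\}$-valued variables $T_i:=1+S_i$ and to control both $\E[1/\Sigma]$ and its truncation through the Laplace transform of $\Sigma$, via the elementary representation $1/x=\int_0^\infty e^{-tx}\,dt$ (legitimate here since $\Sigma\ge n\ge 2>0$). Throughout I fix the convention that a geometric variable of mean $m$ is $\{0,1,2,\dots\}$-valued with success parameter $p:=1/(1+m)$, so that $\E[T_1]=1+m=:b$; the degenerate case $m=0$ (where $\Sigma=n$ is deterministic) satisfies both displayed bounds trivially, so I assume $m>0$.

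First I would compute $\phi(t):=\E[e^{-tT_1}]=\frac{pe^{-t}}{1-(1-p)e^{-t}}$ and prove the single key estimate
\[ \phi(t)\ \le\ \frac{1}{1+bt}\qquad\text{for all }t\ge 0, \]
which, after clearing denominators and using $pb=1$, is exactly the classical inequality $(1+t)e^{-t}\le 1$. The first assertion of the lemma then follows by Tonelli: $\E[1/\Sigma]=\int_0^\infty\phi(t)^n\,dt\le\int_0^\infty(1+bt)^{-n}\,dt=\frac{1}{b(n-1)}$, and since $n\ge 2$ one has $\frac{1}{n-1}\le\frac{2}{n}\le\frac{2e}{n}$, giving $\E[1/\Sigma]\le\frac{2e}{n(1+m)}$.

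For the truncated bound the starting point is the pointwise inequality $\frac{\1_{x\le\varepsilon}}{x}\le\frac{e^{\lambda(\varepsilon-x)}}{x}$, valid for every $x>0$ and $\lambda\ge 0$. Applying it at $x=\Sigma$, inserting $1/\Sigma=\int_0^\infty e^{-t\Sigma}\,dt$ and using Tonelli once more gives, for every $\lambda\ge 0$,
\[ \E\!\left[\frac{\1_{\Sigma\le\varepsilon}}{\Sigma}\right]\ \le\ e^{\lambda\varepsilon}\int_\lambda^\infty\phi(u)^n\,du\ \le\ \frac{e^{\lambda\varepsilon}}{b(n-1)}\,(1+b\lambda)^{-(n-1)}. \]
I would then optimise in $\lambda$. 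If $e\varepsilon\ge n(1+m)$ the announced minimum equals $1$ and it suffices to quote $\E[\1_{\Sigma\le\varepsilon}/\Sigma]\le\E[1/\Sigma]\le\frac{2e}{n(1+m)}$. Otherwise $\varepsilon<n(1+m)/e\le(n-1)(1+m)$ (using $n/e\le n-1$ for $n\ge 2$), so $\lambda^\star:=\frac{n-1}{\varepsilon}-\frac1b\ge 0$ is admissible, and plugging it in yields the bound $\frac{1}{b(n-1)}\,e^{-\varepsilon/b}\bigl(\tfrac{e\varepsilon}{(n-1)b}\bigr)^{n-1}$.

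It remains to check that this last expression is at most $\frac{2e}{n(1+m)}\cdot\frac{e\varepsilon}{n(1+m)}$. Writing $y:=\varepsilon/(1+m)$, which satisfies $y<n/e$, cancelling the factor $(1+m)$ and one power of $y$ reduces the inequality to the purely numerical claim $\frac{e^{-y}e^{n-1}y^{n-2}}{(n-1)^n}\le\frac{2e^2}{n^2}$, which I would obtain from $e^{-y}\le 1$, $y^{n-2}\le(n/e)^{n-2}$, and $\bigl(\tfrac{n}{n-1}\bigr)^n=\bigl(1+\tfrac1{n-1}\bigr)^n\le 4\le 2e$ for $n\ge 2$ (the sequence $(1+1/m)^{m+1}$ being decreasing in $m$). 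I expect the main work to be exactly this final chain of elementary inequalities together with the careful separation of the two regimes $e\varepsilon\ge n(1+m)$ and $e\varepsilon<n(1+m)$; the conceptual inputs — the estimate $\phi(t)\le(1+bt)^{-1}$ and the exponential-shift trick feeding the integral representation — are short, and there is no substantial obstacle beyond bookkeeping and fixing conventions so that $\E[T_1]=1+m$ and $\Sigma\ge n$.
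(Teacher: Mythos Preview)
Your proof is correct but follows a genuinely different route from the paper. The paper interprets $\Sigma=\sum_i(1+S_i)$ as the waiting time for $n$ successes in Bernoulli$(1/(1+m))$ trials, derives the tail bound $\proba(\Sigma\le x)\le\bigl(\tfrac{ex}{(1+m)n}\bigr)^n$ by counting, and then writes $\E[1/\Sigma]=\int_0^\infty\proba(\Sigma\le x)\,x^{-2}\,dx$ and splits the integral at $x=(1+m)n/e$; the truncated estimate follows the same template. You instead bound the Laplace transform pointwise by $\phi(t)\le (1+(1+m)t)^{-1}$, feed this into $1/\Sigma=\int_0^\infty e^{-t\Sigma}\,dt$, and handle the truncation by a Chernoff-type exponential shift with optimisation in $\lambda$.

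Your route is arguably cleaner for the first inequality --- it even gives the sharper bound $\tfrac{1}{(1+m)(n-1)}$ directly --- while the paper's tail-bound approach treats both inequalities in a more uniform way and avoids the numerical end-game you need (the chain $(1+\tfrac{1}{n-1})^n\le 4\le 2e$ and the case split on $e\varepsilon\gtrless n(1+m)$). Both arguments are short; the substantive inputs are equivalent in strength, just packaged differently.
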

\begin{proof} Note that $\sum_{i=1}^n (1+S_i)$ is the time needed to get $n$ success for Bernoulli trials that hold with probability $1/(1+m)$. Thus for every $x>0$,
\[ \proba \left (\sum_{i=1}^n (1+S_i)\leq x  \right )\leq \binom{\lfloor x\rfloor }{n}\frac{1}{(1+m)^n} \leq  \left (\frac{x}{1+m}\right )^n/n! \leq \left (\frac{ex}{(1+m)n}\right )^n. \]

It directly follows by integration by part that,
\begin{align*} \E\left [\frac{1}{\sum_{i=1}^n (1+S_i)}  \right ] & = \int_0^\infty \proba \left (\sum_{i=1}^n (1+S_i) \leq x \right ) x^{-2} dx\notag
\\ & \leq \int_0^{(1+m)n/e} \left (\frac{ex}{(1+m)n}\right )^n x^{-2} dx +\int_{(1+m)n/e}^{\infty} x^{-2}dx \notag
\\ & = \frac{e}{(1+m)n(n-1)}+\frac{e}{(1+m)n} \notag
\\ & \leq \frac{2e}{(1+m)n}.  \end{align*} 

The second inequality is proved in a similar way.
%
%
\end{proof}
\subsection{Bias of $\P$-trees and ICRT}
Recall the definitions of section \ref{DkDef} and \ref{TkDef} of $(\square_i)_{1\leq i \leq k}$ and $\square_{\square,k}$. Recall that for every $x,m\in \R^+$,  $h_m :x \mapsto \1_{x\geq m}x$. 
\begin{lemma} \label{Followers} We have the following assertions:
\begin{compactitem}
\item[(a)] 
\[ \lim_{m\to \infty} \max_{\P\in \OmegaP}\E\left [h_{m}\left (\square_{\square,k}(T^{\P})/(\sigma^\P)^k\right )\right ]=0. \]
\item[b)] \[ \lim_{m\to \infty} \max_{\Theta \in \OmegaT}\E\left [h_{m}\left (\square_{\square,k}(\textbf Y^\Theta, \textbf Z^\Theta)\right )\right ]=0. \]
\end{compactitem} 
\end{lemma}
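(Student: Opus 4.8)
The plan is to deduce both assertions from Proposition \ref{MainMainMain} by approximation. Every $\P\in\OmegaP$ (resp.\ every $\Theta\in\OmegaT$) is a $\ply$-limit of a sequence $\D_n\ply\P$ (resp.\ $\D_n\ply\Theta$) in $\OmegaD$, since $\OmegaD$ is dense in $(\Omega,\ply)$. Along such a sequence the normalized bias $X_n:=\square_{\square,k}(T^{\D_n})/(\lambda^{\D_n})^k$ converges in distribution to the corresponding limit object — namely $\square_{\square,k}(T^\P)/(\sigma^\P)^k$ in case (a) and $\square_{\square,k}(\textbf Y^\Theta,\textbf Z^\Theta)$ in case (b) — while Proposition \ref{MainMainMain} controls the upper tail of $X_n$ \emph{uniformly in $n$}. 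It then suffices to pass this uniform tail bound to the limit. I describe (a); (b) is identical, using the $\Theta$-regime in place of the $\P$-regime.

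Fix $\P\in\OmegaP$ and a sequence $\D_n\ply\P$. A short computation shows that $\D_n\ply\P$ already forces $\lambda^{\D_n}\to\sigma^\P$: indeed $(\lambda^{\D_n})^2=\sum_i(d_i^{\D_n}/\n^{\D_n})^2-(\n^{\D_n}-2)/(\n^{\D_n})^2$, the second term vanishes, and splitting the first sum at a cutoff $K$ and using $\sum_i d_i^{\D_n}=\n^{\D_n}-2$, $d_i^{\D_n}/\n^{\D_n}\to p_i$ and $p_i\to0$ gives $\sum_i(d_i^{\D_n}/\n^{\D_n})^2\to\sum_i p_i^2=(\sigma^\P)^2$; note $\sigma^\P\ge p_1>0$, so there is no degeneracy. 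Next, recall that $\square_{\square,k}(T^\D)$ is a deterministic function of the edge-weighted subtree $T^\D(\{\star_i\}_{0\le i\le 2k})$, equivalently of the distance matrix $(d^\D(\star_i,\star_j))_{0\le i,j\le 2k}$: since a tree has no self-loop or multiple edge, every such edge of $\G_{(\star_i)_{1\le i\le 2k}}(T^\D)$ is created by the gluing between fathers of the $\star_i$, and an edge of $T^\D$ lies on one of the created cycles iff it lies on the spanned subtree and is non-bridge there after the gluing. By the stick-breaking construction of Algorithm \ref{D-tree} and its $\P$-tree counterpart Algorithm \ref{P-tree}, the uniform $\D_n$-tuple $A^{\D_n}$ converges in the finite-dimensional sense to the i.i.d.\ tuple $A^\P$ (this is \cite{Uniform}); hence the first repetition times of $A^{\D_n}$, which locate $\star_1,\dots,\star_{2k}$, converge, and therefore $(d^{\D_n}(\star_i,\star_j))_{0\le i,j\le 2k}\to(d^\P(\star_i,\star_j))_{0\le i,j\le 2k}$ in distribution. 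These being integer-valued, applying the deterministic function above and dividing by $(\lambda^{\D_n})^k\to(\sigma^\P)^k$ yields $X_n\limit^{(d)}\square_{\square,k}(T^\P)/(\sigma^\P)^k=:X$. (In case (b) one uses instead that the $\lambda^{\D_n}$-rescaled spanned subtrees converge to the corresponding spanned subtree of the $\Theta$-ICRT — this only concerns a finite $\R$-tree, so it holds even when $\mu^\Theta<\infty$ — together with the fact that $\circ(\cdot)\to1$ in probability because $d_1^{\D_n}/\n^{\D_n}\to0$ precludes self-loops and multiple edges in the limit; the limit is then $1/\prod_{i=1}^k\square_i(\textbf Y^\Theta,\textbf Z^\Theta)=\square_{\square,k}(\textbf Y^\Theta,\textbf Z^\Theta)$.)

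It remains to transfer the tail bound across this weak convergence, the only delicate point being that $h_m$ is discontinuous. Put $\phi(m):=\max_{\D\in\OmegaD:N^\D\ge2k}\E[h_m(\square_{\square,k}(T^\D)/(\lambda^\D)^k)]$, so that $\phi(m)\to0$ by Proposition \ref{MainMainMain} and $\E[h_m(X_n)]\le\phi(m)$ for every $n$; in particular $\sup_n\E[X_n]<\infty$ and the family $(X_n)_n$ of nonnegative variables is uniformly integrable. Write $\tilde h_a(x):=(x-a)_+$; then $\tilde h_a$ is continuous with $0\le\tilde h_a(x)\le x$ and $\tilde h_a\le h_a\le2\tilde h_{a/2}$. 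Uniform integrability and $X_n\limit^{(d)}X$ give $\E[\tilde h_{m/2}(X_n)]\to\E[\tilde h_{m/2}(X)]$, whence
\[ \E\!\left[h_m\!\left(\frac{\square_{\square,k}(T^\P)}{(\sigma^\P)^k}\right)\right]\le2\,\E[\tilde h_{m/2}(X)]=2\lim_n\E[\tilde h_{m/2}(X_n)]\le2\lim_n\E[h_{m/2}(X_n)]\le2\,\phi(m/2). \]
Since $\phi$ does not depend on $\P$, taking the supremum over $\P\in\OmegaP$ and letting $m\to\infty$ proves (a); the same argument with $\OmegaT$, $\Theta$-ICRT and $\square_{\square,k}(\textbf Y^\Theta,\textbf Z^\Theta)$ proves (b). As a byproduct this gives $\E[\square_{\square,k}(\textbf Y^\Theta,\textbf Z^\Theta)]\le m+\phi(m)<\infty$, as needed to define $(\Theta,k)$-ICRG.

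The genuinely hard input is Proposition \ref{MainMainMain}; what requires care in the present lemma is the convergence $X_n\limit^{(d)}X$ and the identification of its limit. Concretely the obstacles are: (i) exhibiting $\D_n$ with the correct normalization — routine, via the estimate above in case (a) and the analogous one with $\sigma$ in case (b); (ii) upgrading the $\D$-tree convergence of \cite{Uniform} to the joint convergence of the spanned subtree \emph{together with the combinatorial data} locating the $\star_i$; and (iii) checking, in case (b), that the continuous analogue of $\square_{\square,k}$ is a.s.\ continuous at the limit — i.e.\ that the $\Theta$-ICRT a.s.\ places the marked cut points $Y_1,\dots,Y_{2k}$ at distinct points of an $\R$-tree whose spanned subtree is non-degenerate — which is where one must invoke structural properties of the ICRT rather than cite \cite{Uniform} verbatim. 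In case (a) this last point is automatic, since the relevant distances are positive integers.
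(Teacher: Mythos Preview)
Your proof is correct and follows essentially the same route as the paper: approximate $\P$ (resp.\ $\Theta$) by a sequence $\D_n\ply\P$ (resp.\ $\D_n\ply\Theta$), use the convergence of the distance matrix between the marked leaves together with Lemma \ref{CHIANTa} to get $X_n\to X$ in law, and then transfer the uniform tail bound from Proposition \ref{MainMainMain} to the limit. The only cosmetic difference is in the last step: the paper sandwiches $h_{m+1}\le g\le h_m$ with $g$ continuous and applies the Portmanteau/Fatou inequality $\liminf_n\E[g(X_n)]\ge\E[g(X)]$ to obtain $\E[h_{m+1}(X)]\le\limsup_n\E[h_m(X_n)]\le\phi(m)$, whereas you use uniform integrability and the continuous surrogate $\tilde h_a(x)=(x-a)_+$ to reach $\E[h_m(X)]\le 2\phi(m/2)$; both are valid and lead to the same conclusion.
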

\begin{proof} We focus only on (a) as (b) can be proved in the exact same way. Fix $\P\in \OmegaP$. Let $(\Dn)_{n\in \N}\in \OmegaD^\N$ such that $\Dn \ply \P$ (see the start of Section \ref{MAINsection} or \cite{Uniform} Section 8.1 for existence). 
By \cite{Uniform} Theorem 5, we have the following weak convergence,
\[ (d^\Dn(\star_{i},\star_{j}))_{1\leq i,j \leq 2k}\limit^{(d)} (d^\P(\star_i,\star_j))_{1\leq i,j\leq 2k}. \]
Then by Lemma \ref{CHIANTa} (see also \cite{MST} Corollary 6.6), $\square_{\square_k}(T^{\Dn})$ converges weakly toward $\square_{\square_k}(T^{\P})$ as $n\to \infty$.
Furthermore, by Fubini's Theorem, 
\[(\lambda^\Dn)^2=(\sigma^\Dn/s^\Dn)^2= \sum_{i=1}^\infty \frac{(d^\Dn_i)(d^\Dn_i-1)}{(\n^\Dn)^2}\limit \sum_{i=1}^\infty p_i^2=(\sigma^\P)^2. \]
Therefore, for every $m\geq 0$, 
\begin{equation} \limsup \E[h_m ( \square_{\square,k}(T^{\Dn})/(\lambda^\Dn)^k) ]\geq \E [ h_{m+1} ( \square_{\square,k}(T^{\P} ) /(\s^\P)^k)].\label{21h/4/12} \end{equation}
Finally, Proposition \ref{MainMainMain} concludes the proof.
\end{proof}
\section{Proof of the main theorems} \label{May the proof section be removed?}
 Theorems \ref{GP} and \ref{THM2} directly follows from three thing: the trees converges, the operation of gluing leaves is a continuous application, and the bias converge. In this section, we precise the proofs. 
 \subsection{Proof of Theorem \ref{GP}} 
\begin{proof}[Proof of Theorem \ref{GP} (a)] Let $(\Dn)_{n\in \N}\in \OmegaD^\N$ and  $\P=(p_i)_{i\in \N\cup \{\infty\}}\in \OmegaP$ such that $\Dn\ply \P$. Let $a\in \N$ such that $p_a>0$. For all $1\leq i \leq a$ let $W_i=V_i$. For all $1\leq i \leq 2k$, let $W_{a+i}:= \star_i$. 
By \cite{Uniform} Theorem 5, it is easy to check that we have the following joint convergence,
\begin{equation} (d^\Dn(W_i,W_j))_{1\leq i,j\leq a+2k}\limit^{(d)} (d^\P(W_i,W_j))_{1\leq i,j\leq a+2k}, \label{10/12/8h} \end{equation}
writing $d^\Dn$ for the graph distance on $T^\Dn$, and $d^\P$ for the graph distance on $T^\P$.

Then by Kolmogorov representation theorem, we may assume that \eqref{10/12/8h} holds a.s.  Furthermore, since we work with discrete trees, note that a.s. for every $n$ large enough equality holds in \eqref{10/12/8h}. Hence, by Lemma \ref{CHIANTa} a.s. for every $n$ large enough $\square_{\square,k}(T^\D)=\square_{\square,k}(T^\P)$. Thus, by dominated convergence, for any continuous bounded function $f:\R^{(a+2k)^2}\to \R^+$,
\[ \frac{\E[f(d^\Dn(W_i,W_j))_{1\leq i,j\leq a+2k})\square_{\square,k}(T^\Dn)]}{\E[\square_{\square,k}(T^\Dn)]}\limit \frac{\E[f(d^\P(W_i,W_j))_{1\leq i,j\leq a+2k})\square_{\square,k}(T^\P)]}{\E[\square_{\square,k}(T^\P)]}. \]
Therefore, writing $\bar d^{\Dn,k}$ for the graph distance on $T^{\Dn,k}$ and $\bar d^{\P,k}$ for the graph distance on $T^{\P,k}$,
\begin{equation} (\bar d^{\Dn,k}(W_i,W_j))_{1\leq i,j\leq a+2k}\limit^{(d)} (\bar d^{\P,k}(W_i,W_j))_{1\leq i,j\leq a+2k}. \label{Tardif} \end{equation}

Finally by gluing $(\star_1,\star_2),\dots, (\star_{2k-1},\star_{2k})$, which is a continuous map for the matrix distance, 
\begin{equation*} (d^{\Dn,k}(V_i,V_j))_{1\leq i,j\leq a}\limit^{(d)} (d^{\P,k}(V_i,V_j))_{1\leq i,j\leq a}.\end{equation*}
And Theorem \ref{GP} (a) follows from Lemma \ref{equivGP2}.
\end{proof}
\begin{proof}[Proof of Theorem \ref{GP} (b)] Let $(\Dn)_{n\in \N}\in \OmegaD^\N$ such that $\Dn\ply \Theta\in \OmegaT$. For every $n\in \N$ let $\M^{\Dn,k}$ be a probability measure on $\V^{\Dn,k}$ such that $\M^{\Dn,k}\to 0$.  For every $n\in \N$ and $1\leq i \leq 2k$, let $W^\Dn_{i}:= \star_i$. Also, let $(W_i^\Dn)_{i>2k}$ be a family of independent random variables with law $\M^{\Dn,k}$. Fix $a>2k$. By \cite{Uniform} Theorem 6 (b) and Lemma 14, we have 
\begin{equation} \left (\lambda^\Dn d^\Dn(W^\Dn_i,W^\Dn_j) \right)_{1\leq i,j\leq a}\limit^{(d)} (d^\Theta(Y^\Theta_i,Y^\Theta_j))_{1\leq i,j\leq a}. \label{10/12/8hX} \end{equation}

Then by Kolmogorov representation theorem we may assume that \eqref{10/12/8hX} holds almost surely. Hence, by Lemma \ref{CHIANTa} a.s. $ \square_{\square,k}(T^{\Dn})/(\lambda^\Dn)^k\to \square_{\square,k}(Y^\Theta,Z^\Theta)$ as $n\to \infty$.  Thus, by Proposition \ref{MainMainMain} and dominated convergence, we have for all continuous bounded function $f:\R^{a^2}\to \R$, 
\[ \frac{\E[f((\lambda^\Dn d^\Dn(W^\Dn_i,W^\Dn_j))_{1\leq i,j\leq a})\square_{\square,k}(T^\Dn)]}{\E[\square_{\square,k}(T^\Dn)]}\limit \frac{\E[f((d^\Theta(Y^\Theta_i,Y^\Theta_j))_{1\leq i,j\leq a})\square_{\square,k}(Y^\Theta,Z^\Theta)]}{\E[\square_{\square,k}(Y^\Theta,Z^\Theta)]}. \]
Therefore, 
\begin{equation*} (\lambda^\Dn\bar d^{\Dn,k}(W^\Dn_i,W^\Dn_j))_{1\leq i,j\leq a}\limit^{(d)} (\bar d^{\Theta,k}(Y^\Theta_i,Y^\Theta_j))_{1\leq i,j\leq a}.\end{equation*} 

Finally by gluing the $k$ first pair of vertices, which is a continuous map for the matrix distance,
\begin{equation*} (\lambda^\Dn d^{\Dn,k}(W^\Dn_i,W^\Dn_j))_{2k+1\leq i,j\leq a}\limit^{(d)} (d^{\Theta,k}(Y^\Theta_i,Y^\Theta_j))_{2k+1\leq i,j\leq a}.\end{equation*} 
And Theorem \ref{GP} (a) follows from Lemma \ref{equivGP2}.
\end{proof}
%
%
\begin{proof}[Proof of Theorem \ref{GP} (c,d)] Since $\K_{\GP}$ is a Polish space, and $\Omega_{\D}$ is dense on $(\Omega,\ply)$, the results directly follows from Theorem \ref{GP} (a,b) (see \cite{Uniform} Section 8.1 for details). Also, they can be proved similarly.
\end{proof}
 \subsection{Proof of Theorem \ref{THM2}}
\begin{proof}[Proof of Theorem \ref{THM2} (a)] Let $(\Dn)_{n\in \N}\in\OmegaD^\N$ such that $\Dn\ply \Theta\in \OmegaT$. By \cite{Uniform} Theorem 6 (b),
\begin{equation*} (\lambda^\Dn d^\Dn(\star_i,\star_j))_{i,j\in \N}\limit^{(d)} (d^\Theta(Y^\Theta_i,Y^\Theta_j))_{1\leq i,j\in \N}. \end{equation*}
Thus, by Lemma \ref{reconstructTHM} for every $a\in \N$, we have for the a-pointed GH topology (see Section \ref{PointedGH}),
\begin{equation*} (T^\Dn(\{\star_i\}_{1\leq i \leq a}),\lambda^\Dn d^\Dn,\{\star_i\}_{1\leq i \leq a}) \limit^{\WGH^a} (T^\Theta(\{Y_i^\Theta\}_{1\leq i \leq a}),d^\Theta,\{Y_i^\Theta\}_{1\leq i \leq a}). \end{equation*}
Therefore, by Assumption \ref{Hypo3}, we have for the $2k$-pointed GH topology,
\begin{equation} (T^\Dn,\lambda^\Dn d^\Dn,\{\star_i\}_{1\leq i \leq 2k}) \limit^{\WGH^{2k}} (T^\Theta,d^\Theta,\{Y_i^\Theta\}_{1\leq i \leq 2k}). \label{10/12/16h} \end{equation}

Then, by Skorohod representation theorem we may assume that the above convergence holds almost surely. Thus by Lemma \ref{CHIANTa} a.s. $\square_{\square,k}(T^\Dn)\to \square_{\square,k}(\textbf Y^\Theta, \textbf Z^\Theta)$.
Then for every continuous bounded function $f$ on $\K^{2k}_{\GH}$ we have by Proposition \ref{MainMainMain} and dominated convergence,
\[ \frac{\E[f(T^\Dn,\lambda^\Dn d^\Dn,\{\star_i\}_{1\leq i \leq 2k})  \square_{\square,k}(T^\Dn)]}{\E[\square_{\square,k}(T^\Dn)]}\limit \frac{\E[f(T^\Theta,d^\Theta,\{Y_i^\Theta\}_{1\leq i \leq 2k}) \square_{\square,k}(\textbf Y^\Theta, \textbf Z^\Theta)]}{\E[\square_{\square,k}(\textbf Y^\Theta,\textbf Z^\Theta)]}. \]
Therefore,
\[ (T^{\Dn,k},\lambda^\Dn \bar d^{\Dn,k},\{\star_i\}_{1\leq i \leq 2k})\limit^{\WGH^{2k}} (T^{\Theta,k}, \bar d^{\Theta,k},\{Y_i^{\Theta,k}\}_{1\leq i \leq 2k}). \]

Finally since the gluing of $k$ pair of point is a continuous operation for the $2k$-pointed GH topology the desired result follows.
\end{proof}
\begin{proof}[Proof of Theorem \ref{THM2} (b,c)] The results can be proved in the exact same way. 
\end{proof}
\section{Configuration model and multiplicative graphs} \label{ALTEsection}
The main objective of this section is to explain the connections between the configuration model and multiplicative graphs, and between those models and $(\D,k)$-graphs and $(\P,k)$-graphs. 
%
\subsection{Definitions}
For every multigraph $G$ on $\{V_i\}_{i\in \N}$ and $i,j\in \N$ let $\#_{i,j}(G)$ be the number of edges $\{V_i,V_j\}$ in $G$. So that a multigraph on $\{V_i\}_{i\in \N}$ may be seen as a matrix.

We call a function $f:I\mapsto I$ a matching if $f\circ f=\Id$ and for every $x\in I$, $f(x)\neq x$. Let $\Omega_{\CM}$ be the set of decreasing sequence $(d_1,\dots d_\n)$ in $\{0\}\cup \N$ such that $\sum_{i=1}^\n d_i$ is even. 
\begin{algorithm} \label{ConfigAlgo} Construction of the configuration model from $\D=(d_1,\dots, d_\n)\in\Omega_{\CM}$:
\begin{compactitem}
\item[-] Let $f=(f_1,f_2)$ be a uniform matching of $\{(i,j) \}_{1\leq i \leq \n, 1\leq j\leq d_i}$.
\item[-] The configuration model is the random multigraph $\CM^\D$ with vertices $(V_i)_{1\leq i \leq \n}$ and such that for every $1\leq i \leq \n$,  $\#_{i,i}(\CM^\D):=\frac{1}{2} \sum_{a=1}^{d_i} \1_{f_1(i,a)=i}$ and for $1\leq i \neq j\leq \n$, 
\[ \#_{i,j}(\CM^\D): =\sum_{a=1}^{d_i} \1_{f_1(i,a)=j}=\sum_{a=1}^{d_j} \1_{f_1(j,a)=i}.\]
\end{compactitem}
\end{algorithm}
Let $\Omega_{\MG}$ be the set of sequence $(\lambda,p_1,\dots, p_\n)$ in $\R^{+*}$ with $p_1\geq \dots\geq p_\n$. 
\begin{algorithm} Construction of the multiplicative graph from $\P=(\lambda,p_1,\dots, p_\n)\in\Omega_{\MG}$:
\begin{compactitem}
\item[-] Let $(X^\P_{i,j})_{1\leq i\neq j \leq \n}$ be independent Bernoulli random variables with mean $1-e^{-\lambda p_i p_j}$.
\item[-] The multiplicative graph is the random graph $\MG^\P$ with vertices $(V_1,\dots, V_\n)$ and with edges $\{1\leq i,j \leq \n: X_{i,j}=1 \}$.
\end{compactitem}
\end{algorithm}
Next, we introduce multiplicative multigraphs, which are augmented multiplicative graphs.
\begin{algorithm} Construction of the multiplicative multigraph from $\P=(\lambda,p_1,\dots, p_\n)\in\Omega_{\MG}$:
\begin{compactitem}
\item[-] Let $(N^\P_{i,j})_{1\leq i,j \leq \n}$ be independent Poisson random variables, such that for every $1\leq i \leq \n$, $N^\P_{i,i}$ have mean $\lambda p_i^2/2$ and for every $1\leq i\neq j \leq \n$, $N^\P_{i,j}$ have mean $\lambda p_ip_j$.
\item[-] The multiplicative multigraph is the random multigraph $\MG^{\P+}$ with vertices $(V_i)_{1\leq i \leq \n}$ and such that for every $1\leq i,j\leq \n$,  $\#_{i,j}(\MG^{\P+}):=N^\P_{i,j}$.
\end{compactitem}
\end{algorithm} 
\begin{lemma} There exists a coupling such that $\MG^\P$ is the graph obtained from $\MG^{\P+}$ by removing all its multi-edge. That is, for every $i\neq j$, $\{i,j\}$ is an edge of $\MG^\P$ iff $\#_{i,j}(\MG^{\P+})\geq 1$.
\end{lemma}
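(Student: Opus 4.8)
The plan is to construct the coupling directly from a single underlying randomness, namely the Poisson counts $(N^\P_{i,j})_{1\leq i,j\leq \n}$ that define $\MG^{\P+}$, and then to \emph{thin} them down to Bernoulli indicators of the form required by $\MG^\P$. The key observation is the elementary fact that if $N$ is a Poisson random variable with mean $\mu$, then $\1_{N\geq 1}$ is a Bernoulli random variable with mean $\proba(N\geq 1)=1-e^{-\mu}$. Applying this with $\mu=\lambda p_ip_j$ for each unordered pair $\{i,j\}$ with $i\neq j$, we see that $X_{i,j}:=\1_{\#_{i,j}(\MG^{\P+})\geq 1}=\1_{N^\P_{i,j}\geq 1}$ has exactly the marginal law prescribed in the definition of $\MG^\P$, that is, Bernoulli with mean $1-e^{-\lambda p_ip_j}$.

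First I would set up the coupling: take $(N^\P_{i,j})_{1\leq i,j\leq \n}$ independent Poisson with the stated means, build $\MG^{\P+}$ from them, and define the graph $\tilde G$ to have vertices $(V_i)_{1\leq i\leq\n}$ and edge set $\{\{i,j\}: i\neq j,\ N^\P_{i,j}\geq 1\}$, which by construction is exactly the graph obtained from $\MG^{\P+}$ by collapsing every multi-edge to a single edge (and deleting self-loops, which play no role here since $\MG^\P$ is a simple graph). Then I would check two things. For the marginal law: the indicators $(\1_{N^\P_{i,j}\geq 1})_{1\leq i< j\leq\n}$ are independent because the $(N^\P_{i,j})_{1\leq i< j\leq\n}$ are independent and the indicator of each only depends on its own count; and each has the right Bernoulli mean by the Poisson-zero computation above. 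Hence $\tilde G$ has the same law as $\MG^\P$, so this is a genuine coupling of $\MG^\P$ and $\MG^{\P+}$. For the pathwise statement: by the very definition of $\tilde G$, for every $i\neq j$ the pair $\{i,j\}$ is an edge of $\tilde G$ if and only if $\#_{i,j}(\MG^{\P+})=N^\P_{i,j}\geq 1$, which is the desired relation.

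There is essentially no obstacle here; the only point requiring a word of care is the bookkeeping of ordered versus unordered pairs. In Algorithm's definition of $\MG^{\P+}$ the counts $N^\P_{i,j}$ are indexed by ordered pairs $(i,j)$, but the number of edges between $V_i$ and $V_j$ in a multigraph is an unordered quantity, so one should fix the convention (as in the rest of the paper via $\#_{i,j}$) that $\#_{i,j}(\MG^{\P+})=N^\P_{i,j}$ for $i\neq j$ with the understanding that $N^\P_{i,j}$ and $N^\P_{j,i}$ refer to the same count, and similarly that the self-loop count is $N^\P_{i,i}$. Once this convention is in place the argument is a one-line consequence of the Poisson thinning identity, and independence across distinct unordered pairs is inherited verbatim from the independence of the $N^\P_{i,j}$. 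I would therefore present the proof as: (i) state the Poisson-zero identity; (ii) exhibit the coupling $\tilde G$; (iii) verify the marginal law of $\tilde G$ equals that of $\MG^\P$; (iv) read off the pathwise edge relation from the construction.
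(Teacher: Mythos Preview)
Your proof is correct and follows exactly the same approach as the paper, which simply observes that one may couple so that $X^\P_{i,j}=0$ iff $N^\P_{i,j}=0$ for all $i\neq j$, and states that the result follows. Your write-up is more detailed (explicitly checking marginals and independence), but the underlying idea is identical.
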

\begin{proof} It is easy to check that there exists a coupling such that a.s. for every $1\leq i \neq j \leq \n$ $X^\P_{i,j}=0$ iff $N^\P_{i,j}=0$. The result follows.
\end{proof}
\subsection{Multiplicative multigraphs as local limit of the configuration model}
\begin{lemma} \label{CM=>MG}Let $\P=(\lambda,p_1,\dots, p_\n)\in\Omega_{\MG}$. For $n\in \N$, let $\D^n=(d_i^n)_{1\leq i \leq \n^n}\in \Omega_{\CM}$. If $\n^\Dn\to \infty$, and for every $1\leq i \leq \n$, $d^n_i\sim  \sqrt{ \n^n \lambda} p_i$, and for every $n\in \N$, $i> \n$, $d_i^n=1$. Then,
\[\left  (\#_{i,j}(\CM^{\Dn}) \right )_{1\leq i,j\leq \n}\limit^{(d)}\left  (\#_{i,j}(\MG^{\P+})\right )_{1\leq i,j\leq \n}.  \]
\end{lemma}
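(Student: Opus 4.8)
The plan is to argue by the method of moments, using that the entries of $\MG^{\P+}$ are independent Poisson variables, so that the law of $\big(\#_{i,j}(\MG^{\P+})\big)_{1\le i\le j\le \n}$ is determined by its joint moments. Throughout, write $H_i:=\{(i,a):1\le a\le d_i^n\}$ for the set of half-edges at $V_i$, let $m_n:=\sum_{i\ge 1}d_i^n$ be the (even) total number of half-edges, and recall that $\CM^{\Dn}$ is built from a uniform matching $f$ of $\bigcup_i H_i$. Since there are $\n^n-\n$ vertices of degree $1$ while the $\n$ remaining vertices have degree $d_i^n\sim\sqrt{\n^n\lambda}\,p_i$, we get $m_n=(\n^n-\n)+(1+o(1))\sqrt{\n^n\lambda}\sum_{i\le\n}p_i\sim\n^n$. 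For $i\ne j$ one has $\#_{i,j}(\CM^{\Dn})=\sum_{x\in H_i}\1_{f(x)\in H_j}$ and $\#_{i,i}(\CM^{\Dn})=\tfrac12\sum_{x\in H_i}\1_{f(x)\in H_i}$, so in all cases $\#_{i,j}$ counts the edges of $f$ with one endpoint in $H_i$ and one in $H_j$, the prefactor $\tfrac12$ accounting for unordered pairs inside $H_i$.

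The heart of the matter is the joint factorial moments. For a uniform matching of $m_n$ half-edges and any family of $K$ pairwise disjoint unordered pairs of half-edges, the probability that all $K$ of them are edges of the matching equals $\prod_{\ell=1}^{K}(m_n-2\ell+1)^{-1}=(1+o(1))\,m_n^{-K}$ (compare the number of matchings containing $K$ prescribed disjoint edges with the total). Fix nonnegative integers $(k_{ij})_{1\le i\le j\le\n}$ and set $K:=\sum_{i\le j}k_{ij}$. Then $\E\big[\prod_{i\le j}(\#_{i,j}(\CM^{\Dn}))_{k_{ij}}\big]$ is the expected number of ways of choosing, for every pair $(i,j)$, an ordered $k_{ij}$-tuple of distinct $V_i$--$V_j$ edges of $f$, all the chosen edges being globally pairwise disjoint. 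Since $K$ is fixed and each $d_i^n\to\infty$, the disjointness constraints cost only a $1+o(1)$ factor, so the number of admissible choices of half-edges is $(1+o(1))\prod_{i<j}(d_i^n d_j^n)^{k_{ij}}\prod_{i}\big(\tfrac12(d_i^n)^2\big)^{k_{ii}}$. Multiplying by $(1+o(1))m_n^{-K}$ and inserting $d_i^n\sim\sqrt{\n^n\lambda}\,p_i$ and $m_n\sim\n^n$, all the powers of $\n^n$ cancel and
\begin{equation*}
\E\Big[\prod_{i\le j}(\#_{i,j}(\CM^{\Dn}))_{k_{ij}}\Big]\ \longrightarrow\ \prod_{i<j}(\lambda p_ip_j)^{k_{ij}}\ \prod_{i}\Big(\tfrac{\lambda p_i^2}{2}\Big)^{k_{ii}}.
\end{equation*}

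By independence of the $N^\P_{i,j}$ and the identity $\E[(X)_k]=\mu^k$ for $X\sim\mathrm{Poisson}(\mu)$, the right-hand side is exactly $\E\big[\prod_{i\le j}(\#_{i,j}(\MG^{\P+}))_{k_{ij}}\big]$, since $\#_{i,j}(\MG^{\P+})=N^\P_{i,j}$ has mean $\lambda p_ip_j$ for $i<j$ and $\lambda p_i^2/2$ for $i=j$. A vector of independent Poisson variables has an everywhere-finite moment generating function, hence its law is determined by its moments; so convergence of all joint factorial moments forces convergence in distribution of $\big(\#_{i,j}(\CM^{\Dn})\big)_{1\le i\le j\le\n}$, and hence, by symmetry of $\#_{i,j}$, of $\big(\#_{i,j}(\CM^{\Dn})\big)_{1\le i,j\le\n}$, which is the claim.

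The main obstacle is the bookkeeping inside the moment computation: one has to check, uniformly in $n$, that all the disjointness corrections are $1+o(1)$ — in particular for half-edges shared between terms $\#_{i,j}$ and $\#_{i,\ell}$ incident to a common vertex $V_i$ — and to treat the diagonal terms carefully, verifying that the $\tfrac12$ in the definition of $\#_{i,i}$ together with the $\tfrac12$ from unordered pairs inside $H_i$ produces the Poisson parameter $\lambda p_i^2/2$. These estimates are elementary but slightly tedious; once they are in place the argument closes. Alternatively, one could run the sequential ``match one half-edge at a time'' construction and note that the $\Theta(\sqrt{\n^n})$ half-edges incident to $V_1,\dots,V_\n$ produce $O(1)$ internal matches whose number is asymptotically Poisson and whose types are asymptotically i.i.d., then conclude by Poisson colouring; this is the same computation in a different guise.
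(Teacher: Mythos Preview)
Your proof via joint factorial moments is correct. The paper takes a genuinely different route: it encodes the matches among $H_1,\dots,H_\n$ as a random point measure $\nu^n$ on $\{(i,j)\}_{1\le i\le j\le \n}\times\R^2$, placing an atom at $(i,j,a/\sqrt{\n^n},b/\sqrt{\n^n})$ whenever $f(i,a)=(j,b)$; it then shows tightness via a first-moment bound, and argues that any subsequential limit is a Poisson point process of the correct intensity by computing mean measures, checking that covariances over disjoint compacts vanish in the limit, and verifying simplicity. Integrating out the $\R^2$ coordinates recovers the edge counts.

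Your argument is more elementary and self-contained: it avoids the point-process machinery and reduces everything to the single identity $\proba(\text{$K$ prescribed disjoint edges}\subset f)=\prod_{\ell=1}^K(m_n-2\ell+1)^{-1}$ for uniform matchings, together with moment-determinacy of Poisson vectors. The paper's approach is more structural, identifying the limiting object as a Poisson process before projecting, and would extend more naturally to infinitely many distinguished vertices or continuous types (which is in the spirit of the remark following the lemma about the LIFO-queue construction). Both proofs rest on the same asymptotic $d_i^n d_j^n/m_n\to\lambda p_ip_j$. Your caveat about half-edges shared between $\#_{i,j}$ and $\#_{i,\ell}$ is easily dispatched: the total number of half-edges drawn from $H_i$ across all factors is some fixed $r_i$ while $d_i^n\to\infty$, so $(d_i^n)_{r_i}/(d_i^n)^{r_i}\to 1$, which is exactly the $1+o(1)$ you need.
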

\begin{remark} From this result, one may see the LIFO-queue algorithm of Broutin, Duquesne, Wang \cite{P-graph-1,P-graph-2} as a limit of a recursive construction, based on a DFS exploration, of a uniform matching.
\end{remark}
\begin{proof}  Let $(\D^n)_{n\in \N}$ and $\P$ be as in the statement. For $n\in \N$, let 
 $f^n=(f^n_1,f^n_2)$ a uniform matching of $\{(i,j) \}_{1\leq i \leq \n^n, 1\leq j\leq d^n_i}$. We may assume that $\CM^{\Dn}$ is constructed from $f^n$ by Algorithm \ref{ConfigAlgo}. The  main idea is that for $n$ large enough $\{f_1(i,j)\}_{1\leq i \leq \n^n, 1\leq j\leq d^n_i}$ are mostly independent. Since Poisson random variables appears as the limits of Bernoulli trials this explain the  convergence. From there, there are many standard ways to justify the convergence. 

Below we briefly present a method based on random point process. We let the reader refer to Kallenberg \cite{Kallenberg} Section 4 for more details on convergence of point process. Let $\nu^n$ be the random measure on $\K:=\{(i,j)\}_{1\leq i\leq j \leq \n}\times \R^2$ defined by
\[ \nu^n:=\sum_{1\leq i <j\leq \n}\sum_{\substack{1\leq a \leq d_i\\1\leq b \leq d_j } } \1_{f(i,a)=(j,b)}\delta_{(i,j,a/\sqrt{ \n^n},b/\sqrt{ \n^n})} +\sum_{1\leq i\leq n}\sum_{1\leq a<b\leq d_i } \1_{f(i,a)=(i,b)} \delta_{(i,i,a/\sqrt{ \n^n},b/\sqrt{ \n^n})} . \]
It is enough to prove that $\{\nu^n\}_{n\in \N}$ converges vaguely toward a Poisson point process of rate 
\begin{equation}  d\nu:=\sum_{1\leq i\leq j\leq \n} \1_{0\leq x\leq \lambda p_i}\1_{0\leq y\leq \lambda p_j} \delta_{i,j}dx dy+\sum_{1\leq i\leq n}  \1_{0\leq x\leq y \leq \lambda p_i}\delta_{i,i}dx dy. \label{11/12/13h} \end{equation}
Indeed, provided this convergence, the desired result directly follows by integration over $dxdy$.

To this end, first note that for every $n\in \N$, writing $m_n:=\sum_{i=1}^{\n^n} d_i^n$, 
\begin{align*} \E[\nu^n(\K)] & =\sum_{1\leq i <j\leq \n}\sum_{\substack{1\leq a \leq d_i\\1\leq b \leq d_j } } \proba(f(i,a)=(j,b))+\sum_{1\leq i\leq n}\sum_{1\leq a<b\leq d_i } \proba(f(i,a)=(i,b)).
\\ & =  \sum_{1\leq i <j\leq \n} \frac{d_i d_j}{m^n}+\sum_{1\leq i\leq n}\sum_{1\leq a<b\leq d_i } \frac{d_i^2/2}{m^n}
\\ & \to \sum_{1\leq i <j\leq \n} \lambda p_i p_j+\sum_{1\leq i\leq n}\sum_{1\leq a<b\leq d_i } \lambda p_i^2/2
 \end{align*}
 where the last inequality comes from the assumptions of the lemma on $(\D_n)_{n\in \N}$. Thus, $\{\nu^n\}_{n\in \N}$ is tight for the vague topology.
 Let $\nu$ be a sub-sequential limit of $\{\nu^n\}_{n\in \N}$.
 
By a similar computation, for every $1\leq i<j\leq \n$, and $0\leq a\leq a' \leq \lambda p_i$, $0\leq b\leq b' \leq \lambda p_j$,
 \[\E[\nu(\{i,j\} \times [a,a']\times [b,b'])]=\lim_{n\to\infty} \E[\nu^n(\{i,j\}\times [a,a']\times [b,b'] )]= \lambda (a'-a)(b'-b). \]
 And for every $1\leq i \leq \n$, $0\leq a\leq a'\leq b\leq b'\leq p_i$.
  \[\E[\nu(\{i,i\} \times [a,a']\times [b,b'])]=\lim_{n\to\infty} \E[\nu^n(\{i,i\}\times [a,a']\times [b,b'] )]= \lambda (a'-a)(b'-b). \]
  
Next, we prove that $\nu$ satisfies the independency criterium. Beforehand let us introduce some notations. Let $\cov(\cdot,\cdot)$  be the covariance of two random variables. Let
 \begin{align*}S := \left \{(i,j,a,b)\in \N^4: 1\leq i<j\leq \n, \substack{ 1\leq a \leq d_i \\ 1\leq b\leq d_j } \right \} \cup \{(i,i,a,b)\in \N^4: 1\leq i\leq \n, 1\leq a<b \leq d_i \}. \end{align*}
For every $K_1, K_2\subset \K$ disjoint compact set, for every $n\in \N$, $\cov(\nu^n(K_1),\nu^n(K_2))$ equal
 \[ \sum_{\substack{(i,j,a,b)\in S\\(i',j',a',b')\in S }}\1_{\left (i,j,\frac{a}{\sqrt{\n^n}},\frac{b}{\sqrt{\n^n}} \right )\in K_1}\1_{\left (i',j',\frac{a'}{\sqrt{\n^n}},\frac{b'}{\sqrt{\n^n}}\right )\in K_2} \cov(\1_{f(i,a)=(j,b)},\1_{f(i',a')=(j',b')}).  \]
 Then, by distinguishing whether it is possible to have both $f(i,a)=(j,b)$ and $f(i',a')=(j',b')$, note that in the last sum there are $O(\#S)$ terms that are equal to $0-1/(m^n)^2$, $O((\#S)^2)$ terms that are equal to $1/(m^n)(m^n-2)-(1/m^n)^2=O(1/(m^n)^3)$, and the others that are null. Therefore,
 \[ |\cov(\nu^n(K_1),\nu^n(K_2))| =O(\#S)O(1/(m^n)^2)+O((\#S)^2)O(1/(m^n)^3)=O(1/m^n)\to 0.\]
 Since the last convergence hold for every disjoint compact $K_1,K_2\subset \K$, we have that for every disjoint compact $K'_1,K'_2\subset \K$, $\cov(\nu(K'_1),\nu(K'_2))=0$. 
 
 Finally, to prove that $\nu$ is a Poisson point process of rate \eqref{11/12/13h} it is enough to check that a.s. for every $x\in \K$, $\nu(x)\in \{0,1\}$. To this end, one may adapt the previous argument to show that there exists $C>0$, such that for every $x\in \K$, $\e>0$, writing $B(x,\e)$ for the closed ball centered at $x$ of radius $\e$ for $\| \|_{\infty}$, if $B(x,\e)$ does not intersect $\{(i,i,1/2,1/2)\}_{1\leq  i\leq \n}$ then
 \[ \E[\nu(B(x,\e))(\nu(B(x,\e))-1)]\leq C\e^2. \]
This implies the desired property, and so concludes the proof.
\end{proof}
\subsection{Connections with $(\D,k)$-graphs and $(\P,k)$-graphs}
Recall that for every multigraph $G$ on $\{V_i\}_{i\in \N}$, $\circ(G):=\prod_{i\in \N} 2^{\#_{i,i}(G)}\prod_{i,j\in \N}\#_{i,j}(G)!$.
\begin{lemma} \label{Connections}
\begin{compactitem} Let $k\in \N$ we have the following assertions:
\item[(a)] Let $\D=(d_1,\dots, d_\n)\in  \Omega_{\CM}$ such that $\sum_{i=1}^\n d_i=2\n+k-2$. Then $\CM^\D$ biased by $\circ(\CM^\D)$ and conditioned at being connected is a $((d_1-1,\dots, d_\n-1),k)$-graph.
\item[(b)] Let $\mathcal W=(\lambda,w_1,\dots, w_\n)\in \Omega_{\MG}$. For every $1\leq i \leq \n$, let $p_i:=w_i/\sum_{j=1}^\n w_j$. Let  $\P=(p_1, \dots, p_\n,0,0,\dots)$. Then $\MG^{\mathcal W+}$ biased by $\circ(\MG^{\mathcal W+})$ and conditioned at being connected and having surplus $k$ is a $(\P,k)$-graph. 
\end{compactitem}
\end{lemma}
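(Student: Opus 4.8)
The plan is to handle both items by the same mechanism: write down the law of the un-conditioned random multigraph, observe that biasing by $\circ(\cdot)$ erases precisely the combinatorial weights this law carries, and then identify the conditioned measure with the corresponding $(\cdot,k)$-graph.

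For (a), recall that $\CM^\D$ is the image of a uniform matching of the $m:=\sum_{i=1}^{\n}d_i$ half-edges. For a multigraph $G$ on $\{V_i\}_{1\le i\le\n}$ with $\deg_G(V_i)=d_i$ for every $i$, a direct enumeration of matchings shows that $\proba(\CM^\D=G)=\frac{1}{(m-1)!!}\cdot\frac{\prod_i d_i!}{\circ(G)}$, the fraction being the number of matchings realising $G$ (label the $d_i$ half-edges at $V_i$ in all $d_i!$ ways; the over-counting is the permutations of parallel edges, a factor $\#_{v,w}(G)!$, and of the two ends of each loop, a factor $2^{\#_{v,v}(G)}$). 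Hence $G\mapsto\proba(\CM^\D=G)\,\circ(G)$ is constant, so $\CM^\D$ biased by $\circ(\CM^\D)$ is uniform on the set of all multigraphs with $\deg(V_i)=d_i$, and conditioning on connectedness makes it uniform on the connected ones. Each such connected multigraph has degree sequence $(d_1-1,\dots,d_\n-1)$ in the $+1$-shift convention, with $\sum_i(d_i-1)=\n+k-2$, hence is exactly of the type admitted by the definition of the $((d_1-1,\dots,d_\n-1),k)$-graph (Section \ref{DkDef}); so the uniform law on them is that graph.

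For (b) the same computation is run with Poisson weights: collecting the exponential prefactors into a $G$-independent constant, assembling the $\#_{i,j}(G)!$ and $2^{\#_{i,i}(G)}$ of the Poisson densities into $1/\circ(G)$, and tracking the powers of $\lambda$ and of each $w_i$, one obtains
\[ \proba\bigl(\MG^{\mathcal W+}=G\bigr)=e^{-\frac{\lambda}{2}\left(\sum_{i}w_i\right)^2}\,\lambda^{e(G)}\,\Bigl(\textstyle\prod_{i=1}^{\n}w_i^{\deg_G(V_i)}\Bigr)\,\frac{1}{\circ(G)}, \]
where $e(G)$ is the number of edges of $G$. So $\MG^{\mathcal W+}$ biased by $\circ(\MG^{\mathcal W+})$ carries weight $\propto\lambda^{e(G)}\prod_i w_i^{\deg_G(V_i)}$; substituting $w_i=p_i\sum_j w_j$ and using $\sum_i\deg_G(V_i)=2e(G)$ turns this into $\bigl(\lambda(\sum_j w_j)^2\bigr)^{e(G)}\prod_i p_i^{\deg_G(V_i)}$. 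Conditioning on $G$ connected with surplus $k$ fixes $e(G)=\n-1+k$, so the $\lambda$-factor is an inessential constant and the conditioned biased law, on connected multigraphs on $\{V_i\}_{1\le i\le\n}$ of surplus $k$, is $\proba(G)\propto\prod_{i=1}^{\n}p_i^{\deg_G(V_i)}$.

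It remains — and this is the technical heart — to show that the $(\P,k)$-graph with $\P=(p_1,\dots,p_\n,0,0,\dots)$ has exactly this law. By the construction of Section \ref{PkDef}, $\proba(G^{\P,k}=G)$ is proportional to $\E\bigl[\1_{\G_{(\star_i)_{1\le i\le 2k}}(T^\P)\setminus\{\star_i\}_{i\in\N}=G}\,\square_{\square,k}(T^\P)\bigr]$ with $T^\P=\AB(A^\P)$; on this event $G$ is the internal tree $\tau$ of $T^\P$ (the subtree spanned by $\{V_i\}_{i:p_i>0}$) together with the $k$ edges created by the gluing, so both the event and the integrand depend on $T^\P$ only through $\tau$ and the fathers $F_1,\dots,F_{2k}$ of $\star_1,\dots,\star_{2k}$. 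Two inputs then finish the proof. First, exactly as in the proofs of Lemma \ref{CBapp}(d) and Lemma \ref{constructDK}: on this event $\circ(\G_{(\star_i)}(T^\P))=\circ(G)$ (the surviving pendant $\star$-leaves contribute $1$) and $\prod_{i=1}^{k}\square_i(T^\P)$ equals, up to reordering, $\prod_{i=1}^{k}\square(G\setminus\{e_j\}_{1\le j<i})$, whence $\square_{\square,k}(T^\P)=2^k\,\proba(\CB(G)=\widehat T)$, where $\widehat T$ is $\tau$ with a pendant leaf $\star_j$ attached at $F_j$ for $1\le j\le 2k$. Second, a direct reading of Algorithm \ref{P-tree} — this is the Camarri--Pitman $\mathbf p$-tree together with its exchangeable layer of extra leaves, see \cite{IntroICRT1,IntroICRT2} — shows that $\proba\bigl(\tau(T^\P)=\tau,\ (F_j(T^\P))_{1\le j\le 2k}=(f_j)_{1\le j\le 2k}\bigr)$ is proportional to $\prod_i p_i^{d_i(\tau)-1}\prod_{j=1}^{2k}p_{f_j}$, with $d_i(\tau)$ the degree of $V_i$ in $\tau$; since $d_i(\tau)+\#\{j:f_j=V_i\}=\deg_G(V_i)$, this is a $\widehat T$-independent constant times $\prod_i p_i^{\deg_G(V_i)-1}$ — it sees neither which spanning tree of $G$ nor which labelling of the endpoints of the $k$ surplus edges produced $\widehat T$. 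Summing over all such $\widehat T$ and using $\sum_{\widehat T:\,\G_{(\star_i)}(\widehat T)=G}\proba(\CB(G)=\widehat T)=1$ (Lemma \ref{CBapp}(a),(c)) yields $\proba(G^{\P,k}=G)\propto\prod_i p_i^{\deg_G(V_i)-1}\propto\prod_i p_i^{\deg_G(V_i)}$, matching (b). I expect the bulk of the work to be this second input, together with checking that $\square_{\square,k}$ is genuinely a function of $(\tau,(F_j)_j)$ alone and that the telescoping behind Lemma \ref{constructDK} — the identity \eqref{1910} — survives the passage to the infinite $\P$-tree and the deletion of all but the first $2k$ leaves.
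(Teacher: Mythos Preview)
Your argument for (a) is correct and coincides with the paper's: both rely on the classical enumeration $\proba(\CM^\D=G)\propto 1/\circ(G)$, so biasing by $\circ$ yields the uniform law.

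For (b) you take a genuinely different route from the paper. The paper argues \emph{indirectly by limits}: it realises $\MG^{\mathcal W+}$ as a local limit of configuration models (Lemma \ref{CM=>MG}), uses (a) to identify the biased/conditioned configuration models with $(\D_n,k)$-graphs, and then invokes the already-established convergence \eqref{Tardif} of $(\D_n,k)$-graphs to $(\P,k)$-graphs to identify the two limits. In particular, the paper never computes the law of $G^{\P,k}$ along the way; it obtains $\proba(G^{\P,k}=G)\propto\prod p_i^{\deg_G(V_i)}$ only afterwards, as the corollary Lemma \ref{BIGFINAL}. You instead compute both laws directly and match them, so you are in effect proving Lemma \ref{BIGFINAL} first and deducing (b) from it --- the reverse logical order. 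Your approach is more self-contained (it does not need the convergence theorems of Section \ref{May the proof section be removed?}) and arguably more transparent, while the paper's approach is shorter given the machinery already in place.

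One point deserves scrutiny. Your ``second input'' --- that under the infinite $\P$-tree with finitely supported $\P$, the joint law of the internal tree $\tau$ and the fathers $(F_j)_{1\le j\le 2k}$ of the first $2k$ leaves is $\propto\prod_i p_i^{d_i(\tau)-1}\prod_j p_{f_j}$ --- is the crux, and you correctly flag it as such. It is plausible (it is the $\mathbf p$-analogue of the uniformity used in Lemma \ref{constructDK}), and the marginal law of $\tau$ is indeed the Camarri--Pitman $\mathbf p$-tree, but the claimed product form for the \emph{joint} law with the ordered fathers $F_1,\dots,F_{2k}$ is not quite off-the-shelf from \cite{IntroICRT1,IntroICRT2}: the $F_j=A_{Y_j-1}$ are the endpoints of successive stick-breaking branches, and one must argue (e.g.\ via exchangeability of the leaves or by an explicit computation on finite sequences) that conditionally on $\tau$ they are i.i.d.\ with law $\mathbf p$. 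This is doable, but is a genuine lemma rather than ``a direct reading'' of the algorithm. Everything else in your sketch --- that $\square_{\square,k}(T^\P)$ depends only on $(\tau,(F_j))$ because the remaining pendant leaves contribute neither to $\circ$ nor to any $\square_i$, and that the telescoping of Lemma \ref{CBapp}(d) carries over --- is fine.
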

\begin{remark} The bias is not really important as typically those graphs are studied in a regime where with high probability the multigraph is a graph. Also removing this bias only remove the term $\circ(\G_{(\star_i)_{1\leq i \leq 2k}}(T))$ in Section \ref{DkDef} which does not change our proofs.
\end{remark}
\begin{proof} (a) is a classic and is easy to obtain from a quick enumeration. So we focus on (b). The main idea is that, on the one hand multiplicative multigraph are limits of the configuration model, and on the other hand $(\P,k)$-graph are limits of $(\D,k)$-graph. Thus by identification, (b) follows. Let us detail:

Fix $k,\mathcal W ,\P \in \Omega_{\MG}$ as in (b). Let $(\D^n)_{n\in \N}$ be a sequence of $\Omega_{\CM}$ as in Lemma \ref{CM=>MG}.
Then write $\CM^{\W,k}$ for the random multigraph $\MG^{\mathcal W+}$ biased by $\circ(\MG^{\mathcal W})$ and conditioned at being connected and having surplus $k$. Also, write for $n\in \N$, $\CM^{\D^n, k}$ for the random multigraph $\CM^{\D^n}$ biased by $\circ(\CM^{\D^n})$ conditioned on the fact that the subgraph of $\CM^{\D^n}$ on  $(V_i)_{1\leq i \leq \n}$ is connected and have surplus $k$. By Lemma \ref{CM=>MG}, we have,
\begin{equation}  (\#_{i,j}(\CM^{\D^n,k})  )_{1\leq i,j\leq \n}\limit^{(d)}  (\#_{i,j}(\MG^{\mathcal W,k}))_{1\leq i,j\leq \n}. \label{9/12/18h} \end{equation}

Then, for every $n\in \N$ let $S^n+2\n$ be the number of vertices that are in the connected component of $(V_i)_{1\leq i \leq \n}$ in $\CM^{\D^n}$. Then let $\D^{n-}:=(d_1^n,\dots, d_\n^n,1,\dots, 1 )$ with $S^n$ number 1 at the end. It is well known that for every $n\in \N$, conditioned on $S^n$, $\CM^{\D^n,k}$ have the same law as $\CM^{\D^{n-}}$ (where the vertices outside $(V_i)_{1\leq i \leq \n}$ in $\CM^{\D^n}$ have been relabeled). More precisely,
\[   (\#_{i,j}(\CM^{\D^n,k}) )_{1\leq i,j\leq \n} \egale^{(d)}  (\#_{i,j}(\CM^{\D^{n-}})  )_{1\leq i,j\leq \n}. \]
Therefore, it directly follows from \eqref{9/12/18h}, that if for $n\in \N$, $\CM^{\D^{n-},k}$ be the random multigraph $\CM^{\D^{n-}}$ biased by $\circ(\CM^{\D^{n-}})$ and conditioned at being connected, then
\begin{equation}  (\#_{i,j}(\CM^{\D^{n-,k}})  )_{1\leq i,j\leq \n}\limit^{(d)} (\#_{i,j}(\MG^{\mathcal W,k}))_{1\leq i,j\leq \n}. \label{9/12/19h} \end{equation}

Next let for $n\in \N$, $\D_n\in \OmegaD$ be the sequence $(d_1^n-1,\dots, d_\n^n-1,0,\dots, 0,0,\dots, 0)$ where we added $S^n+2k$ numbers $0$ at the end. We have by (a) for every $n\in \N$, 
\[   (\#_{i,j}(G^{\Dn,k}) )_{1\leq i,j\leq \n}  \egale^{(d)}   (\#_{i,j}(\CM^{\D^{n-},k})  )_{1\leq i,j\leq \n}\]
Therefore by \eqref{9/12/19h},
\begin{equation}     (\#_{i,j}(G^{\Dn,k})  )_{1\leq i,j\leq \n} \limit^{(d)}  (\#_{i,j}(\MG^{\mathcal W,k}) )_{1\leq i,j\leq \n}.  \label{9/12/21hb} \end{equation}

Finally note that $\Dn\ply \P$. So, by \eqref{Tardif} and Lemma \ref{reconstructTHM}, as $n\to \infty$ the subtree of $T^{\D_n,k}$ spanned by $\{V_i\}_{1\leq i \leq \n}\cup\{\star_i\}_{1\leq i \leq 2k}$ converges weakly toward the subtree of $T^{\P_n,k}$ spanned by the same vertices. Therefore, we have by gluing $(\star_1,\star_2),\dots, (\star_{2k-1},\star_{2k})$, then counting the edges,
\begin{equation*}     (\#_{i,j}(G^{\Dn,k})  )_{1\leq i,j\leq \n} \limit^{(d)}  (\#_{i,j}(G^{\P,k}) )_{1\leq i,j\leq \n}.\end{equation*}
And \eqref{9/12/21hb} concludes the proof.
\end{proof}
To conclude the section let us  compute the law of $(\P,k)$-graph.
\begin{lemma} \label{BIGFINAL} Let $k\in \N$. Let $(p_1,\dots, p_\n,0,0\dots)\in \OmegaP$. We have for every connected multigraph $G$ on $\{V_i\}_{1\leq i \leq \n}$ with surplus $k$, writing $\alpha$ for proportional,
\[ \proba(G^{\P,k}=G)\alpha \prod_{1\leq i,j \leq \n} (p_i p_j)^{\#_{i,j}(G)}. \]
\end{lemma}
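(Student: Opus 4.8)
The plan is to identify the exact law of $G^{\P,k}$ via Lemma \ref{Connections} (b) and then to read off the formula from the explicit Poissonian description of multiplicative multigraphs. I would fix $\lambda=1$ and set $\mathcal W:=(1,p_1,\dots,p_\n)\in\Omega_{\MG}$; since $\sum_i p_i=1$, the normalised weights occurring in Lemma \ref{Connections} (b) are the $p_i$ themselves, so that lemma says $G^{\P,k}$ has the law of $\MG^{\mathcal W+}$ biased by $\circ(\MG^{\mathcal W+})$ and conditioned on being connected with surplus $k$. Two easy points make this meaningful and I would record them first: the conditioning event has positive probability since every pairwise Poisson mean is positive; and $G^{\P,k}$ is a.s.\ a connected multigraph on the \emph{full} vertex set $\{V_1,\dots,V_\n\}$ of surplus $k$, because every $V_i$ with $p_i>0$ a.s.\ occurs in the driving sequence, gluing $k$ pairs of leaves of a tree raises the surplus from $0$ to $k$, and deleting the pendant $\star$-leaves changes neither connectivity nor surplus. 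Hence, for every connected multigraph $G$ on $\{V_i\}_{1\le i\le\n}$ of surplus $k$,
\[ \proba(G^{\P,k}=G)\ \alpha\ \circ(G)\,\proba(\MG^{\mathcal W+}=G), \]
the proportionality running over this finite family of $G$'s.

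The remaining step is a bare-hands expansion, which I expect to be routine. The nonzero entries of $\MG^{\mathcal W+}$ are independent Poisson variables, with $\#_{i,i}$ of mean $p_i^2/2$ and $\#_{i,j}$ ($i<j$) of mean $p_ip_j$, so
\[ \proba(\MG^{\mathcal W+}=G)\ \alpha\ \prod_{i}\frac{(p_i^2/2)^{\#_{i,i}(G)}}{\#_{i,i}(G)!}\ \prod_{1\le i<j\le\n}\frac{(p_ip_j)^{\#_{i,j}(G)}}{\#_{i,j}(G)!} \]
(the omitted constant $\prod_i e^{-p_i^2/2}\prod_{i<j}e^{-p_ip_j}$ not depending on $G$). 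Multiplying by $\circ(G)=\prod_i 2^{\#_{i,i}(G)}\prod_{i\le j}\#_{i,j}(G)!$ cancels every factorial and every power of $2$, leaving, up to a $G$-independent constant, $\prod_i p_i^{2\#_{i,i}(G)}\prod_{i<j}(p_ip_j)^{\#_{i,j}(G)}$, which by $\deg_G(V_i)=2\#_{i,i}(G)+\sum_{j\ne i}\#_{i,j}(G)$ equals $\prod_{i=1}^\n p_i^{\deg_G(V_i)}$ --- that is, exactly the right-hand side $\prod_{1\le i,j\le\n}(p_ip_j)^{\#_{i,j}(G)}$ of the statement, once each unordered pair $\{i,j\}$ is counted once and each loop at $V_i$ is read as contributing $p_i^2$. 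Combining with the previous display finishes the proof.

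I do not anticipate a genuine obstacle; the argument is a single cancellation. The only places needing a line of care have been flagged above: the a.s.\ structure of $G^{\P,k}$ (so that Lemma \ref{Connections} (b) is applied on the correct state space) and the bookkeeping convention for $\circ$. Taking $\lambda=1$ also spares one from absorbing a factor $\lambda^{|E(G)|}$ into the constant, though that too would be immediate since $|E(G)|=\n-1+k$ is fixed on connected surplus-$k$ graphs. Should one wish to avoid Lemma \ref{Connections} (b) altogether, the same formula follows directly from the definition of the $(\P,k)$-graph by using Lemma \ref{CBapp} (d) to rewrite $\square_{\square,k}(T)=2^k\,\proba(\CB(G)=\widetilde T)$ with $\widetilde T$ the finite part of $T$ and summing over $T$; but that route still requires the probability that a $\P$-tree restricted to $\{V_1,\dots,V_\n\}\cup\{\star_1,\dots,\star_{2k}\}$ equals a prescribed tree, so it is less direct.
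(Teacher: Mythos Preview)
Your proof is correct and follows the same route as the paper's: invoke Lemma \ref{Connections} (b) to identify $G^{\P,k}$ with the $\circ$-biased, conditioned multiplicative multigraph, write down the explicit Poisson probability for $\MG^{\mathcal W+}$, and observe that the factorials and powers of $2$ cancel against $\circ(G)$. The paper's own proof is a two-line compression of exactly this computation; your added remarks on well-definedness of the conditioning and on the bookkeeping for $\circ$ only make the argument more careful, not different.
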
 
\begin{proof} Keep the notations of Lemma \ref{Connections} (b). By definition of $\Omega_{\MG}^{\mathcal W+}$, we have 
\[ \proba(\Omega_{\MG}^{\mathcal W+}=G)=\prod_{1\leq i<j\leq \n} \frac{(\lambda p_i p_j)^{\#_{i,j}(G)}e^{-\lambda p_i p_j}}{\#_{i,j}(G)!}\prod_{1\leq i \leq \n} \frac{(\lambda p_i^2/2)^{\#_{i,i}(G)}e^{-\lambda p_i^2/2}}{\#_{i,i}(G)!}.\]
So the result follows from Lemma \ref{Connections} (b).
\end{proof}
\begin{remark} $\bullet$  When $k=0$ the result is well known and is a classical definition for $\P$-trees. \\
$\bullet$ When the weight of the edges is not multiplicative, one can still construct similar multigraphs. Moreover, Lemma \ref{BIGFINAL} is still true in this case. For $k=0$, this relates those models with the general spanning trees constructed by Aldous--Br\"oder algorithm \cite{AaldousBroder,AldousBbroder}.
\end{remark}

\bibliographystyle{plain}

\appendix
\section{Appendix}
\subsection{$\R$-tree reconstruction problem} \label{A.1}
Recall that a $\R$-tree is a loopless geodesic metric space. If $\T$ is a $\R$-tree, we say that $x\in \T$ is a leaf of $\T$ if $\T\backslash \{x\}$ is connected. Let $(\T,d)$ be a $\R$-tree with leaves $\{\star_i\}_{1\leq i \leq N}$. In this section we reconstruct a $\R$-tree isometric to $\T$ from $(d_{i,j})_{1\leq i,j \leq N}:=(d(\star_i,\star_j))_{1\leq i,j \leq N}$.

For every $a,b\in \T$ let $\llbracket a,b\rrbracket $ be the geodesic path between $a$ and $b$. Since $\T$ is a $\R$-tree note that for every $1\leq a \neq b\neq c\leq N$ there exists a unique vertex $\star_{a,b,c}$ in $\llbracket \star_a,\star_b \rrbracket \cap \llbracket \star_a,\star_c \rrbracket\cap\llbracket \star_b,\star_c \rrbracket$.

\begin{lemma} \label{ZOURF} For every $1\leq a \neq b \neq c\leq N$, $2d(\star_a,\star_{a,b,c})= d_{a,b}+d_{a,c}-d_{b,c}$. \end{lemma}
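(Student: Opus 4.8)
The statement is the standard ``median point'' computation in an $\mathbb{R}$-tree, and the whole proof rests on one elementary observation: if a point $x$ lies on a geodesic segment $\llbracket u,v\rrbracket$ of a length space, then $d(u,v)=d(u,x)+d(x,v)$. Combined with the defining property of $\star_{a,b,c}$ — namely that it lies simultaneously on the three geodesics $\llbracket\star_a,\star_b\rrbracket$, $\llbracket\star_a,\star_c\rrbracket$ and $\llbracket\star_b,\star_c\rrbracket$ (whose existence and uniqueness is already granted just above the statement, since $\T$ is a loopless geodesic space) — the identity falls out by linear algebra on three scalar equations.

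\textbf{Steps.} First I would record the three additivity identities obtained by splitting each geodesic at $\star_{a,b,c}$: since $\star_{a,b,c}\in\llbracket\star_a,\star_b\rrbracket$ we get $d_{a,b}=d(\star_a,\star_{a,b,c})+d(\star_{a,b,c},\star_b)$; since $\star_{a,b,c}\in\llbracket\star_a,\star_c\rrbracket$ we get $d_{a,c}=d(\star_a,\star_{a,b,c})+d(\star_{a,b,c},\star_c)$; and since $\star_{a,b,c}\in\llbracket\star_b,\star_c\rrbracket$ we get $d_{b,c}=d(\star_b,\star_{a,b,c})+d(\star_{a,b,c},\star_c)$. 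Then I would add the first two identities and subtract the third: the two occurrences of $d(\star_{a,b,c},\star_c)$ cancel, as do $d(\star_{a,b,c},\star_b)$ and $d(\star_b,\star_{a,b,c})$, leaving exactly $d_{a,b}+d_{a,c}-d_{b,c}=2\,d(\star_a,\star_{a,b,c})$, which is the claim.

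\textbf{Main difficulty.} There is essentially none: the only non-formal ingredient is the geodesic-additivity fact, which is immediate from the definition of a geodesic, and the existence/uniqueness of $\star_{a,b,c}$ in the triple intersection is already assumed in the text preceding the lemma. If one wished to make that existence self-contained rather than quoting it, the cleanest route would be to \emph{define} $\star_{a,b,c}$ as the point at distance $\tfrac12(d_{a,b}+d_{a,c}-d_{b,c})$ from $\star_a$ along $\llbracket\star_a,\star_b\rrbracket$ and then verify, using that $\T$ is loopless, that this point also lies on $\llbracket\star_a,\star_c\rrbracket$ and $\llbracket\star_b,\star_c\rrbracket$; but given the setup of the appendix this extra verification is not needed, and the proof reduces to the three-line cancellation above.
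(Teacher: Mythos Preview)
Your proof is correct and matches the paper's own argument exactly: the paper also writes down the three geodesic-additivity identities $d_{a,b}=d(\star_a,\star_{a,b,c})+d(\star_{a,b,c},\star_b)$, $d_{a,c}=d(\star_a,\star_{a,b,c})+d(\star_{a,b,c},\star_c)$, $d_{b,c}=d(\star_b,\star_{a,b,c})+d(\star_{a,b,c},\star_c)$, and concludes by the same linear combination.
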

\begin{proof} Note that $d_{a,b}=d(\star_a,\star_{a,b,c})+d(\star_{a,b,c},\star_b)$, and similarly $d_{b,c}=d(\star_b,\star_{a,b,c})+d(\star_{a,b,c},\star_c)$ and $d_{a,c}=d(\star_a,\star_{a,b,c})+d(\star_{a,b,c},\star_c)$. The desired equality follows by sum. 
\end{proof} To reconstruct $T$ we reconstruct recursively for $1\leq n \leq N$ the subtree spanned by $\{\star_i\}_{1\leq i \leq n}$, which is $\T_n:=\bigcup_{1\leq i,j\leq n} \llbracket \star_i,\star_j\rrbracket $. It is easy to check that for $1\leq n\leq N$, $(\T_n, d)$ is a $\R$-tree. Moreover, note that $\T_{n+1}=\T_n\cup \llbracket W_n, \star_{n+1}\rrbracket $, where $W_n$ is the closest point from $\star_{n+1}$ on $\T_n$. Therefore, it is enough to reconstruct $(W_i)_{1\leq i < N}$ and $(d(W_n,\star_{n+1}))_{1\leq n <N}$. This suggest the following construction. Below, $(\beta_i)_{i\in \N}$ is the canonical base of $\R^{+\N }$.

\begin{algorithm} \label{reconstruct} Reconstruction of a $\R$-tree on $(\R^{+\N},\| \|_{\infty})$  from $M=(d_{i,j})_{1\leq i,j \leq N}$.
\begin{compactitem}
\item[-] Let $\star^M_1:=0$. Let $\T^M_1=(\{\star^M_1\},\emptyset)$. 
\item[-] Let $\star^M_2:=(0,d(\star^M_1,\star^M_2))$ Let $\T^M_2:=\{x\beta_1, 0\leq x\leq d(\star^M_1,\star^M_2)\}$.
\item[-] For every $2\leq n <N$: 
\begin{compactitem}
\item[-] Let $1\leq b^M_{n}\neq c^M_{n}\leq n$ be the smallest integers (for some predetermined order) that minimize $d_{n+1,b^M_n}+d_{n+1,c^M_n}-d_{b^M_n,c^M_n}$. 
\item[-] Let $W^M_{n}$ be the vertex of $\T^M_n$ at distance $d_{b^M_n,n+1}+d_{b^M_n,c^M_n}-d_{n+1,c^M_n}$ of $\star^M_{b_n}$ and at distance $d_{c^M_n,n+1}+d_{c^M_n,b^M_n}-d_{n+1,b^M_n}$ of $\star^M_{c^M_n}$. (See below for existence and unicity.)
\item[-] Let $\T^M_{n+1}:= \T^M_n\cup \{W^M_n+x\beta_{n},0\leq x\leq d_{n+1,b^M_n}+d_{n+1,c^M_n}-d_{b^M_n,c^M_n}\}$.
\end{compactitem}
\item[-] Let $\T^M:=\T^M_N$. 
\end{compactitem}
\end{algorithm}
\begin{remark} The idea of constructing subtrees on $(\R^{+\N},d_{\infty})$ comes from Aldous \cite{Aldous1}.
\end{remark}
\begin{lemma} \label{Lemme de reconstruction} Let $(\T,d)$ be a $\R$-tree with leaves $\{\star_i\}_{1\leq i \leq N}$. Let $M=(d(\star_i,\star_j))_{1\leq i,j \leq N}$. Then: 
\begin{compactitem}
\item[a)] For every $1\leq n\leq N$, $\T^M_n$ is well defined.
\item[b)] $(\T,d,\star_1,\dots, \star_N)$ and $(\T^M,d_{\infty},\star^M_1,\dots, \star^M_N)$ are isometric (see Section \ref{PointedGH}).
\end{compactitem}
\end{lemma}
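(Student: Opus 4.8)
\emph{Proof proposal.} The plan is to prove both parts at once by induction on $n$, $1\le n\le N$, carrying along the statement: $\T^M_n$ is well defined, and there is an isometry $\phi_n$ from $(\T^M_n,d_\infty)$ onto $(\T_n,d)$ with $\phi_n(\star^M_i)=\star_i$ for all $1\le i\le n$, and moreover $\phi_{n+1}$ agrees with $\phi_n$ on $\T^M_n$. Part (a) is then contained in this induction hypothesis, and part (b) is the case $n=N$. The base cases $n=1,2$ are immediate: $\T^M_1=\{0\}$ and $\T_1=\{\star_1\}$, while $\T^M_2$ and $\T_2=\llbracket\star_1,\star_2\rrbracket$ are both segments of length $d_{1,2}$, so there is a unique isometry between them sending endpoints to endpoints.

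For the inductive step I would fix $2\le n<N$, assume $\phi_n$ is given, and let $W_n$ be the metric projection of $\star_{n+1}$ onto $\T_n$; this exists and is unique since $\T_n$ is a compact connected subset of an $\R$-tree, and it is the point already used in the discussion preceding Algorithm \ref{reconstruct}. The crucial step will be to show that the pair $(b^M_n,c^M_n)$ selected by the algorithm ``sees'' $W_n$, in the sense that $\star_{n+1,b^M_n,c^M_n}=W_n$. For this I would argue as follows. By Lemma \ref{ZOURF} applied with $a=n+1$, for every $1\le b\neq c\le n$ we have $d_{n+1,b}+d_{n+1,c}-d_{b,c}=2\,d(\star_{n+1},\star_{n+1,b,c})$; since $\star_{n+1,b,c}\in\llbracket\star_b,\star_c\rrbracket\subseteq\T_n$, this quantity is always at least $2\,d(\star_{n+1},W_n)$. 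Next, because every geodesic from $\star_{n+1}$ to a point of $\T_n$ passes through $W_n$, both $\llbracket\star_{n+1},\star_b\rrbracket$ and $\llbracket\star_{n+1},\star_c\rrbracket$ contain $W_n$; hence if in addition $W_n\in\llbracket\star_b,\star_c\rrbracket$ then $W_n$ lies in all three of the geodesics defining $\star_{n+1,b,c}$, so $W_n=\star_{n+1,b,c}$ and equality holds above, while conversely equality forces $\star_{n+1,b,c}=W_n$ (two points of the segment $\llbracket\star_{n+1},\star_b\rrbracket$ equidistant from $\star_{n+1}$ coincide) and hence $W_n\in\llbracket\star_b,\star_c\rrbracket$. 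Finally such a minimizing pair exists: $W_n\in\T_n=\bigcup_{1\le i,j\le n}\llbracket\star_i,\star_j\rrbracket$, and even if $W_n$ equals some $\star_b$ it still lies on $\llbracket\star_b,\star_c\rrbracket$ for any $c\le n$ with $c\neq b$ (which exists because $n\ge 2$). Thus the minimum equals $2\,d(\star_{n+1},W_n)$ and every minimizing pair, in particular $(b^M_n,c^M_n)$, satisfies $\star_{n+1,b^M_n,c^M_n}=W_n$.

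Granting this, I would apply Lemma \ref{ZOURF} once more, now with $a=b^M_n$ and with $a=c^M_n$, to identify $W_n$ as the point of $\T_n$ whose distances to $\star_{b^M_n}$ and $\star_{c^M_n}$ are precisely the quantities extracted from $M$ in Algorithm \ref{reconstruct}; this point is unique because a point of an $\R$-tree whose distances to two given points add up to the distance between those points must lie on the geodesic joining them, where it is pinned down by either one of the two distances. Transporting through $\phi_n$ then shows that $W^M_n:=\phi_n^{-1}(W_n)$ is well defined, which is part (a) at level $n+1$. Since $\T_{n+1}=\T_n\cup\llbracket W_n,\star_{n+1}\rrbracket$ with $\llbracket W_n,\star_{n+1}\rrbracket$ a segment of length $d(\star_{n+1},W_n)$ glued to $\T_n$ at $W_n$, and $\T^M_{n+1}$ is $\T^M_n$ with a segment of that same length glued at $W^M_n$ along the fresh coordinate $\beta_n$, I would extend $\phi_n$ to $\phi_{n+1}$ by sending $W^M_n+x\beta_n$ to the point of $\llbracket W_n,\star_{n+1}\rrbracket$ at distance $x$ from $W_n$ (so $\phi_{n+1}(\star^M_{n+1})=\star_{n+1}$), and then check that $\phi_{n+1}$ is an isometry; the verification uses that the appended segment lies along a coordinate not previously in play, so that $d_\infty$-distances between points of the new segment and points of $\T^M_n$ decompose additively through $W^M_n$, matching the tree metric on $\T_{n+1}$ — this is the embedding device of Aldous \cite{Aldous1}. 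This closes the induction.

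The hardest part, I expect, is the middle step: arguing rigorously that minimizing $d_{n+1,b}+d_{n+1,c}-d_{b,c}$ selects a pair whose median point is exactly the projection $W_n$ of $\star_{n+1}$, and, in the same vein, pinning down precisely why the iterative $\beta_n$-coordinate construction realizes the metric $d_\infty$ of the abstract tree (so that each $\phi_{n+1}$ is a genuine isometry, not merely a bijection respecting the combinatorial tree structure and the marked leaves). The remaining ingredients — the base case, uniqueness of $W^M_n$, and the bookkeeping of the nested isometries — should be routine once these two points are settled.
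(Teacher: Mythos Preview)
Your proposal is correct and follows essentially the same approach as the paper: both prove (a) and (b) simultaneously by induction on $n$, using Lemma \ref{ZOURF} first to show that the minimizing pair $(b^M_n,c^M_n)$ locates the projection $W_n$ of $\star_{n+1}$ onto $\T_n$, then to pin down $W_n$ by its distances to $\star_{b^M_n}$ and $\star_{c^M_n}$, transport through the inductive isometry, and extend along the fresh coordinate $\beta_n$. The only cosmetic differences are the direction of your isometry $\phi_n$ (from $\T^M_n$ to $\T_n$ rather than the reverse) and your slightly more explicit discussion of why every minimizer works and why the $d_\infty$-extension is isometric; the paper compresses these into ``we may assume $b_n=b^M_n$, $c_n=c^M_n$'' and a one-line extension of $\phi_n$.
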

\begin{proof} We prove by induction that for $1\leq n\leq N$, $\T_n^M$ is well defined and that  $(\T_n,d,\star_1,\dots, \star_n)$ and $(\T_n^M,d_{\infty},\star^M_1,\dots, \star^M_n)$ are isometric. First if $n=1$ or $n=2$ then the result is obvious. Then let $2\leq n<N$ such that  $\T_n^M$ is well defined and such that there exists an isometry $\phi_n$ from $(\T_n,d,\star_1,\dots, \star_n)$ to $(\T_n^M,d_{\infty},\star^M_1,\dots, \star^M_n)$.

Recall that $\T_n=\bigcup_{1\leq i,j\leq n} \llbracket \star_i,\star_j\rrbracket$, and that $W_n$ is the closest point from $\star_{n+1}$ on $\T_n$. So there exist $1\leq b_n\neq c_n\leq n$ such that $W_n\in  \llbracket \star_{b_n},\star_{c_n} \rrbracket$. Hence $W_n=\star_{n+1,b_n,c_n}$. 
Then by Lemma \ref{ZOURF},
\begin{equation} d(\star_{n+1},W_n)=d(\star_{n+1},\star_{n+1,b_n,c_n})=d_{n+1,b_n}+d_{n+1,c_n}-d_{b_n,c_n}. \label{BADABADABADA} \end{equation}
Also, by Lemma \ref{ZOURF}, since $W_n$ is the closest point from $\star_{n+1}$ on $\T_n$,
\[ d(\star_{n+1},W_n)\leq \min_{1\leq b\leq c\leq n}d(\star_{n+1},\star_{n+1,b,c})= \min_{1\leq b\leq c\leq n} d_{n+1,b}+d_{n+1,c}-d_{b,c}. \]
Therefore, we may assume that $b_n=b^M_n$ and that $c_n=c^M_n$.

Furthermore, since $W_n=\star_{n+1,b_n,c_n}$, 
\begin{equation} d(W_n,\star_{b_n})=d_{b_n,n+1}+d_{b_n,c_n}-d_{n+1,c_n}\quad ; \quad d(W_n,\star_{c_n}) = d_{c_n,n+1}+d_{c_n,b_n}-d_{n+1,b_n}.\label{21h/5/12} \end{equation}
$W_n$ is the only vertex of $\T_n$ satisfying \eqref{21h/5/12}. Indeed, any vertex $V$ satisfying \eqref{21h/5/12} must also satisfy 
\[ d(V,\star_{b_n})+d(V,\star_{c_n})=d(\star_{b_n},\star_{c_n}),\] 
and so must be $W_n$, the only vertex of $\llbracket \star_{b_n},\star_{c_n} \rrbracket$ at distance $d_{b_n,n+1}+d_{b_n,c_n}-d_{n+1,c_n}$ of $\star_{b_n}$. 

Then, by definition of $\phi_n$ and \eqref{21h/5/12}, $\phi_n(W_n)$ is the only vertex of $T^m_n$ satisfying
\begin{equation*} d_\infty(\phi_n(W_n),\star^M_{b_n})=d_{b_n,n+1}+d_{b_n,c_n}-d_{n+1,c_n}\quad ; \quad d_\infty(\phi_n(W_n),\star^M_{b_n}) = d_{c_n,n+1}+d_{c_n,b_n}-d_{n+1,b_n}. \end{equation*}
Therefore, $W_n^M$ and thus $\T_n^{M+1}$ are well defined.

Finally recall that $\T_{n+1}=\T_n\cup \llbracket W_n,\star_{n+1}\rrbracket$. Then by definition of $\T^M_{n+1}$, \eqref{BADABADABADA} and $\phi_n(W_n)=W^M_n$, we have $\T^M_{n+1}= \T^M_n\cup \{\phi_n(W_n)+x\beta_{n}, 0\leq x\leq d(W_n,\star_{n+1})\}$. Also both union are disjoint, so one can extend $\phi_n$ to an isometry $\phi_{n+1}$ from $\T_{n+1}$ to $\T^M_{n+1}$ such that for every $x\in \llbracket W_n,\star_{n+1}\rrbracket$, $\phi_{n+1}(x):=\phi_n(W_n)+d(W_n,x)\beta_n$. This concludes the proof.
\end{proof}
We now prove a corollary, which we use to prove Theorem \ref{THM2}.
\begin{lemma} \label{reconstructTHM} Let $((\T_n,d_n))_{n\in \N}$ be a sequence of $\R$-trees with leaves $\{\star^n_i\}_{1\leq i \leq N}$. Assume that 
\[ \forall 1\leq i\neq j\leq n,\quad d_n(\star^n_i,\star^n_j)\limit d_{i,j}\in \R^{+*} .\]
Then there exist a unique $N$-pointed $\R$-tree $(\T,d,(\star_1,\dots, \star_N))$ up to isometry such that for every $1\leq i,j \leq N$, $d(\star_i,\star_j)=d_{i,j}$. Moreover, $((\T_n,d_n,(\star^n_i)_{1\leq i \leq N}))_{n\in \N}$ converges for the $N$-pointed Gromov--Hausdorff topology (see Section \ref{PointedGH}) toward $(\T,d,(\star_i)_{1\leq i \leq N})$.
\end{lemma}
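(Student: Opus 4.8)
The plan is to reduce the statement to Lemma \ref{Lemme de reconstruction} by running Algorithm \ref{reconstruct} on both the limit matrix $M=(d_{i,j})_{1\leq i,j\leq N}$ and on each of the matrices $M_n=(d_n(\star^n_i,\star^n_j))_{1\leq i,j\leq N}$, and then checking that the output of the algorithm depends continuously on the input matrix (with respect to the $N$-pointed Gromov--Hausdorff topology) at matrices coming from an actual $\R$-tree. First I would build the limit object: since the $d_{i,j}$ are limits of $\R$-tree distances, each $M_n$ is a tree pseudo-metric (it satisfies the four-point condition), and the four-point condition passes to the limit, so $M$ is itself a tree metric on $\{1,\dots,N\}$ with all off-diagonal entries strictly positive. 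Hence $M$ is realized by some finite $\R$-tree, and Lemma \ref{Lemme de reconstruction} (b) tells us $\T^M$, the output of Algorithm \ref{reconstruct} on $M$, is such a realization; uniqueness up to isometry of a minimal $\R$-tree spanned by $N$ labelled points with prescribed pairwise distances is standard (two such are isometric via the map matching the marked points, because an $\R$-tree is the convex hull of its leaves and the Gromov products determine all the branchpoint distances). This gives the first sentence of the conclusion.

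For the convergence, I would argue that for $n$ large enough the combinatorial choices made by Algorithm \ref{reconstruct} on $M_n$ agree with those made on $M$. The only discrete decisions in the algorithm are, at each step $2\leq m<N$, the choice of the minimizing pair $(b^M_m,c^M_m)$ of $d_{m+1,b}+d_{m+1,c}-d_{b,c}$. The subtle point is that this $\argmin$ need not be unique for $M$, so I would not claim the indices stabilize; instead I would observe that Algorithm \ref{reconstruct} run on $M$ is insensitive to the choice among minimizers (any minimizing pair $(b,c)$ locates the same attachment point $W^M_m$ and the same pendant length, by Lemma \ref{ZOURF} applied in $\T^M$), and that for $n$ large the minimizers of $M_n$ are a subset of the minimizers of $M$ — because the quantity $d_{m+1,b}+d_{m+1,c}-d_{b,c}$ is a continuous function of the matrix and the finitely many non-minimizing pairs for $M$ stay bounded away from the minimum for $n$ large. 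So, up to relabeling which minimizer the fixed predetermined order selects, the pairs chosen for $M_n$ eventually coincide with admissible choices for $M$.

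Once the discrete data agree, everything left is continuous: the attachment distances $d_{b,m+1}+d_{b,c}-d_{m+1,c}$ and the pendant lengths $d_{m+1,b}+d_{m+1,c}-d_{b,c}$ used to build $\T^{M_n}_{m+1}$ from $\T^{M_n}_m$ converge to the corresponding quantities for $M$, so by induction on $m$ the embedded trees $\T^{M_n}_m\subset(\R^{+\N},\|\cdot\|_\infty)$ converge to $\T^M_m$ in the Hausdorff distance of $(\R^{+\N},\|\cdot\|_\infty)$, and the marked points $\star^{M_n}_i$ converge to $\star^M_i$. Since $N$-pointed GH distance between two subsets of the common ambient space $(\R^{+\N},\|\cdot\|_\infty)$ is bounded by the Hausdorff distance (with the marked points already matched up to a small error, which one absorbs by a further isometry of the ambient space moving $\star^{M_n}_i$ exactly onto $\star^M_i$), we get $(\T^{M_n},\|\cdot\|_\infty,(\star^{M_n}_i))\to(\T^M,\|\cdot\|_\infty,(\star^M_i))$ for the $N$-pointed GH topology. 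Combining with Lemma \ref{Lemme de reconstruction} (b) applied to each $(\T_n,d_n)$ and to $(\T,d):=(\T^M,\|\cdot\|_\infty)$ yields the claim. The main obstacle is exactly the non-uniqueness of the $\argmin$ at the limit matrix; handling it cleanly — by showing the algorithm's geometric output is independent of the selected minimizer, rather than trying to force index stabilization — is the one place where a little care is needed, and it is where I would spend most of the writing.
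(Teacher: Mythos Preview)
Your plan is correct, and it takes a genuinely different route from the paper's proof. The paper never invokes the four-point condition and never tries to run Algorithm~\ref{reconstruct} directly on the limit matrix $M$. Instead it argues by compactness: the embedded trees $\T^{M_n}$ all sit in a common bounded subset of $(\R^{+N}\times\{0\}^\N,\|\cdot\|_\infty)$, so the attachment points $\{W^{M_n}_m\}_n$ are tight; one extracts a subsequence along which every $W^{M_{n_i}}_m$ converges, observes that along this subsequence the whole algorithm converges, and checks that the resulting limit tree has the prescribed leaf distances $d_{i,j}$. Existence of the limit tree thus comes for free from the same compactness argument, and full convergence follows because every subsequential limit must be isometric to the unique such tree.

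Compared to this, your approach front-loads the existence (via the four-point condition, which is external to the paper but standard) and then argues continuity of the algorithm step by step. What you buy is a more constructive and transparent proof of convergence; what you pay is exactly the care you identified around the $\argmin$---the paper sidesteps this entirely by never needing to match the discrete choices made for $M_n$ with those made for $M$. Your handling of the non-unique $\argmin$ is correct (the attachment point $W^M_m$ is indeed independent of the minimizing pair when $M$ is a tree metric, by the isometry in Lemma~\ref{Lemme de reconstruction}), but note that the induction step ``$W^{M_n}_m\to W^M_m$'' will in practice still use a small compactness-plus-uniqueness argument inside $\T^M_m$, so the two proofs are closer in spirit than they first appear. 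One minor wrinkle: your last sentence about absorbing the marked-point error ``by a further isometry of the ambient space'' does not quite work in $(\R^{+\N},\|\cdot\|_\infty)$; the clean fix is to bound $d^N_{\GH}$ by the distortion of the obvious correspondence, which is at most the Hausdorff distance plus $\max_i\|\star^{M_n}_i-\star^M_i\|_\infty$.
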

\begin{proof} First uniqueness follows from Lemma \ref{Lemme de reconstruction}. Let us prove existence.
For every $n\in \N$ let $M^n:=(d^n(\star^n_i,\star^n_j))_{1\leq i,j \leq N}$. Similarly let $M=(d_{i,j})_{1\leq i,j \leq N}$. Note that for every $n\in \N$, 
\begin{equation} \T^{M_n}\subset \left \{x\in \R^{+N}\times \{0\}^\N, d_\infty(0,x)\leq \max_{1\leq j\leq N} d^n(\star^n_1,\star^n_i) \right \}. \label{1h/6/12}\end{equation}
Thus for every $1\leq m < N$, $\{W^{M_n}_m \}_{n\in \N}$ is tight.

Let $(n_i)_{i\in \N}$ be an increasing sequence of integer such that for every $1\leq m < N$, $(W^{M_{n_i}}_m )_{i\in \N}$ converges toward $W^\infty_m$. Then, intuitively, the whole Algorithm \ref{reconstruct} converges. More precisely, $\T^{M_{n_i}}$ converges for the Hausdorff distance toward a $\R$-tree $\T$ that is constructed from Algorithm \ref{reconstruct} with entry $M$ and where for $1\leq m<N$, $W_m^M$, 
is replaced by $W^\infty_m$.  Furthermore, for every $1\leq m\leq N$, $(\star^{M_{n_i}}_m )_{i\in \N}$ converges toward $\star^\infty_n$ which is also obtained from the same algorithm.

Then it is easy to check that the leaves of $\T$ are $(\star^\infty_m )_{1\leq m\leq N}$, and that for every $1\leq i,j\leq N$,
\[ d(\star_i^\infty, \star^\infty_j)=\lim_{m\to \infty} d(\star^{n_m}_i,\star_j^{n_m})=\lim_{m\to \infty } d_{n_m}(\star^n_i,\star^n_j)=d_{i,j}. \]
Therefore $(\T,d)$ satisfies the properties described in the lemma. 

Finally, let us prove the convergence. First, the right-hand side of \eqref{1h/6/12} is compact so $(\T^{M_n})_{n\in \N}$ is a tight sequence for the Hausdorff topology. Then from any converging subsequence of $(\T^{M_n})_{n\in \N}$ we may further extract such that $(W^{M_{n_i}}_m )_{i\in \N}$ converges. It then follows from the first part of the proof that $(T^{M_n} )_{n\in \N}$ converges for the Hausdorff distance toward $\T$. Finally by Lemma \ref{Lemme de reconstruction} for every $n\in \N$, $(\T^{M_n},d_\infty,(\star^M_i)_{1\leq i \leq N})$ and $(\T_n,d_n,(\star^n_i)_{1\leq i \leq N})$ are isometric. The desired convergence follows.
%
%
%
\end{proof}
\subsection{$\square_c$ is a continuous function of the matrix distance}
Recall Section \ref{TkDef}. Let us extend $\square_i$ to general $\R$-trees. Note that for every $\R$-tree $(\T,d)$, one may define a Borel measure $\lambda$ on $\T$ such that for every $a,b\in \T$, 
$\lambda \llbracket a, b\rrbracket =d(a,b)$. By analogy with $\R$ we call $\lambda$ the Lebesgue measure. For $c\in \N$, if $\{\star_i\}_{1\leq i \leq 2c}$ are leaves of $\T$, we let $\cyc_c(\T)$ be the set of all $x\in \R$ such that $\GT_{(\star_i)_{1\leq i \leq 2c}}(\T)\backslash\{x\}$ is connected. By Lemma \ref{YOLLOOO} below $\cyc_c(\T)$ is measurable. Let $\square_c(\T)$ be its Lebesgue measure.

 It is easy to check that this definition of $\square_c$ extends the definition of $\square_c$ described in Section \ref{TkDef} and informally equals $\square_c+c$ where $\square_c$ is defined in the discrete setting in Section \ref{DkDef}. The goal of this section is to prove a continuity result for $\square_c$.
\begin{lemma} \label{YOLLOOO} For every $c\in \N$, for every $\R$-tree $(\T,d)$, if $\{\star_i\}_{1\leq i \leq 2c}$ are  leaves of $\T$ then 
\[ {\cyc}_c(\T)=\bigcup_{1\leq b\leq c} \llbracket \star_{2b-1},\star_{2b}\rrbracket . \]
\end{lemma}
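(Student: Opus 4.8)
Write $G:=\GT_{(\star_i)_{1\le i\le 2c}}(\T)$ for the glued (pseudo‑)metric space and let $\pi$ be the canonical map from the underlying set of $\T$ onto that of $G$; put $Q:=\bigcup_{b=1}^{c}\llbracket\star_{2b-1},\star_{2b}\rrbracket\subseteq\T$. Since $\GT$ identifies exactly the pairs $\{\star_{2b-1},\star_{2b}\}$, $1\le b\le c$, and nothing else, and since the $\star_i$ are leaves, $\pi$ is injective away from $\{\star_{2b}:1\le b\le c\}$. The plan is to prove the two inclusions $Q\subseteq{\cyc}_c(\T)$ and ${\cyc}_c(\T)\subseteq Q$ separately, using throughout two standard features of $\R$‑trees: between any two points there is a unique arc, the geodesic $\llbracket\cdot,\cdot\rrbracket$; and a subset of an $\R$‑tree is connected if and only if it is geodesically convex. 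In particular, for $x\in\T$ the connected components of $\T\setminus\{x\}$ (the directions at $x$) are open and convex, and two points of $\T\setminus\{x\}$ lie in a common direction if and only if the geodesic joining them avoids $x$.

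For ${\cyc}_c(\T)\subseteq Q$, take $x\notin Q$. Then $x$ is none of the $\star_i$, so $\pi^{-1}(\pi(x))=\{x\}$, and for every $b$ the two points $\star_{2b-1},\star_{2b}$ lie in a common direction at $x$ (because $x\notin\llbracket\star_{2b-1},\star_{2b}\rrbracket$); hence no identification of $\GT$ joins two distinct directions at $x$, so $G\setminus\{x\}$ is the disjoint union of the (relatively open) $\pi$‑images of the directions at $x$ — a disconnection, once $x$ has at least two directions — and therefore $x\notin{\cyc}_c(\T)$. For $Q\subseteq{\cyc}_c(\T)$, fix $x\in\llbracket\star_{2b_0-1},\star_{2b_0}\rrbracket$ and let $a',b'\in G\setminus\{x\}$; lift them to $a,b\in\T\setminus\{x\}$. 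If the geodesic $\llbracket a,b\rrbracket$ of $\T$ avoids $x$, then $\pi(\llbracket a,b\rrbracket)$ joins $a'$ to $b'$ in $G\setminus\{x\}$. Otherwise $a$ and $b$ lie in distinct directions at $x$; since $x$ also separates $\star_{2b_0-1}$ from $\star_{2b_0}$, one reconnects $a$ to $b$ through the identified point $\pi(\star_{2b_0-1})=\pi(\star_{2b_0})$ by concatenating the $\pi$‑images of $\llbracket a,\star_{2b_0-1}\rrbracket$ and $\llbracket\star_{2b_0},b\rrbracket$ — after first moving $a$ and $b$, within their respective directions, toward $\star_{2b_0-1}$ and $\star_{2b_0}$ — each geodesic piece used staying inside a single direction at $x$, so the resulting image arc avoids $x$; thus $G\setminus\{x\}$ is connected.

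The step I expect to be the main obstacle is the ``routing'' invoked in the second inclusion: it requires that \emph{every} direction at a point $x\in Q$ can be linked, across one of the $c$ identifications, to the piece of $G$ that survives the deletion of $x$, and dually that deleting a point $x\notin Q$ really strands a whole direction (a leaf of $\T$ distinct from the $\star_i$, being trivially non‑separating, shows that the inclusion ${\cyc}_c(\T)\subseteq Q$ must exploit that the $\star_i$ account for the leaves that matter here). Making this precise comes down to a combinatorial fact about, for fixed $x$, the finite incidence graph whose vertices are the directions at $x$ and whose edges are the pairs $\{\star_{2b-1},\star_{2b}\}$ with $x\in\llbracket\star_{2b-1},\star_{2b}\rrbracket$: one must show this graph is connected exactly on $Q$, and this is where the hypothesis that the $\star_i$ are leaves, together with the convexity of connected subsets of $\R$‑trees (which forces the geodesics between marked leaves to organise $\T$ around $Q$ as claimed), is used. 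Granting that fact, the two inclusions close the proof, and the measurability of ${\cyc}_c(\T)$ asserted in the surrounding text is then immediate, $Q$ being a finite union of geodesic segments.
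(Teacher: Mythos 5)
Your overall route is the same as the paper's: prove the two inclusions by decomposing $\T\setminus\{x\}$ into directions and using that each identification only joins the two points of a designated pair (for $x\notin Q$ this is exactly the paper's induction showing $\GT_{(\star_i)}(\T)\setminus\{x\}$ stays disconnected). The genuine gap is in the inclusion $Q\subseteq\cyc_c(\T)$: the routing step you single out is not proved but ``granted'', and the combinatorial fact you defer to --- that the incidence graph of directions at $x$ is connected exactly when $x\in Q$ --- is false as stated. Take $\T$ a tripod with leaves $\star_1,\star_2,L$, $c=1$, and let $x$ be the branch point: then $x\in\llbracket\star_1,\star_2\rrbracket$, but the direction toward $L$ is an isolated vertex of your incidence graph, and $\GT_{(\star_1,\star_2)}(\T)\setminus\{x\}$ really is disconnected, since the pendant piece $(x,L]$ is attached to the rest only through $x$. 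If $a$ lies in such a pendant direction, the geodesic $\llbracket a,\star_{2b_0-1}\rrbracket$ passes through $x$ and no amount of ``moving $a$ within its direction'' avoids this; the step cannot be repaired. So under the literal point-removal definition of $\cyc_c$, the inclusion $Q\subseteq\cyc_c(\T)$ fails at branch points of $\T$ on $Q$ carrying such pendant directions, and --- as you yourself observed --- the reverse inclusion fails at leaves of $\T$ off $Q$, whose removal never disconnects anything.

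To be fair, the paper's own proof is no more careful at precisely these points: its first inclusion is the one-line assertion that $\llbracket\star_{2b-1},\star_{2b}\rrbracket$ maps to a cycle (ignoring pendant directions attached at branch points of that segment), and in the leaf case it concludes $x\notin\cyc_c(\T)$ from the connectedness of $\GT_{(\star_i)}(\T)\setminus\{x\}$, which contradicts the stated definition. The equality is really meant, and only ever used, up to sets that the length measure does not charge: in the situations where the lemma is invoked (rescaled discrete trees, ICRTs, the proof of Lemma \ref{CHIANTa}) the branch points are countable and the leaves carry no length, so only $\square_c(\T)=\lambda\bigl(\bigcup_{b}\llbracket\star_{2b-1},\star_{2b}\rrbracket\bigr)$ matters and is unaffected. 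A clean repair of your argument is to prove the equivalence only for $x$ that is neither a leaf nor a branch point of $\T$: then $\T\setminus\{x\}$ has exactly two directions, and when $x\in\llbracket\star_{2b_0-1},\star_{2b_0}\rrbracket$ one direction contains $\star_{2b_0-1}$ and the other $\star_{2b_0}$, so your routing closes immediately; this weaker statement suffices for everything downstream. As submitted, however, the crux of one inclusion is deferred to a claim that is false in the stated generality, so the proof is incomplete.
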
 
\begin{proof} On the one hand, for every $b\leq c$, $\llbracket \star_{2b-1},\star_{2b}\rrbracket \subset \cyc(\G_{(\star_{i})_{1\leq i \leq 2c}}(T))$ since $\llbracket \star_{2b-1},\star_{2b}\rrbracket$ is a cycle in $\GT_{(y_i)_{1\leq i \leq 2c}}(\T)$ (a geodesic path that have the same starting and ending point).

On the other hand, let $x\in \T \backslash \bigcup_{1\leq b\leq c} \llbracket \star_{2b-1},\star_{2b}\rrbracket$. If $\T\backslash \{x\}$ is connected then $x\notin {\cyc}_c(T)$ since $\GT_{(y_i)_{1\leq i \leq 2c}}(\T)\backslash\{x\}$ is also connected. Otherwise $\T\backslash \{x\}$ is disconnected. Let $\T_1,\T_2$ be the two connected components of $\T\backslash \{x\}$. For every $1\leq b \leq 2c$ note that since $x\notin \llbracket \star_{2b-1},\star_{2b}\rrbracket $, either $\star_{2b-1},\star_{2b}\in \T_1$ or $\star_{2b-1},\star_{2b}\in \T_2$. Therefore, by induction, for every $1\leq b \leq c$, $\GT_{(y_i)_{1\leq i \leq 2b}}(\T)\backslash\{x\}$ is still disconnected. In other words, $ x\notin {\cyc}_c(T)$.
\end{proof}
\begin{lemma} \label{CHIANTa} Let $c\in \N$. There exists a continuous function $f_c:\R^{2c\times 2c}$, such that for every $\R$-tree $(\T,d)$ such that $\{\star_i\}_{1\leq i \leq 2c}$ are leaves of $\T$,
\[ \square_c((\T,d))=f_c((d(\star_i,\star_j))_{1\leq i,j \leq 2c}). \]
Furthermore for every $\lambda\in \R^+$, $\square_c((\T,\lambda d))=\lambda \square_c((\T,d))$.
\end{lemma}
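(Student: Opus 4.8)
The plan is to write $\square_c$ explicitly as a piecewise-linear function of the matrix $M:=(d(\star_i,\star_j))_{1\le i,j\le 2c}$ of pairwise distances, by combining Lemma \ref{YOLLOOO} with the inclusion--exclusion principle; continuity and the homogeneity will then be immediate. So there are two things to produce: a continuous $f_c\colon\R^{2c\times2c}\to\R$ with $\square_c((\T,d))=f_c(M)$ for every $\R$-tree with $\{\star_i\}_{1\le i\le 2c}$ among its leaves, and the scaling identity.

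Fix such a $(\T,d)$ and let $R$ be the subtree spanned by $\{\star_i\}_{1\le i\le 2c}$, which is a finite (hence compact, finite-total-length) $\R$-tree containing $\cyc_c(\T)$. By Lemma \ref{YOLLOOO}, $\square_c((\T,d))$ equals the length measure of $\cyc_c(\T)=\bigcup_{b=1}^{c}\llbracket\star_{2b-1},\star_{2b}\rrbracket$, a finite union of geodesic segments of $R$, so inclusion--exclusion applies:
\[
\square_c((\T,d))=\sum_{\emptyset\ne B\subseteq\{1,\dots,c\}}(-1)^{|B|+1}a_B,\qquad a_B:=\text{length of }\bigcap_{b\in B}\llbracket\star_{2b-1},\star_{2b}\rrbracket .
\]
The key step is to compute $a_B$ from $M$. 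Fix $B$, put $b_0:=\min B$ and $P_0:=\llbracket\star_{2b_0-1},\star_{2b_0}\rrbracket$, parametrised by arc length on $[0,d(\star_{2b_0-1},\star_{2b_0})]$ with origin $\star_{2b_0-1}$. Since $b_0\in B$ the set above lies in $P_0$, hence equals $\bigcap_{b\in B}\big(P_0\cap\llbracket\star_{2b-1},\star_{2b}\rrbracket\big)$. In a $\R$-tree the intersection of two geodesics is geodesically convex, hence a sub-arc of $P_0$; its endpoints are the coalescence points of $\{\star_{2b_0-1},\star_{2b_0},\star_{2b-1}\}$ and of $\{\star_{2b_0-1},\star_{2b_0},\star_{2b}\}$ (notation of Lemma \ref{ZOURF}), whose arc-length coordinates on $P_0$ are, by Lemma \ref{ZOURF},
\[
u_b:=\tfrac12\big(d(\star_{2b_0-1},\star_{2b_0})+d(\star_{2b_0-1},\star_{2b-1})-d(\star_{2b_0},\star_{2b-1})\big),
\]
\[
v_b:=\tfrac12\big(d(\star_{2b_0-1},\star_{2b_0})+d(\star_{2b_0-1},\star_{2b})-d(\star_{2b_0},\star_{2b})\big).
\]
Thus $P_0\cap\llbracket\star_{2b-1},\star_{2b}\rrbracket$ agrees, up to a set of length zero, with the interval $[\min(u_b,v_b),\max(u_b,v_b)]$ (and for $b=b_0$ this is all of $[0,d(\star_{2b_0-1},\star_{2b_0})]$), whence
\[
a_B=\max\Big(0,\ \min_{b\in B}\max(u_b,v_b)-\max_{b\in B}\min(u_b,v_b)\Big),
\]
a function of $M$ built from its entries using only $\tfrac12$, sums, differences, $\min$ and $\max$. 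Setting $f_c:=\sum_{\emptyset\ne B\subseteq\{1,\dots,c\}}(-1)^{|B|+1}a_B$ as a function on all of $\R^{2c\times2c}$ then gives a continuous function with $\square_c((\T,d))=f_c(M)$, and $f_c(\lambda M)=\lambda f_c(M)$ for $\lambda\ge 0$ is the last assertion (one sees it directly too: $(\T,\lambda d)$ has the same underlying set and the same geodesics, with length measure $\lambda$ times that of $(\T,d)$).

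The step I expect to require care is the bookkeeping for degenerate configurations, where $P_0\cap\llbracket\star_{2b-1},\star_{2b}\rrbracket$ is empty or a single point: there one must check that the two coalescence points above coincide (an empty or one-point intersection of two geodesics in a $\R$-tree forces $u_b=v_b$), so the interval degenerates to a point and contributes nothing to any length, and that $a_B$ as written therefore still records the correct value of the Lebesgue measure in all such cases. As an alternative to the explicit formula, once $f_c$ is seen to be well defined on the set of realisable matrices via the reconstruction Lemma \ref{Lemme de reconstruction}, continuity could instead be deduced from Lemma \ref{reconstructTHM} together with the continuity of $[\T,(\star_i)_{1\le i\le 2c}]\mapsto\square_c((\T,d))$ for the $2c$-pointed Gromov--Hausdorff topology; but the explicit formula has the advantage of giving a genuinely globally defined continuous $f_c$ at once.
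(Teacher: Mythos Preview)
Your proof is correct and follows essentially the same route as the paper: both invoke Lemma \ref{YOLLOOO} to reduce $\square_c$ to the length of a union of geodesic segments, then parametrise by arc length along a fixed segment and use Lemma \ref{ZOURF} to locate each pairwise intersection as an explicit interval with endpoints that are linear in the entries of $M$. The only organisational difference is that the paper telescopes, writing $\square_c-\square_{c-1}$ as the integral of $\prod_{b<c}\1_{x\notin I_b}$ over $[0,d_{2c-1,2c}]$ and arguing inductively, whereas you apply inclusion--exclusion over all nonempty $B\subseteq\{1,\dots,c\}$ at once; your version yields a closed piecewise-linear formula, at the cost of handling $2^c-1$ intersections rather than $c-1$ per step, but the underlying computation of each $I_b$ (resp.\ $[\min(u_b,v_b),\max(u_b,v_b)]$) via coalescence points is identical. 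Your handling of the degenerate case (empty or one-point intersection forces $u_b=v_b$, so the corresponding interval is a singleton and contributes zero length) is exactly the same observation the paper makes in its final paragraph.
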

\begin{proof} Fix $c\in \N$. note that $\square_c$ is invariant under isometry so Lemma \ref{Lemme de reconstruction} imply that $f_c$ exists.   Also, the scaling property is straightforward from the initial definition since rescaling $d$ rescale the Lebesgue measure. Thus, it remains to prove the continuity property.

To this end, we prove an explicit formula for $\square_c((\T,d))-\square_{c-1}((\T,d))$ using Lemma \ref{YOLLOOO}. 
Let $M:=(d_{i,j})_{1\leq i ,j\leq 2c}:= (d(\star_i,\star_j))_{1\leq i,j \leq 2c}$. Since $(\T,d)$ is a $\R$-tree we may define $\varphi_{c}$ as the unique isometry from $[0,d_{{2i-1},2i}]$ to $\llbracket \star_{2i-1}, \star_{2i}\rrbracket$ such that $\varphi_{c}(0)=\star_{2i-1}$ and $\varphi_{c}(d_{{2i-1},2i})=\star_{2i}$. We have by the transport formula,
\begin{align} \square_c((\T,d))-\square_{c-1}((\T,d)) & = \lambda \bigg (\llbracket \star_{2i-1}, \star_{2i}\rrbracket \backslash \bigcup_{1\leq b<c} \Ee_{\llbracket \star_{2b-1},\star_{2b} \rrbracket} \bigg ) \notag
\\ & =\int_0^{d_{2i-1,2i}} \prod_{b=1}^{c-1} \1_{\varphi_c(x)\notin {\llbracket \star_{2b-1},\star_{2b} \rrbracket}} dx. \label{TapaTapaTapapapapa}
\end{align}

Then, since $\T$ is a $\R$-tree, for every $1\leq b <c$, $\llbracket \star_{2b-1},\star_{2b} \rrbracket\cap \llbracket \star_{2c-1},\star_{2c} \rrbracket$ is a  segment. For every $1\leq b <c$, let $I_b$ be the real interval such that $x\in  I_b$ iff $\varphi_c(x)\in \llbracket \star_{2b-1},\star_{2b} \rrbracket$. Intuitively, by \eqref{TapaTapaTapapapapa} it is enough to show that for $1\leq b<c$, $I_b$ may be seen as a continuous function of $M$. Indeed, this would directly imply that $f_c(M)-f_{c-1}((d_{i,j})_{1\leq i ,j\leq 2c-2})$ is continuous. And the desired result would then follow by induction.

Thus let us fix $1\leq b <c$, and let us compute $I_b$. For every $a,b,c\in \T$,  $\star_{a,b,c}$ let be the unique vertex in $\llbracket \star_a,\star_b \rrbracket \cap \llbracket \star_a,\star_c \rrbracket\cap\llbracket \star_b,\star_c \rrbracket$. Since $\T$ is a $\R$-tree, note that 
\[ d(\star_{2c-1},\star_{2c-1,2c, 2b-1})\neq d(\star_{2c-1},\star_{2c-1,2c, 2b}) \Longrightarrow I_b= [d(\star_{2c-1},\star_{2c-1,2c, 2b-1}),d(\star_{2c-1},\star_{2c-1,2c, 2b})]^+, \]
where for $x,y\in \R$, $[x,y]^+:=[\min(x,y),\max(x,y)]$. Also note that
\[ d(\star_{2c-1},\star_{2c-1,2c, 2b-1})\neq d(\star_{2c-1},\star_{2c-1,2c, 2b})\Longrightarrow I_b\in \{\emptyset,\{d(\star_{2c-1},\star_{2c-1,2c, 2b})\}\}.\]
 Moreover, by Lemma \ref{ZOURF},
\[d(\star_{2c-1},\star_{2c-1,2c, 2b-1})=d_{2c-1,2c}+d_{2c-1,2b-1}-d_{2c,2b-1}, \]
and 
\[d(\star_{2c-1},\star_{2c-1,2c, 2b})=d_{2c-1,2c}+d_{2c-1,2b}-d_{2c,2b}. \]
Therefore $I_b$ may be seen as a continuous function of $M$. Finally \eqref{TapaTapaTapapapapa} concludes the proof.
\end{proof}
\end{document}